\documentclass[fms]{cuparticle}
\usepackage{graphicx}
\usepackage{amsmath,amssymb,amsthm,stmaryrd}
\usepackage[all]{xy}
\usepackage{hyperref}
\xyoption{arc}

\numberwithin{equation}{section}
\numberwithin{table}{section}
\numberwithin{figure}{section}

\bibliographystyle{amsalpha}
\parskip 0.7pc
\parindent 0pt

\newtheorem{thm}{Theorem}[section]
\newtheorem{cor}[thm]{Corollary}
\newtheorem{prop}[thm]{Proposition}

\theoremstyle{definition}
\newtheorem{defn}[thm]{Definition}
\theoremstyle{remark}
\newtheorem{rmk}[thm]{Remark}

\def\co{\colon\thinspace}
\newcommand{\mb}[1]{\mathbb{#1}}

\newcommand{\Hom}{\ensuremath{{\rm Hom}}}

\newcommand{\overto}{\mathop\rightarrow}

\newcommand{\longoverto}{\mathop{\longrightarrow}}

\newcommand{\GL}{{\rm GL}}

\newcommand{\TAF}{{\rm TAF}}

\newcommand{\Spec}{{\rm Spec}}

\newcommand{\TMF}{{\rm TMF}}

\newcommand{\Tr}{{\rm Tr}}
\newcommand{\Nm}{{\rm N}}

\newcommand{\comp}[1]{\ensuremath{#1^\wedge}}

\newcommand{\pow}[1]{\left\llbracket{#1}\right\rrbracket}

\volume{??}
\doi{??}

\begin{document}
\authorheadline{T. Lawson}
\runningtitle{Shimura curve of disc. 15 and topological automorphic forms}

\begin{frontmatter}

\title{The Shimura curve of discriminant 15 and topological
  automorphic forms}

\author[a]{Tyler Lawson}
\ead{tlawson@math.umn.edu}

\address[a]{Department of Mathematics, University of Minnesota, 206 Church Street SE, Minneapolis, MN, USA 55414}

\received{??}
\accepted{??}

\begin{abstract}
  We find defining equations for the Shimura curve of discriminant
  $15$ over $\mb Z[1/15]$.  We then determine the graded ring of
  automorphic forms over the $2$-adic integers, as well as the higher
  cohomology.  We apply this to calculate the homotopy groups of a
  spectrum of ``topological automorphic forms'' associated to this
  curve, as well as one associated to a quotient by an Atkin-Lehner
  involution.
\end{abstract}
\MSC{55P42 (primary); 11F23, 11G18, 14G35, 55P43 (secondary)}

\end{frontmatter}

\section{Introduction}

A generalized cohomology theory $E$ associates to each space $X$ a
sequence of abelian groups $E^n(X)$, often equipped with extra
structure such as a graded multiplication. These are required to
satisfy the Eilenberg-Steenrod axioms, and this alone implies that
there is a natural Atiyah-Hirzebruch spectral sequence
\[
H^s(X; E^t(\ast)) \Rightarrow E^{s+t}(X)
\]
with reasonable convergence properties when $X$ has the homotopy
type of a CW-complex.

In a coarse sense, this tells us that $E^*(X)$ combines the
cohomological data of $X$ and the $E$-cohomology of a point in some
way. However, the spectral sequence is the first step in an iterative
calculation, and much deeper information is required to complete it:
in particular, we need to find differentials and solve hidden
extensions in this spectral sequence. In a local sense these
depend on $X$: if $X$ is a CW-complex, this information is determined
by $X$ and the attaching maps for the cells of $X$. In a global
sense these depend on $E$: this information is determined by an
intricate web of connective tissue between the groups $E^t(\ast)$,
expressible in terms of cohomology operations, secondary operations,
and so on.

It has been an active research topic to determine all the data
necessary to take a graded abelian group, or a graded ring, and lift
it to a generalized cohomology theory. Quillen showed that for many
multiplicative cohomology theories there is a theory of Chern classes,
and the formula for the first Chern class of a tensor product of line
bundles gives $E^*$ a natural formal group law
\cite{quillen-fgl}. Landweber showed that the converse holds in many
circumstances: given a graded-commutative ring with an appropriate
type of graded formal group law, one can {\em realize} it by a
generalized cohomology theory \cite{landweber-exact}.

More recently, Lurie announced a very strongly functorial result of
this converse type, generalizing work of Hopkins, Miller, and
others. It requires more input: instead of just a formal group law,
Lurie's result requires an extension of it to a $p$-divisible group
satisfying a version of the Serre-Tate property
\cite{goerss-landweber-families}. This has been exploited to construct
generalized cohomology theories attached to certain moduli of abelian
varieties, under the general header of topological automorphic forms
\cite{taf}. Lurie's theorem takes a scheme (or stack) with such a
$p$-divisible group and equips its \'etale site with a sheaf of
$E_\infty$ ring spectra (the homotopy-theoretic analogue of a sheaf of
commutative differential graded algebras).

This gives us an abundance of new objects in homotopy theory, with each
described by purely algebro-geometric data. To bring ourselves back
down to earth, we must understand the consequences of what we have
done.  The data itself determines many cohomology theories $E$ and
their coefficient rings; about the connective tissue it gives not much
direct information.

Shimura curves have provided an interesting test case. These
parametrize $2$-dimensional abelian varieties with an action of a
quaternion algebra, and share many formal similarities with the moduli
of elliptic curves. They have been harder to cohomologically analyze
than the moduli of elliptic curves, but in many cases their images in
homotopy theory have provided similar answers. In \cite{shimc}, the
resulting cohomology theories were analyzed for the Shimura curves of
discriminants $6$, $10$, and $14$. The most mysterious aspect appeared
with the curve of discriminant $10$. This curve has a very different
geometry than the moduli of elliptic curves, but after $3$-adic
completion the Shimura curve and the modular curve have associated
cohomology theories that behave in an identical fashion so far as
investigations have revealed.

The program of this paper is to study the Shimura curve of
discriminant $15$ and its relation to homotopy theory.
The prime $2$ does not divide $15$, and as a result this is the first
Shimura curve where $2$-primary information in stable homotopy theory
can be extracted. The path we will follow is similar to that in
\cite{shimc}: we describe a moduli object, determine its cohomology,
and find the coefficient ring of the resulting cohomology theory by a
spectral sequence calculation.

This results in new questions. The final calculations in homotopy
theory are very similar to the calculations of Mahowald--Rezk for
topological modular forms with level $\Gamma_0(3)$ structure
\cite{mahowald-rezk-level3}---so similar that Figure~\ref{fig:e3-page}
and Figure~\ref{fig:e7-page} could easily be used as references for
the calculations that appeared in their paper. At the prime $2$, the
coefficient ring for the Shimura curve breaks up as isomorphic to a
direct sum of two pieces: a coefficient ring for topological modular
forms with level $\Gamma_0(3)$ structure, and a certain module over
it. Moreover, the Shimura curve has an action of $\mb Z/2$, and the
quotient corresponds to a cohomology theory with a new coefficient
ring: now an {\em extension} of the coefficient ring for topological
modular forms with level $\Gamma_0(3)$ structure. This, again, occurs
despite the lack of obvious geometric connection between these
moduli. It is not clear if something fundamental is guiding these
connections, or if these are merely coincidences in low degrees.

We will now give a detailed outline, beginning with a more exact
description of the objects under consideration.

There is a 4-dimensional division algebra $D$ over $\mb Q$, generated
by elements $x$ and $y$ satisfying 
\begin{equation}
  \label{eq:divisiongens}
  x^2 = -3, y^2 = 5, xy = -yx.
\end{equation}
This is uniquely characterized by the requirement that $D \otimes \mb
Q_p \cong M_2(\mb Q_p)$ precisely for primes $p \neq 3,5$ (in other
words, it has discriminant $15$).  In particular, there is an isomorphism
\[
\tau\co D \otimes \mb R \cong M_2(\mb R).
\]
On $D$ there is a reduced norm $N(a + bx + cy + dxy) = a^2 + 3b^2 -
5c^2 -15d^2$ which is multiplicative, and under $\tau$ it corresponds
to the determinant.

Within $D$ there is also a subring
\begin{equation}
  \label{eq:ordergens}
\Lambda \cong \mb Z\langle \omega,y\rangle / 
(\omega^2 + \omega + 1, y\omega = \omega^2 y),
\end{equation}
generated by $y$ and $\omega = \frac{-1+x}{2}$.  The ring $\Lambda$ is
maximal among finitely generated submodules closed under the
multiplication (a maximal order), and any other such subring is
conjugate to $\Lambda$ \cite{eichler-idealclasses}.  We
obtain an embedding of the norm-1 subgroup:
\[
\tau\co \Lambda^{N=1} \to SL_2(\mb R)
\]
This gives the action of the norm-1 elements of $\Lambda$ on the
complex upper half-plane ${\cal H}$.  The quotient orbifold ${\cal H}
/ \Lambda^{N=1} = {\cal X}^D_{\mb C}$ is called the complex Shimura
curve of discriminant $15$, and it is a parametrizing object for
$2$-dimensional complex abelian varieties $A$ equipped with an action
of $\Lambda$ (sometimes called ``fake elliptic curves'').  The
orbifold structure reflects the fact that such objects $A$ often
possess automorphisms.

The object ${\cal X}^D_{\mb C}$, through this interpretation as a
parametrizing object, has an algebraic lift.  Over $\mb Z[1/15]$,
there is a {\em stack} ${\cal X}^D$ such that maps $S \to {\cal X}^D$
parametrize 2-dimensional abelian schemes $A/S$ with an action of
$\Lambda$.  There is an underlying coarse moduli scheme $X^D$ which is
a smooth curve over $\mb Z[1/15]$ \cite{morita-shimuracurvereduction}.
The first goal of this paper is purely algebraic: it is to determine
defining equations for $X^D$.  As in \cite{shimc}, this builds on
previous work of Kurihara~\cite{kurihara-equations} and
Elkies~\cite{elkies-computations}.

The second goal concerns automorphic forms, and it requires us to
extend from $\mb Z[1/15]$ to the ring $\mb Z_2$ (though statements
could be made over general rings $R$ such that $\Lambda \otimes R
\cong M_2(R)$).  Letting ${\cal A}/{\cal X}^D$ be the universal
abelian scheme, the action of $\Lambda \otimes \mb Z_2 \cong M_2(\mb
Z_2)$ on the $2$-dimensional relative cotangent space of ${\cal
  A}_{\mb Z_2}$ at the identity splits it into two isomorphic
1-dimensional summands.  This summand is a line bundle $\omega$ on
${\cal X}^D_{\mb Z_2}$, and the sections of $\omega^{\otimes t}$ are
automorphic forms of weight $t$ on ${\cal X}^D_{\mb Z_2}$.  The second
goal of this paper is also algebraic: it is to determine the
cohomology groups $H^s({\cal X}^D_{\mb Z_2}; \omega^{\otimes t})$.  In
particular, when $s=0$ this is a graded ring of automorphic forms over
$\mb Z_2$.

We next $2$-adically complete and study $\comp{({\cal X}^D)}_2$, a
formal parameter object living over $\mb Z_2$.  In this case, the
actions of $\Lambda$ on the $2$-dimensional formal group
$\widehat{\cal A}$ and the $2$-dimensional $2$-divisible group ${\cal
  A}[2^\infty]$ factor through $\Lambda \otimes \mb Z_2$.  This splits
the $2$-divisible group into two isomorphic $1$-dimensional summands,
and similarly for the formal group.  The aforementioned theorem of
Lurie then lifts this formal group data to a derived structure sheaf
${\cal O}^{der}$ of $E_\infty$ ring spectra (see
\cite[Section~2.6]{shimc}), and the homotopy groups are determined by
a cohomology spectral sequence
\[
H^s(U; \omega^{\otimes t}) \rightarrow \pi_{2t-s} \Gamma(U,{\cal
  O}^{der}).
\]
In particular, we can define $\TAF^D = \Gamma(\comp{({\cal X}^D)}_2,
{\cal O}^{der})$ as a global section object.  The third goal of this
paper is to determine the homotopy groups of this spectrum.  As
stated, the calculations are similar to those of Mahowald-Rezk.  The
main difference is that an extra summand occurs in the computation
(Figure~\ref{fig:e3-page-a6} and Figure~\ref{fig:e7-page-a6}), which
carries out a twist of the Mahowald-Rezk calculation.

Finally, there is an Atkin-Lehner involution $w_{15}$ on ${\cal X}^D$,
whose effect is to take an abelian variety with $\Lambda$-action and
``twist'' the action by conjugating with $xy$. Since $-15$ has a
$2$-adic square root, the action of $w_{15}$ lifts to the
$2$-divisible group (see \ref{prop:atkinlehnerlift}) and so we get an
action of $\mb Z/2$ on $\TAF^D$.  The final goal of this paper is to
determine the homotopy groups of this homotopy fixed point spectrum,
which could be viewed as the global section object of an extension of
${\cal O}^{der}$ to the quotient stack ${\cal X}^D/w_{15}$.  This
calculation is, again, similar to the Mahowald-Rezk calculation, but
this time it adds an ideal carrying out two copies of the homotopy
fixed point spectral sequence for $KO$.

The exact connection of $\TMF_0(3)$ with $\TAF^D$ on the spectrum
level remains unclear.  Both map to a homotopy fixed point spectrum
$EO(G)$ associated to a finite subgroup $G$ of the extended Morava
stabilizer group, but this alone does not support the degree of
connection that is visible in homotopy theory.

\acks

The author would like to issue thanks to John Voight for assistance
with the fundamental domain, as well as general thanks towards Michael
Hill.  The author was partially supported by NSF grant 1206008 and a
fellowship from the Sloan foundation.

\section{Complex uniformization}

The Shimura curve $X^D$ of discriminant $15$ is of genus $1$, and it
has two elliptic points of order $3$.

In order to determine a complex uniformization of the curve, we first
fix an embedding $\tau$ of the division algebra $D$ into $M_2(\mb
R)$.  This differs from that
in~\cite[5.5]{alsina-bayer-shimuracurves}, and the resulting
fundamental domain differs from that in Figure 5.3 of loc. cit., which
was determined by Michon~\cite{michon-shimuracurves} and pictured
in~\cite[IV.\S 3.C]{vigneras-quaternions}.

Let $\tau\co D \to M_2(\mb R)$ be defined on the generators of
equation~(\ref{eq:divisiongens}) by
\begin{equation}
  \label{eq:embedding}
x \mapsto
\begin{bmatrix}
0 & \sqrt 3 \\
-\sqrt{3} & 0
\end{bmatrix},\ 
y \mapsto
\begin{bmatrix}
0 & \sqrt{5} \\
\sqrt{5} & 0
\end{bmatrix},\ 
xy \mapsto
\begin{bmatrix}
\sqrt{15} & 0\\
0 & -\sqrt{15}
\end{bmatrix}.
  \end{equation}
This determines the action of $\Lambda^\times$ on $\mb C \setminus \mb
R$, with quotient the complex Shimura curve $X^D_{\mb C}$.  Write
${\cal H}$ for the upper half-plane.

We have norm-1 elements in $\Lambda$:
\begin{align}
  \label{eq:generators}
\omega &= \frac{-1 + x}{2}, &
h &= 4 + xy, &
\gamma &= 4 + 5\omega^2 - 2y
\end{align}
The first rotates by $\frac{2\pi}{3}$ around an
elliptic point $i \in {\cal H}$, while the element $4 + xy$ is a
``principal homothety'' $h\co z \mapsto (4 + \sqrt{15})^2 z$.

We also define elements
\begin{equation}
  \label{eq:atkinleher}
 \tilde w_3 = x,\ \tilde w_5 = 5 + 2y,\ \tilde w_{15} = 5x + 2xy.
\end{equation}
These elements have norms $3$, $5$, and $15$ respectively.  They thus
act on the upper half-plane quotient as lifts of the Atkin-Lehner
involutions $w_3$, $w_5$, and $w_{15}$ respectively.  The element
$\tilde w_5$ is a hyperbolic translation stabilizing the circle of
radius $1$, while $\tilde w_3$ is an involution about this circle that
fixes $i$.

We note that in $\Lambda$, there is an identity
\[
5h = (\omega^2 \tilde w_5 \omega^2 + 5)y.
\]
Together with the fact that $y$ commutes with $\tilde w_5$ and
conjugate-commutes with $\omega$, this shows
\begin{equation}
  \label{eq:commuting}
(\omega^2 \tilde w_5 \omega^2) h = h (\omega \tilde w_5 \omega).
\end{equation}

\begin{prop}
There exists a hyperbolic hexagon (Figure~\ref{fig:fundamentaldomain})
which is a fundamental domain for the action of $\Lambda^{N=1}/\{\pm
1\}$ on ${\cal H}$.  It has the following six elliptic points as
vertices:
\begin{align}
  \label{eq:ellipticvertices}
v_1 &= h \omega \tilde w_5 i&
v_2 &= h i&
v_3 &= h \omega^2 \tilde w_5^{-1} i\notag\\
v_6 &= \omega^2 \tilde w_5 i&
v_5 &= i&
v_4 &= \omega \tilde w_5^{-1} i
\end{align}
This domain is symmetric about the imaginary axis, and the edges are
identified via
\begin{align*}
h\omega^2(\overrightarrow{v_5 v_6}) &= \overrightarrow{v_2 v_1},\\
h\omega(\overrightarrow{v_5 v_4}) &= \overrightarrow{v_2 v_3},\\
\gamma(\overrightarrow{v_4 v_3}) &= \overrightarrow{v_6 v_1}.
\end{align*}
\end{prop}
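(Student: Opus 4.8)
The plan is to exhibit the hexagon explicitly and then verify three things: that the six listed points are genuine elliptic points, that the three displayed isometries do identify the edges as claimed, and that the resulting quotient has the right orbifold Euler characteristic (so that Poincar\'e's theorem on fundamental polygons applies and no further identifications are needed). First I would compute, using the matrix representation $\tau$ of equation~(\ref{eq:embedding}), the images in $SL_2(\mb R)$ of $\omega$, $h$, $\gamma$, $\tilde w_5$, and the relevant products $h\omega$, $h\omega^2$, $\omega^2\tilde w_5$, etc. Since $i \in {\cal H}$ is fixed by $\omega$ (a rotation by $2\pi/3$) and $\tilde w_3 = x$ fixes $i$ as well, the six points $v_j$ are the orbit of $i$ under a finite set of group elements, hence each is an elliptic point of order $3$; in fact the symmetry $z \mapsto -\bar z$ of the imaginary axis corresponds to conjugation by $\tilde w_3$ (which negates $x$-free coordinates appropriately), giving the claimed reflective symmetry and pairing $v_1 \leftrightarrow v_3$, $v_4 \leftrightarrow v_6$, with $v_2, v_5$ on the axis.

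Next I would check the three edge identifications. For each, the claim is that a specific element $g \in \Lambda^{N=1}$ carries the geodesic segment $\overrightarrow{v_a v_b}$ isometrically onto $\overrightarrow{v_c v_d}$; it suffices to check that $g$ maps the two endpoints correctly, i.e. $g v_a = v_c$ and $g v_b = v_d$ as points of ${\cal H}$, since a M\"obius transformation is determined by its action and geodesics go to geodesics. For instance $h\omega^2 \cdot v_5 = h\omega^2 \cdot i$ and one must recognize this as $v_6 = \omega^2 \tilde w_5 i$; here the relation (\ref{eq:commuting}), $(\omega^2\tilde w_5\omega^2)h = h(\omega\tilde w_5\omega)$, together with $\omega^2 i = i$ and the commuting relations for $y$, is exactly what is needed to rewrite $h\omega^2 i$ in the form $\omega^2 \tilde w_5 i$ (and similarly $h\omega^2 v_6 = h\omega^2\cdot\omega^2\tilde w_5 i$ should equal $v_1 = h\omega\tilde w_5 i$, which again reduces to (\ref{eq:commuting})). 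The other two identifications are handled the same way, with $\gamma = 4 + 5\omega^2 - 2y$ chosen precisely so that $\gamma v_4 = v_6$ and $\gamma v_3 = v_1$; this last is the computation I expect to require the most care, since $\gamma$ does not visibly factor through the elementary generators and one must simply grind out the fractional linear action on $i$ and its translates.

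Finally, I would invoke Poincar\'e's polygon theorem: the edge-pairing transformations generate a discrete subgroup $\Gamma$ of $PSL_2(\mb R)$ with the hexagon as fundamental domain provided the cycle conditions at the vertices hold (the angle sums around each cycle of identified vertices divide $2\pi$ appropriately). The vertex cycles here group the six order-$3$ points; checking that each cycle closes up with total angle $2\pi/3 \cdot (\text{cycle length})$ consistent with a cone point, and that the area of the hexagon equals the hyperbolic area $2\pi(2g-2 + \sum(1-1/e_i)) = 2\pi(0 + 2(1 - 1/3)) = 8\pi/3$ forced by genus $1$ with two order-$3$ points, pins down that $\Gamma = \Lambda^{N=1}/\{\pm1\}$ exactly (both are known to be this arithmetic group, so it is really a consistency check). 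The main obstacle is the vertex-cycle bookkeeping combined with the explicit verification of the $\gamma$-identification; everything else is a direct matrix computation using (\ref{eq:embedding}) and the algebraic relations (\ref{eq:commuting}) already established.
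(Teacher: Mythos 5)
Your proposal follows essentially the same route as the paper: verify that the six points are elliptic, check the three edge identifications on endpoints (using equation~(\ref{eq:commuting}) to move between the $v_1v_2v_3$ and $v_4v_5v_6$ sides), and then compare the hyperbolic area of the hexagon with the known covolume $8\pi/3$ of $\Lambda^{N=1}/\{\pm 1\}$ to conclude it is a fundamental domain. The only difference is packaging --- the paper computes the $\pi/6$ angles at $v_2$ and $v_5$ and passes directly from ``edges identified plus correct volume'' to the conclusion, rather than routing through Poincar\'e's polygon theorem and the vertex-cycle conditions as you do --- but the substantive computations are identical.
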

\begin{figure}[h]
\centerline{\includegraphics[width=15pc]{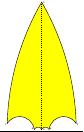}}
\caption{Fundamental domain for ${\cal X}^D_{\mb C}$}
\label{fig:fundamentaldomain}
\end{figure}

\begin{proof}
As elliptic points are preserved by the action of $\Lambda$, the
given vertices are all elliptic points in ${\cal H}$.  We note that
the operator $\omega^2 \tilde w_5 \omega^2$ takes $v_4$ to $v_5$ and
$v_5$ to $v_6$; equation~(\ref{eq:commuting}) implies that it also
takes $v_3$ to $v_2$ and $v_2$ to $v_1$.

We will now show that $v_1, \ldots, v_6$ are the vertices, indexed in
clockwise order, of a fundamental domain for the action of
$\Lambda^{N=1}$.  The resulting curve is of genus $1$.

The geodesics $\omega(\overrightarrow{v_5 v_6})$ and
$\omega^2(\overrightarrow{v_5 v_4})$ are along the unit circle.
Therefore, the original geodesics $\overrightarrow{v_5 v_6}$ and
$\overrightarrow{v_5 v_4}$ make angles of $\frac{\pi}{6}$ with the
imaginary axis, represented by $\overrightarrow{v_5 v_2}$.  Similarly,
$\overrightarrow{v_2 v_1}$ and $\overrightarrow{v_2 v_3}$ also make
angles of $\frac{\pi}{6}$ with $\overrightarrow{v_5 v_2}$.  The
hyperbolic volume is therefore $8\pi/3$, which is the volume of
$X^D$.  Therefore, once we have shown that the geodesic edges are
identified, this must be a fundamental domain.

The following identifications of geodesics are verified by checking
that they have the correct effect on endpoints.
\begin{itemize}
\item The transformation $a = h \omega^2$ takes $\overrightarrow{v_5 v_6}$
  to $\overrightarrow{v_2 v_1}$.
\item The transformation $b = h \omega$ takes $\overrightarrow{v_5 v_4}$
  to $\overrightarrow{v_2 v_3}$.
\item The transformation $\gamma = 4 + 5\omega^2 - 2y$, which is also realized
  by the element
\[
(\omega^2 \tilde w_5 \omega^2)^2 = 5(4 + 5\omega^2 - 2y),
\]
  takes $\overrightarrow{v_4 v_3}$ to $\overrightarrow{v_6 v_1}$.\qedhere
\end{itemize}
\end{proof}

This fundamental domain lets us produce a presentation of the group
$\Lambda^{N=1}/\{\pm 1\}$, which is the fundamental group of the
associated orbifold.

\begin{cor}
\label{cor:presentation}
There is a presentation
\begin{equation}
\Lambda^{N=1}/\{\pm 1\} = \langle h, \gamma, \omega \mid \omega^3 = (\omega^2 h^{-1} \gamma h \omega^2 \gamma^{-1})^3 = 1 \rangle.
\end{equation}
\end{cor}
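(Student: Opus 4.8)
The plan is to apply Poincar\'e's polygon theorem to the fundamental hexagon constructed in the preceding Proposition. All six vertices $v_1,\dots,v_6$ are elliptic points of ${\cal H}$, so the hexagon has no free sides and no cusps, and the theorem then presents $\Lambda^{N=1}/\{\pm 1\}$ with the three side-pairing transformations $h\omega^2$, $h\omega$, and $\gamma$ as generators and with one relator for each vertex cycle, of the form $(\text{cycle transformation})^{m}$, where $m$ is the order of the corresponding elliptic point of $X^D$.

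First I would identify the vertex cycles from the edge identifications of the Proposition. Since $h\omega^2$ sends $v_5\mapsto v_2$ and $v_6\mapsto v_1$; $h\omega$ sends $v_5\mapsto v_2$ and $v_4\mapsto v_3$; and $\gamma$ sends $v_4\mapsto v_6$ and $v_3\mapsto v_1$, the vertices fall into exactly two orbits, $\{v_2,v_5\}$ and $\{v_1,v_3,v_4,v_6\}$. These are the two elliptic points of $X^D$, both of order $3$, so each relator is a cube. (As a consistency check, the angle count in the proof of the Proposition gives interior angle $\pi/3$ at each of $v_2$ and $v_5$, while Gauss--Bonnet gives total angle $4\pi-8\pi/3=4\pi/3$; hence both cycles have cone angle $2\pi/3$, matching order $3$.)

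Next I would read off the two cycle transformations by composing side-pairings around each cycle. For $\{v_2,v_5\}$ the composite of $v_5\to v_2\to v_5$ is $(h\omega^2)^{-1}(h\omega)=\omega^{-1}$, which (up to inversion) is the order-$3$ rotation $\omega$ about $i=v_5$; this yields the first relator $\omega^3$. For $\{v_1,v_3,v_4,v_6\}$ the composite around $v_4\to v_6\to v_1\to v_3\to v_4$ is $(h\omega)^{-1}\gamma^{-1}(h\omega^2)\gamma$, which one normalizes, using $\omega^3=1$ and a conjugation, to the relator $\omega^2 h^{-1}\gamma h\omega^2\gamma^{-1}$ of the statement; this yields the second relator. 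Finally a Tietze transformation swaps the generating set $\{h\omega^2,\,h\omega,\,\gamma\}$ for $\{h,\gamma,\omega\}$: from $\omega=(h\omega)^{-1}(h\omega^2)$, $h=(h\omega^2)\omega^{-2}$, and $\gamma=\gamma$ the two sets generate one another, so the presentation takes the displayed form.

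The main obstacle will be the bookkeeping in the second step: correctly tracking, around the length-four cycle $\{v_1,v_3,v_4,v_6\}$, both the cyclic order in which the side-pairings are composed and the sign of each exponent, and then reducing the resulting word via $\omega^3=1$ to precisely the relator written in the statement. Checking the hypotheses of Poincar\'e's theorem is routine here, since the Proposition has already exhibited a genuine fundamental domain together with its full set of side identifications.
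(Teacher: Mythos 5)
Your approach is the same as the paper's: the paper's proof likewise takes the side-pairings $a=h\omega^2$, $b=h\omega$, $\gamma$ as generators, imposes the cube of each vertex-cycle transformation as a relator (written there as $(b^{-1}a)^3=(b^{-1}\gamma a\gamma^{-1})^3=1$), and then changes generators to $h,\omega,\gamma$. You have simply made the appeal to Poincar\'e's polygon theorem, the identification of the two vertex cycles $\{v_2,v_5\}$ and $\{v_1,v_3,v_4,v_6\}$, and the angle/order check explicit; all of that, and the first relator $\omega^3$, is correct.

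The step you defer to the end --- ``normalizing'' your cycle word $(h\omega)^{-1}\gamma^{-1}(h\omega^2)\gamma$ to the printed relator $\omega^2h^{-1}\gamma h\omega^2\gamma^{-1}$ --- cannot actually be carried out, and this is not mere bookkeeping. Your word equals $\omega^2h^{-1}\gamma^{-1}h\omega^2\gamma$, which differs from the printed one by transposing $\gamma$ and $\gamma^{-1}$, and the two are not related by conjugation or inversion: computing in $\Lambda$ with $\gamma=\tfrac{3}{2}-\tfrac{5}{2}x-2y$ and $h=4+xy$, one finds $\omega^2h^{-1}\gamma^{-1}h\omega^2\gamma=-\tfrac{1}{2}-\tfrac{9}{2}x-3y+xy$, of reduced trace $-1$ and norm $1$, hence elliptic of order exactly $3$; whereas $\omega^2h^{-1}\gamma h\omega^2\gamma^{-1}=-23-12x-12y+4xy$ has reduced trace $-46$ and is hyperbolic, so its cube is not the identity. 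In other words, your cycle word is the correct relator and the displayed one is not satisfied by the stated generators; the displayed presentation becomes correct only after replacing $\gamma$ by $\gamma^{-1}$ (which yields an isomorphic presented group, and the same transposition appears in the paper's own proof, where the relation is written $(b^{-1}\gamma a\gamma^{-1})^3=1$). So your argument is sound and, pushed to completion, proves the corollary with the relator $(\omega^2h^{-1}\gamma^{-1}h\omega^2\gamma)^3=1$; you should report the mismatch with the printed word rather than assert that a conjugation will reconcile them.
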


\begin{proof}
The fundamental domain gives us the generators $a = h \omega^2$,
$b =  h \omega$, and $\gamma$, and relations
\[
(b^{-1} a)^3 = (b^{-1} \gamma a \gamma^{-1})^3 = 1.
\]
By switching generators to $h$, $\omega$, and $\gamma$, we obtain the
desired presentation.
\end{proof}

\section{Complex multiplication}

In this section we review results of Elkies on the structure of the
Shimura curve $X^D_{\mb Q}$ and its quotients. All of the results in
this section, and in particular the complex multiplication (CM)
points, are taken directly from
\cite[Section~5.2]{elkies-computations}.

The quotient $(X^D_{\mb Q})^*$ by the full group of Atkin-Lehner
involutions is of genus zero, with one elliptic point $P_6$ of order
$6$ (CM discriminant $-3$) and three elliptic points $P_2$, $P'_2$,
and $P''_2$ of order two (CM discriminants $-12$, $-15$, and $-60$
respectively).

There is thus a unique isomorphism $t\co (X^D)^* \to \mb P^1$ so that
$t(P_6) = \infty$, $t(P_2) = 0$, and $t(P_2'') = 1$. Elkies showed
that this coordinate satisfies $t(P'_2) = 81$, and that (among others)
there is a further rational point with $t$-coordinate $-27$ and CM
discriminant $-7$ \cite[Table~6]{elkies-computations}.

We now examine the Atkin-Lehner involutions $w_3$, $w_5$, and
$w_{15}$.  We recall that a point of $X^D$ is fixed by $w_d$ if and
only if it parametrizes a surface with a $\Lambda$-linear endomorphism
$t$ satisfying $t^2 + mt + d = 0$; this means that it has complex
multiplication by the splitting field of this polynomial, and that
this splitting field also splits $D$.

The involution $w_5$ fixes only those points which correspond to
points with an action by $\mb Z[\sqrt{-5}]$. However, since $3$ splits
in $\mathbb{Q}(\sqrt{-5})$, this field does not split $D$ and so the
involution $w_5$ has no fixed points.

Similarly, the involution $w_3$ on $X^D$ fixes only those points
parametrizing objects with an action of $\mb Z[\sqrt{-3}]$. These are
precisely the CM points with discriminants $-3$ and $-12$, and hence
the preimages of $P_6$ and $P_2$.

Finally, the involution $w_{15}$ fixes two points with an action of
$\mb Z[\sqrt{-15}]$. These must have CM discriminant $-15$ or $-60$,
and so are the preimages of $P_2'$ and $P_2''$.

The curve $(X^D_{\mb Q})/w_3$ has exactly one preimage each of $P_2$
and $P_6$.  The two preimages of $P_2'$ are complex conjugate and
their coordinates must generate the Hilbert class field of
$\mb{Q}(\sqrt{-15})$ by work of Shimura; this class field is $\mb
Q(\sqrt{-15},\sqrt{-3})$.  The preimages are thus defined over $\mb
Q(\sqrt{-3})$ and we can define a unique Galois-invariant coordinate
$s$ such that $s(P_6) = \infty$, $s(P_2) = 0$, and the preimages of
$P_2'$ are taken to $\pm \sqrt{-3}$.  This coordinate satisfies $t =
-3s^2$.

Then \cite[(76)]{elkies-computations} shows that there is a coordinate
$y$ on $X^D/w_{15}$ so that
\begin{equation}
  \label{eq:elkiesequation}
y^2 = -(3s^2 + 1)(s^2 + 27) = -(1-t)(27-t/3).
\end{equation}
Together $s$ and $y$ generate the function field of the curve.  By
construction, $y$ is fixed by $w_{15}$ and negated by $w_3$ and $w_5$,
while $s$ is fixed by $w_3$ and negated by $w_5$ and $w_{15}$.

\section{Rational uniformization}

In this section, we review structure of the rational Shimura curve
$X^D_{\mb Q}$ and its quotients, continuing notation from the previous
section.

\begin{prop}
The curves $(X^D_{\mb Q})/w_3$ and $(X^D_{\mb Q})/w_{15}$ are both
isomorphic to $\mb P^1$, and the natural map from $X^D$ into their
product is an embedding.
\end{prop}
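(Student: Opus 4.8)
The plan is to run a Riemann--Hurwitz plus CM-point analysis on the genus $1$ curve $X^D$, identify the genus-$0$ quotients with $\mb P^1$ using Elkies' explicit coordinates, and then verify that the two quotient maps together define a closed immersion.

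First I would show that $w_3$ and $w_{15}$ each act on $X^D_{\mb C}$ with a fixed point. For $w_3$ this is immediate from the complex uniformization: $\tilde w_3 = x$ satisfies $x^2 = -3$, so it is elliptic and fixes $i \in {\cal H}$, which already appeared above as the elliptic point $v_5$. For $w_{15}$ one computes directly that $\tilde w_{15} = 5x + 2xy$ has norm $15$ and trace $0$, with $\tilde w_{15}^2 = -15$, hence is elliptic and has a fixed point in ${\cal H}$. Since $X^D$ has genus $1$ and $w_3$, $w_{15}$ are involutions with fixed points, Riemann--Hurwitz forces $X^D/w_3$ and $X^D/w_{15}$ to have genus $0$ (in fact each $w_N$ then has exactly four fixed points). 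To upgrade ``genus $0$'' to ``$\cong \mb P^1_{\mb Q}$'' one must produce a $\mb Q$-rational point on each quotient; this is where the results of Elkies in~\cite[Section~5.2]{elkies-computations} enter, as they furnish explicit degree-$2$ rational functions on $X^D$ invariant under $w_3$ and under $w_{15}$ respectively, which descend to isomorphisms $X^D/w_3 \xrightarrow{\sim} \mb P^1$ and $X^D/w_{15} \xrightarrow{\sim} \mb P^1$.

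It then remains to prove that $\psi = (q_3, q_{15})\co X^D \to (X^D/w_3)\times(X^D/w_{15}) \cong \mb P^1\times\mb P^1$ is a closed immersion, where $q_N$ is the quotient map. I would argue $\psi$ is injective and unramified. If distinct geometric points $p\neq q$ satisfied $\psi(p) = \psi(q)$, then $q\in\{p, w_3 p\}$ and $q\in\{p, w_{15} p\}$ would force $w_3 p = w_{15} p = q$, so $p$ would be fixed by $w_3 w_{15} = w_5$. Likewise a ramification point of $\psi$ would be a ramification point of both $q_3$ and $q_{15}$, i.e.\ a common fixed point of $w_3$ and $w_{15}$, hence again a fixed point of $w_5$. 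So it suffices to show $w_5$ acts freely on $X^D$. A fixed point of $w_5$ would arise from an element $g\in\Lambda$ of norm $5$ that is elliptic, i.e.\ with $\Tr(g)^2 < 20$; such a $g$ generates an imaginary quadratic order in $\mb Q(\sqrt{\Tr(g)^2 - 20})$ embedded in $D$, forcing both $3$ and $5$ to be non-split in that field. Running through the finitely many traces $t$ with $t^2\in\{0,1,4,9,16\}$ one finds that in every case one of $3,5$ splits, so no such $g$ exists --- consistently with the fact noted above that $\tilde w_5$ is a hyperbolic translation. Therefore $\psi$ is injective and unramified, and since $X^D$ is proper and smooth over $\mb Q$ this makes $\psi$ a closed immersion, with image a smooth curve of bidegree $(2,2)$ in $\mb P^1\times\mb P^1$.

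The main obstacle is the step from ``the quotients have genus $0$'' to ``the quotients are $\mb P^1$ over $\mb Q$'': a priori a genus-$0$ curve over $\mb Q$ could be an anisotropic conic, and $X^D$ itself has no rational (indeed no real) points. This is precisely where the argument leans on Elkies' explicit rational parametrizations, equivalently on exhibiting a $\mb Q$-point on each Atkin--Lehner quotient. The remaining ingredients --- the genus computation and the closed-immersion check --- are formal given the complex uniformization already in hand.
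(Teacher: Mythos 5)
Your proposal is correct and follows essentially the same route as the paper: the quotients have genus $0$ by a fixed-point analysis, their rationality over $\mb Q$ is imported from Elkies' explicit coordinates, and the product map fails to be an embedding only at fixed points of $w_5 = w_3 w_{15}$, which do not exist because any imaginary quadratic field generated by an elliptic norm-$5$ element of $\Lambda$ would have $3$ or $5$ split. The paper reaches genus $0$ by identifying the fixed points of $w_3$ and $w_{15}$ as explicit CM points rather than via Riemann--Hurwitz, and rules out $w_5$-fixed points by checking only $\mb Q(\sqrt{-5})$ (which suffices, since $P_5^2=(5)$ forces a norm-$5$ element of $\Lambda$ to have trace divisible by $5$, hence an elliptic one to have trace $0$), but these are only cosmetic differences from your more exhaustive trace enumeration.
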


\begin{proof}
The map $X^D_{\mb Q} \to (X^D_{\mb Q})/w_3 \times (X^D_{\mb
Q})/w_{15}$ only fails to be at an embedding at places fixed by
both $w_3$ and $w_{15}$, hence $w_5$.

The involution $w_5$, with no fixed points, acts as an order-2
translation on the genus $1$ curve. The quotient $X^D/w_5$ is of genus
one, and under the projection $X^D/w_5 \to (X^D)^*$ there is a single
preimage of $P_6$ which is an elliptic point of order $3$. (The
subspace of Figure~\ref{fig:fundamentaldomain} with positive real part
is a fundamental domain for $X^D_{\mb C}/w_5$.)

The quotient curve $X^D/w_3$ is of genus zero with two elliptic points
of order $6$ (the preimages of $P_6$) and two of order $2$ (the
preimages of $P_2$).  As $X^D/w_3$ is smooth of genus zero and has
points defined over $\mb Q$, it is isomorphic to $\mb P^1$ over $\mb
Z[1/15]$.

The quotient curve $X^D/w_{15}$ is also of genus zero, with one
elliptic point of order 3 (the preimage of $P_6$) and four of order 2
(the preimages of $P_2'$ and $P_2''$).  Similarly, it is isomorphic to
$\mb P^1$ over $\mb Z[1/15]$.
\end{proof}

\begin{prop}
\label{prop:definingequation}
There are meromorphic functions
\begin{align*}
u\co (X^D_{\mb Q})/w_3 &\overto^\sim \mb P^1_{\mb Q} \text{ and}\\
v\co (X^D_{\mb Q})/w_{15} &\overto^\sim \mb P^1_{\mb Q}  
\end{align*}
so that the resulting embedding $X^D_{\mb Q} \to \mb P^1_{\mb Q}
\times \mb P^1_{\mb Q}$ has, as image, the (closure of the) set of
solutions of
\begin{equation}
  \label{eq:defining}
(u^2 + u + 1)(v^2 + v + 1) = \tfrac{5}{9}.
\end{equation}
The Atkin-Lehner operator $w_3$ fixes $u$ and sends $v$ to $-1-v$,
while $w_{15}$ fixes $v$ and sends $u$ to $-1-u$.
\end{prop}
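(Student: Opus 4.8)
The plan is to leverage the closed embedding $X^D_{\mb Q}\into (X^D_{\mb Q})/w_3\times(X^D_{\mb Q})/w_{15}$ of the previous proposition together with the Atkin--Lehner symmetries to force the equation into the stated normal form, and then to pin down the constant.

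\emph{Step 1 (the image is a $(2,2)$-curve).} Both composite maps $X^D_{\mb Q}\to(X^D_{\mb Q})/w_3$ and $X^D_{\mb Q}\to(X^D_{\mb Q})/w_{15}$ are the degree-$2$ quotient morphisms, so after identifying each quotient with $\mb P^1$ the image of $X^D_{\mb Q}$ meets a general ruling of either factor in two points. Hence the image is an effective divisor of class $(2,2)$, cut out by a single bihomogeneous form $F$ of bidegree $(2,2)$, unique up to scalar; since the embedding realizes $X^D_{\mb Q}$ as $V(F)$, the curve $V(F)$ is smooth of genus $1$.

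\emph{Step 2 (the two induced involutions).} As $w_3$ and $w_{15}$ commute, $w_{15}$ (equivalently $w_5$) induces an involution $\iota_{15}$ of $(X^D_{\mb Q})/w_3$, trivial on $(X^D_{\mb Q})/w_{15}$, and symmetrically $w_3$ induces an involution $\iota_3$ of $(X^D_{\mb Q})/w_{15}$, trivial on $(X^D_{\mb Q})/w_3$; both are nontrivial, since $\iota_{15}=\mathrm{id}$ would force $w_5=w_3$ on $X^D$ (using that $w_5$ is fixed-point-free), hence $w_{15}=\mathrm{id}$, contradicting that $w_{15}$ interchanges the two order-$3$ elliptic points. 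Equivariance of the embedding shows $V(F)$ is stable under $\iota_{15}\times\mathrm{id}$ and under $\mathrm{id}\times\iota_3$. Furthermore $\iota_3$ fixes the image in $(X^D_{\mb Q})/w_{15}$ of the two order-$3$ elliptic points (the unique order-$3$ elliptic point of $X^D/w_{15}$, hence $\mb Q$-rational), and $\iota_{15}$ fixes the images of the CM$(-15)$ and CM$(-60)$ pairs in $(X^D_{\mb Q})/w_3$ (each such pair is interchanged by $w_3$, giving a $\mb Q$-rational fixed point). An involution of $\mb P^1_{\mb Q}$ with a rational fixed point is $\PGL_2(\mb Q)$-conjugate to $t\mapsto -1-t$, so we may choose the coordinates $u$ on $(X^D_{\mb Q})/w_3$ and $v$ on $(X^D_{\mb Q})/w_{15}$ so that $\iota_{15}$ is $u\mapsto -1-u$ and $\iota_3$ is $v\mapsto -1-v$; this already gives the asserted action of $w_3$ and $w_{15}$.

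\emph{Step 3 (invariant forms and the constant).} Write $P=u^2+u$, $Q=v^2+v$. A polynomial of degree $\le2$ in $u$ (resp.\ $v$) fixed by $u\mapsto -1-u$ (resp.\ $v\mapsto -1-v$) is a $\mb Q$-combination of $1$ and $P$ (resp.\ $1$ and $Q$), so $F=A+BP+CQ+DPQ$ up to scalar. Irreducibility of $V(F)\cong X^D$ gives $AD\ne BC$, and inspecting the double cover $V(F)\to\mb P^1_u$ shows $D=0$ would make $V(F)$ rational, so $D\ne0$ and we take $D=1$. Since $A+BP+CQ+PQ=(P+C)(Q+B)+(A-BC)$, and the remaining coordinate freedom commuting with $t\mapsto -1-t$ (the scalings $t+\tfrac12\mapsto\lambda(t+\tfrac12)$, sending $P$ to $\lambda^2 P+\tfrac{\lambda^2-1}4$ and likewise $Q$) lets us rescale, the equation takes the form $(u^2+u+1)(v^2+v+1)=c$ for some $c\in\mb Q^\times$. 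Finally $c=\tfrac59$: this follows by matching with the explicit uniformizing functions of Elkies~\cite[Section~5.2]{elkies-computations}; alternatively, $V(F)\cong X^D$ has good reduction away from $15$, which via the Weierstrass model $w^2=-\tfrac34(u^2+u+1)\bigl(u^2+u+1-\tfrac{4c}3\bigr)$ confines the primes of bad reduction to divisors of $c$, of $16c-9$, and of $3$, leaving finitely many candidates, which are then separated by a point count over a small good prime (the reduction $(u^2+u+1)(v^2+v+1)=1$ has four $\mb F_2$-points, matching $\#X^D(\mb F_2)=2+1-a_2=4$ for the weight-$2$ level-$15$ newform attached to $\mathrm{Jac}(X^D)$ by Jacquet--Langlands) or by the $j$-invariant of a CM point.

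\emph{Main obstacle.} The symmetry argument determines only the shape of the equation; identifying the constant as $\tfrac59$ --- and, along the way, checking that the coordinates can be chosen over $\mb Q$ with equal coefficients on $P$ and $Q$ --- is where genuine arithmetic (the CM-point computation of Elkies) is needed. The rest is bookkeeping: the bidegree count, the nontriviality and rationality of the induced involutions, and the classification of doubly invariant $(2,2)$-forms.
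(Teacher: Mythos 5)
Your route is genuinely different from the paper's: you derive the shape of the defining equation from the classification of bidegree-$(2,2)$ forms invariant under the two induced involutions, whereas the paper constructs $u$ and $v$ explicitly from Elkies' coordinates $t$, $s$, $y$ on $(X^D)^*$, $X^D/w_3$ and $X^D/w_{15}$ and verifies the relation by direct substitution. Steps 1--2 are essentially sound (one small omission: you should rule out $F$ being \emph{anti}-invariant under either involution, which is immediate because a $(2,2)$-form anti-invariant in $u$ is divisible by $2u+1$, hence reducible), and the symmetry argument does buy a conceptual explanation of why the answer has this doubly symmetric product form. But the proof is not complete, and the gap sits exactly on the arithmetic content of the proposition.

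Concretely, two things are missing. First, Step 3 only yields $(u^2+u+\alpha)(v^2+v+\beta)=c$ with $\alpha,\beta,c\in\mb Q$: the residual coordinate freedom commuting with $t\mapsto-1-t$ replaces $4\alpha-1$ by $(4\alpha-1)/\lambda^2$, so normalizing $\alpha=\beta=1$ over $\mb Q$ requires $3(4\alpha-1)$ and $3(4\beta-1)$ to be rational squares. Geometrically this is the statement that the fiber of $X^D$ over $v=\infty$ has $u$-coordinates generating $\mb Q(\sqrt{-3})$, which is precisely the Hilbert-class-field computation for $\mb Q(\sqrt{-15})$ that the paper imports from Elkies; you correctly flag this but do not supply it. Second, your fallback for $c=\tfrac59$ does not close: the candidate constants are those rational $c$ with $c$ and $16c-9$ both $\{3,5\}$-units up to sign (an $S$-unit condition you do not solve), and the proposed $\mb F_2$ point count cannot separate them---for instance $c=\tfrac35$ satisfies $16c-9=\tfrac35$ and reduces to the same equation mod $2$ as $c=\tfrac59$. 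So the only workable route you offer for the constants is ``match with Elkies,'' which is the paper's proof. One further caution: your identification of the rational fixed points of $\iota_{15}$ on $X^D/w_3$ as the images of the CM$(-15)$ and CM$(-60)$ pairs is the abstractly forced one (fixed points of $\iota_{15}$ are images of $w_{15}$-fixed points, since $w_5$ acts freely), but it is in tension with Table~4.1, where the fixed locus of $u\mapsto-1-u$ is $\{u(P_6),u(P_2)\}=\{\infty,-\tfrac12\}$ while the CM$(-15)$ images sit at $\omega,\omega^2$ and are swapped; before matching anything with Elkies you will need to resolve which quotient each of $u$ and $v$ actually lives on.
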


\begin{proof}
We will first change the coordinates $t$, $s$, and $y$, from the
previous section, into new ones that will ultimately prove
better-behaved integrally.

Let $w = \frac{t-1}{4}$ denote the ``equivalent, $2$-adically good''
coordinate Elkies describes on $(X^D_{\mb Q})^*$. We also let $u =
\frac{s-3}{6}$ and $v = \frac{3y - (t-81)}{2(t-81)}$ so that $t =
4w+1$, $s = 6u+3$, and $y = \frac{4(2v+1)(w-20)}{3}$.  These still
generate the function field.

The equation $t = -3s^2$ is equivalent to $w = -27u^2 -27u - 7$.

The equation $y^2 = -(1-t)(27-t/3)$ becomes $v^2 + v =
\frac{w-5}{20-w}$, or $w = 20 - \frac{15}{v^2 + v + 1}$.

Putting these two equations for $w$ together, we find that $u$ and $v$
generate the function field of the curve $X^D_{\mb Q}$ and satisfy the
formula of equation~(\ref{eq:defining}).

Together the coordinates $u$ and $v$ determine an embedding $X^D_{\mb
  Q} \to \mb P^1_{\mb Q} \times \mb P^1_{\mb Q}$ over $\mb Q$,
representing the map $X^D_{\mb Q} \to X^D_{\mb Q}/w_3 \times X^D_{\mb
  Q}/w_{15}$.
\end{proof}

\section{CM points and integral uniformation}

We recall the intersection theory employed in \cite{shimc}, based on
\cite{kudla-rapoport-yang}.  Given an element $z \in \mb C$ generating
a quadratic imaginary field, let $D_z$ be the divisor on $X^D_{\mb
  Z[1/15]}$ parametrizing points with complex multiplication by
$z$. The following is derived by knowing that when two such divisors
with distinct associated fields intersect, the resulting point has two
distinct types of complex multiplication and thus must represent a
supersingular point.

\begin{prop}
Let two orders be generated by elements $x$ and $y$ contained in
nonisomorphic quadratic imaginary fields, with $d_x$ and $d_y$ their
respective discriminants.  If $D_x \cap D_y$ contains points in
characteristic $p$, there must be an integer $m$ such that the
quantity
\begin{equation}
  \label{eq:disc}
\Delta = (2m + \Tr(x)\Tr(y))^2 - \Nm(x) \Nm(y)
\end{equation}
produces Hilbert symbols
\begin{equation}
  \label{eq:hilbert}
(\Delta, d_y)_q = (\Delta,d_x)_q
\end{equation}
which are nontrivial precisely when $q \in \{p,\infty,3,5\}$.  (In
particular, $\Delta$ must be negative.)
\end{prop}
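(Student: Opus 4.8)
The plan is to convert the geometric hypothesis into the existence of a definite quaternion algebra over $\mb Q$ with a prescribed set of ramified places, and then to read the Hilbert symbols off its Hasse invariant.

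First I would interpret a geometric point $P$ of $D_x\cap D_y$ in characteristic $p$ --- necessarily $p\neq 3,5$, since $X^D$ lives over $\mb Z[1/15]$ --- as a fake elliptic curve $A/\cF_p$ that carries, in addition to its $\Lambda$-action, $\Lambda$-linear endomorphisms $x$ and $y$ generating the two distinct imaginary quadratic fields $\mb Q(\sqrt{d_x})$ and $\mb Q(\sqrt{d_y})$. I claim $A$ must be supersingular. Since $p\nmid 15$ we have $\Lambda\otimes\mb Z_p\cong M_2(\mb Z_p)$, so the formal group of $A$ splits as a sum of two isomorphic one-dimensional formal groups and $A$ is either ordinary or supersingular; if $A$ were ordinary then $A$ is isogenous to a square $E^2$ of an elliptic curve with $\End^0(E)$ imaginary quadratic, and $\End^0_\Lambda(A)$ is the centralizer of $D$ in $M_2(\End^0(E))$, which has dimension $2$ over $\mb Q$ and hence cannot contain the two distinct quadratic subfields $\mb Q(\sqrt{d_x})$, $\mb Q(\sqrt{d_y})$. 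Thus $A$ is supersingular, $\End^0(A)\cong M_2(B_{p,\infty})$ for the quaternion algebra $B_{p,\infty}$ over $\mb Q$ ramified at $p$ and $\infty$, and by the double centralizer theorem $M_2(B_{p,\infty})\cong D\otimes_{\mb Q}B$ with $B=\End^0_\Lambda(A)$. Comparing Brauer classes, $B$ is the definite quaternion algebra over $\mb Q$ ramified at exactly $\{3,5\}\,\triangle\,\{p,\infty\}=\{3,5,p,\infty\}$.

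Next I would note that $B$ is generated over $\mb Q$ by the images of $x$ and $y$: these lie in distinct quadratic subfields of a division algebra, so they do not commute, and therefore $B$ is determined by $\Tr(x),\Nm(x),\Tr(y),\Nm(y)$ together with one more integer, namely $\Tr(xy)$, which is integral because $xy$ lies in the order $\End_\Lambda(A)\subseteq B$. To make the Hasse invariant explicit I would realize $B$ as a cyclic algebra over $\mb Q(\sqrt{d_x})$: replace $x,y$ by their trace-zero parts, subtract from the second its $\mb Q(\sqrt{d_x})$-component to obtain a generator anticommuting with $\mb Q(\sqrt{d_x})$, and compute the square of that generator; completing the square and discarding square factors (harmless for Hilbert symbols) presents this square as $\Delta=(2m+\Tr(x)\Tr(y))^2-\Nm(x)\Nm(y)$, the integer $m$ recording $\Tr(xy)$. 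Hence $B\cong\left(\frac{d_x,\Delta}{\mb Q}\right)$, and by the symmetric roles of $x$ and $y$ also $B\cong\left(\frac{d_y,\Delta}{\mb Q}\right)$, so that $(\Delta,d_x)_q=(\Delta,d_y)_q=\mathrm{inv}_q(B)$ at every place $q$. Since $B$ is ramified precisely at $\{3,5,p,\infty\}$, these symbols are nontrivial precisely for $q$ in that set; in particular ramification at $\infty$ forces $(\Delta,d_x)_\infty=-1$, which together with $d_x<0$ gives $\Delta<0$.

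The main obstacle is the moduli-theoretic input of the first step: showing that $D_x\cap D_y$ meets the characteristic $p$ fibre only in the supersingular locus, and pinning down the exact ramification of $\End^0_\Lambda(A)$ there. This is where the structure of the supersingular (and bad) reduction of the Shimura curve is genuinely used --- as developed in Morita's work and organised through the Kudla--Rapoport--Yang intersection-theoretic formalism recalled above. Once that is in hand, the passage to the quaternion algebra generated by $x$ and $y$ and the extraction of its Hasse invariant are routine, the only delicate point being the choice of normalization that makes the constant emerge in exactly the stated shape.
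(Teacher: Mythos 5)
The paper offers no proof of this proposition at all---it is recalled from \cite{shimc} and the Kudla--Rapoport--Yang formalism---so your reconstruction can only be measured against the standard argument, which it follows faithfully in outline: an intersection point in characteristic $p$ forces supersingularity (for an ordinary point $\End^0_\Lambda(A)$ is the centralizer of $D$ in $D\otimes K\cong M_2(K)$, namely the commutative field $K$ itself, which cannot contain two nonisomorphic imaginary quadratic fields); the double centralizer theorem and the Brauer-group computation identify $B=\End^0_\Lambda(A)$ as the quaternion algebra ramified exactly at $\{3,5\}\,\triangle\,\{p,\infty\}=\{3,5,p,\infty\}$; presenting $B$ as $(d_x,\Delta)\cong(d_y,\Delta)$ turns ramification into the stated Hilbert-symbol condition, with negativity of $\Delta$ forced at the infinite place; and $m$ is integral because $\mb Z[x,y]\subseteq\End_\Lambda(A)$ is an order. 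All of that is right.

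The one step you declare ``routine'' and skip is the only one where something can go wrong, and it does. Carrying out your own recipe---set $x_0=x-\tfrac{1}{2}\Tr(x)$, $y_0=y-\tfrac{1}{2}\Tr(y)$, replace $y_0$ by $y_0'=y_0-\tfrac{\Tr(x_0y_0)}{2x_0^2}x_0$ so that it anticommutes with $x_0$, and compute $(y_0')^2=\tfrac{d_xd_y-4s^2}{4d_x}$ with $s=\Tr(xy)-\tfrac{1}{2}\Tr(x)\Tr(y)$---gives, after clearing square factors and using $(d_x,-d_x)_q=1$, the presentation $B\cong\left(d_x,\,(2m+\Tr(x)\Tr(y))^2-d_xd_y\right)$ with $m=\Tr(xy)-\Tr(x)\Tr(y)$, where $d_x=\Tr(x)^2-4\Nm(x)$. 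That is \emph{not} the displayed quantity $(2m+\Tr(x)\Tr(y))^2-\Nm(x)\Nm(y)$. The discrepancy is visible in the paper's own data: for $P_6$ and $Q$ (discriminants $-3$ and $-7$, both generators of trace $-1$ and norms $1$ and $2$) the printed formula gives $(2m+1)^2-2$, which can never equal the values $-20$ and $-12$ appearing in Table~\ref{tab:cm-intersections}, whereas $(2m+1)^2-d_xd_y=(2m+1)^2-21$ reproduces the table exactly (and likewise for the other two pairs). So equation~(\ref{eq:disc}) as printed evidently carries a typo, $\Nm(x)\Nm(y)$ standing where $d_xd_y$ should be, and by asserting that your computation lands on the printed expression you have endorsed the typo rather than derived the formula. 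Do the two-line computation explicitly and state the corrected $\Delta$; the rest of your argument then closes without change.
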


We now choose a point $Q$ on $X^D/w_3$ with $s$-coordinate
$3$. This makes $t = -3s^2$ equal $-27$, which was already established
to give it CM discriminant $-7$.

Table~\ref{tab:cm-points} collects together the values of the
coordinates $u$ and $v$ on the (preimages of) complex multiplication
points discussed in the previous sections.  Where not explicitly
stated previously, these are explicitly derived from the equations
relating these six coordinates. By convention, in the following table
$\omega$ is the third root of unity $\frac{-1 + \sqrt{-3}}{2}$.

\begin{table}[h]
  \centering
\begin{tabular}{cc|ccc|ccc}
Point&CM disc&$t$&$s$&$y$&$w$&$u$&$v$\\
\hline
$P_6$&$-3$&
$\infty$&$\infty$&$\infty$&
$\infty$&$\infty$&$\omega$\\
$P_2$&$-12$&
$0$&$0$&$3 \sqrt{-3}$&
$\frac{-1}{4}$&$\frac{-1}{2}$&$\frac{-3 + 1/\sqrt{-3}}{6}$\\
$P'_2$&$-15$&
$81$&$3\sqrt{-3}$&$0$&
$20$&$\omega$&$\infty$\\
$P''_2$&$-60$&
$1$&$1/\sqrt{-3}$&$0$&
$0$&$\frac{\omega-4}{9}$&$\frac{-1}{2}$\\
$Q$&$-7$&
$-27$&$3$&$12\sqrt{-7}$&
$-7$&$0$&$\frac{\sqrt{-7} - 3}{6}$\\
\end{tabular}
\caption{Complex multiplication points}
\label{tab:cm-points}
\end{table}

We thus consider the divisors on $X_{\mb Z[1/15]}$ with complex
multiplication by $\frac{-1 + \sqrt{-3}}{2}$, by $\frac{-1 +
  \sqrt{-7}}{2}$, and by $\frac{-1 + \sqrt{-15}}{2}$.  These have
discriminants $-3$, $-7$, and $-15$ respectively, and their divisors
are associated to the primages of the points $P_6$, $P_2'$, and $Q$ in
table~\ref{tab:cm-points}.

\begin{prop}
The divisors associated to $P_6$, $P_2'$, and $Q$ do not intersect on $X^D$.
\end{prop}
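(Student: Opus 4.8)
The plan is to apply the intersection criterion of the preceding proposition to each of the three pairs of divisors. Represent the three CM points by the elements $x_3=\frac{-1+\sqrt{-3}}{2}$, $x_7=\frac{-1+\sqrt{-7}}{2}$, $x_{15}=\frac{-1+\sqrt{-15}}{2}$, with discriminants $-3$, $-7$, $-15$; each satisfies a monic integral quadratic with trace $-1$, and their norms are $1$, $2$, $4$ respectively. For a pair $(x,y)$ among these, $\Tr(x)\Tr(y)=1$, so the quantity in the criterion is $\Delta=(2m+1)^2-\Nm(x)\Nm(y)$. Since the proposition forces $\Delta<0$ while $\Nm(x)\Nm(y)\le 2\cdot 4=8<9$, we must have $(2m+1)^2=1$; thus $\Delta$ is pinned to the single value $1-\Nm(x)\Nm(y)$, namely $-1$ for the pair with discriminants $\{-3,-7\}$, $-3$ for $\{-3,-15\}$, and $-7$ for $\{-7,-15\}$.

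It therefore suffices to show, for each of these forced values of $\Delta$, that there is a place $q$ at which the two Hilbert symbols $(\Delta,d_x)_q$ and $(\Delta,d_y)_q$ disagree: this contradicts the criterion (which requires them to agree at every $q$), so no valid $m$ exists and the two divisors are disjoint. Since these symbols take values in $\{\pm 1\}$, the equality $(\Delta,d_x)_q=(\Delta,d_y)_q$ is equivalent to $(\Delta,d_x d_y)_q=1$, so it is enough to exhibit one prime $q$ with $(\Delta,d_x d_y)_q=-1$.

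This last step is a short computation with local Hilbert symbols at the primes $2,3,5$. For $\{-3,-7\}$ we have $\Delta=-1$ and $d_x d_y=21$, and $(-1,21)_3=\left(\frac{-1}{3}\right)=-1$, the factor $(-1,7)_3$ being trivial. For $\{-3,-15\}$ we have $\Delta=-3$ and $d_x d_y=45=3^2\cdot 5$, so $(-3,45)_5=(-3,5)_5=\left(\frac{-3}{5}\right)=\left(\frac{2}{5}\right)=-1$. For $\{-7,-15\}$ we have $\Delta=-7$ and $d_x d_y=105=3\cdot 5\cdot 7$, so $(-7,105)_5=(-7,5)_5=\left(\frac{-7}{5}\right)=\left(\frac{3}{5}\right)=-1$. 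In each case the criterion fails, so none of the three pairs of divisors meet on $X^D$.

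The argument presents no serious obstacle; its two delicate points are already visible above. First, one must check that $\Delta$ is genuinely pinned to a single value for each pair, which is exactly where the smallness of the norms $1,2,4$ enters — otherwise one would have to quantify over infinitely many candidate $\Delta$. Second, the local Hilbert symbols must be evaluated correctly at the small primes. A pleasant feature of the computation is that the offending place is always a fixed finite prime ($3$ for the first pair, $5$ for the other two), so the disjointness holds irrespective of the residue characteristic $p$ of a hypothetical intersection point.
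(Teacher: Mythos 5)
Your overall strategy---apply the criterion of the preceding proposition to each of the three pairs and show it cannot be satisfied---is also the paper's strategy, but the numerical input is wrong, and the error is not cosmetic: it changes which failure mode of the criterion is even available. You evaluate $\Nm(x)\Nm(y)$ literally on the generators $\frac{-1+\sqrt{d}}{2}$, getting $2$, $4$, $8$, and hence a single forced value $\Delta\in\{-1,-3,-7\}$ per pair. The quantity that actually enters the criterion is $d_xd_y$ (so $21$, $45$, $105$), as the paper's own intersection table shows: it lists $\Delta=-20,-12$ for the pair $\{-3,-7\}$, $\Delta=-44,-36,-20$ for $\{-3,-15\}$, and $\Delta=-104,-96,-80,-56,-24$ for $\{-7,-15\}$. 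One can see that the literal reading cannot be the intended one: $\Nm(x)$ changes when the generator $x$ is replaced by $x+c$, while an intersection criterion cannot depend on that choice. The underlying computation is of the quaternion algebra generated by the two CM fields inside $\End_\Lambda^0(A)$ at a putative intersection point, which (writing $x_0$, $y_0$ for the trace-zero parts, so $x_0^2=d_x/4$) is $\left(\frac{d_x,\,\Delta}{\mb Q}\right)$ with $\Delta=(2\Tr(xy)-\Tr(x)\Tr(y))^2-d_xd_y$.

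This matters because, with the correct $\Delta$, your method of disproof is structurally unavailable. Since $d_xd_y=(2m+\Tr(x)\Tr(y))^2-\Delta$ is visibly a norm from $\mb Q(\sqrt{\Delta})$, one has $(\Delta,d_xd_y)_q=1$ at every place $q$, i.e.\ $(\Delta,d_x)_q=(\Delta,d_y)_q$ always: there is never a place of disagreement to exhibit. The criterion instead fails for the other reason, and that is the content of the paper's proof: for every admissible $m$ (those with $(2m+1)^2<d_xd_y$, which $\Delta<0$ forces, just as you observe) the common set of places where the symbol is nontrivial turns out to be a two-element set $\{q,\infty\}$ with $q\in\{3,5,7,11,13\}$, which can never equal the required four-element set $\{p,\infty,3,5\}$ for a residue characteristic $p$ invertible in $\Zos[1/5]$. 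Your Hilbert-symbol arithmetic for the values $-1,-3,-7$ is internally correct, and bounding $m$ via $\Delta<0$ is the right first step, but the argument needs to be redone with $\Delta=(2m+1)^2-d_xd_y$, enumerating the admissible $m$ for each pair and checking that the resulting ramification sets have the wrong shape.
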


\begin{proof}
Table~\ref{tab:cm-intersections} summarizes the possible valid
discriminants $\Delta$ from equation~(\ref{eq:disc}) and the list of
primes with nonvanishing Hilbert symbols from
equation~(\ref{eq:hilbert}) for all possible pairs.
\begin{table}[h]
  \centering
  \begin{tabular}{cc|cc|l}
    Points&\phantom{Points}&$m$&$\Delta$&Nonvanishing Hilbert symbols\\
\hline
$P_6$ & Q
 &  $0,-1$ & $-20$ & $\{5,\infty\}$\\
&&  $1,-2$ & $-12$ & $\{3,\infty\}$\\
\hline
$P_6$ & $P_2'$
   &  $0,-1$ & $-44$ & $\{11,\infty\}$\\
  &&  $1,-2$ & $-36$ & $\{3,\infty\}$\\
  &&  $2,-3$ & $-20$ & $\{5,\infty\}$\\
\hline
$Q$ & $P_2'$
   &  $0,-1$ & $-104$ & $\{13,\infty\}$\\ 
  &&  $1,-2$ & $-96$ & $\{3,\infty\}$\\
  &&  $2,-3$ & $-80$ & $\{5,\infty\}$\\
  &&  $3,-4$ & $-56$ & $\{7,\infty\}$\\
  &&  $4,-5$ & $-24$ & $\{3,\infty\}$
  \end{tabular}
  \caption{Complex multiplication intersections}
  \label{tab:cm-intersections}
\end{table}

As a result, none of these three divisors intersect at any prime in
$\mb Z[1/15]$.
\end{proof}

\begin{prop}
The embedding $u \times v$ of Proposition~\ref{prop:definingequation}
extends to an embedding $X^D \to \mb P^1 \times \mb P^1$ over $\mb
Z[1/15]$, giving $X^D$ the same defining equation integrally.
\end{prop}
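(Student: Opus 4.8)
The plan is to show that the rational map defined by $(u,v)$ over $\mathbb{Q}$ has good integral behavior over $\mathbb{Z}[1/15]$: that it is defined everywhere, is a closed immersion, and has image exactly the closed subscheme $V\subset \mathbb{P}^1\times\mathbb{P}^1$ cut out by equation~(\ref{eq:defining}). Since $X^D$ is a smooth proper curve over $\mathbb{Z}[1/15]$ (Morita), the coordinates $u$ and $v$ give at least a rational map to $\mathbb{P}^1\times\mathbb{P}^1$; because the source is a smooth curve, this rational map automatically extends to an everywhere-defined morphism $f\co X^D \to \mathbb{P}^1\times\mathbb{P}^1$ over $\mathbb{Z}[1/15]$. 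The subscheme $V$ cut out by~(\ref{eq:defining}) is flat over $\mathbb{Z}[1/15]$ (the defining polynomial is primitive, or one checks the fibers all have the expected dimension), proper, and in each geometric fiber it is a curve; its generic fiber is $X^D_{\mathbb{Q}}$ by Proposition~\ref{prop:definingequation}. So $f$ factors through $V$, at least after noting $f$ lands in $V$ on the dense generic fiber and $V$ is closed. The task is then to upgrade $f\co X^D\to V$ from ``isomorphism on generic fibers'' to ``isomorphism over $\mathbb{Z}[1/15]$.''

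The key step is to control what could go wrong at the finitely many bad primes $p\notin\{3,5\}$. First I would check that $V$ is in fact smooth over $\mathbb{Z}[1/15]$: the partial derivatives of $(u^2+u+1)(v^2+v+1)-\tfrac59$ with respect to affine charts vanish simultaneously only where $2u+1=2v+1=0$ or at points forcing $\tfrac59$ to equal $\tfrac14\cdot(v^2+v+1)$ with $2u+1=0$, etc.; tracking these down shows the only candidate singular points occur in characteristics $2$, $3$, $5$. Characteristics $3$ and $5$ are excluded by hypothesis, and in characteristic $2$ one checks directly from~(\ref{eq:defining}) (where it becomes $(u^2+u+1)(v^2+v+1)=5/9 = 1/1$ over $\mathbb{F}_2$, i.e. a nonsingular affine curve — the Fermat-type structure keeps it smooth) that $V_{\mathbb{F}_2}$ is smooth. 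Hence $V$ is a smooth proper curve over $\mathbb{Z}[1/15]$ whose generic fiber is $X^D_{\mathbb{Q}}$, a genus-one curve; so each geometric fiber of $V$ has genus one, and $V$ is the unique smooth compactification of its generic fiber over every local ring, which forces $f\co X^D\to V$ to be an isomorphism since both are smooth proper models of $X^D_{\mathbb{Q}}$ over the Dedekind base $\mathbb{Z}[1/15]$ and a morphism between such that is an isomorphism on the generic fiber is an isomorphism. The CM-point input from the preceding propositions enters here: the non-intersection of the divisors of $P_6$, $P_2'$, and $Q$ over $\mathbb{Z}[1/15]$ guarantees that the relevant elliptic points and distinguished points stay distinct in every characteristic, which is what rules out $f$ collapsing a pair of sections or $V$ acquiring a node from two branches coming together — precisely the degenerations one would otherwise have to worry about.

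The main obstacle I expect is the smoothness (equivalently, good reduction) of $V$ at $p=2$, and more generally confirming that $f$ does not develop a base point or fail to be a closed immersion at a bad prime. The cleanest route is the model-theoretic one just sketched: reduce everything to ``two smooth proper curves over a Dedekind scheme, isomorphic on the generic fiber,'' which is automatically an isomorphism; but this requires knowing $V$ is smooth (hence has good reduction) at $2$, and that is the one place where~(\ref{eq:defining}) must be inspected by hand rather than quoted. An alternative, should the direct smoothness check be delicate, is to argue valuatively: for each prime $p$ and each $\mathbb{F}_p$-point of $X^D$, use that $X^D$ has good reduction and that the CM divisors are disjoint there to verify that $(u,v)$ reduces to a well-defined point of $V_{\mathbb{F}_p}$ and that the induced map on tangent spaces is injective, i.e. $f$ is an immersion fiberwise, then conclude by properness and the fibral isomorphism criterion. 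Either way the conclusion is that $X^D \to \mathbb{P}^1\times\mathbb{P}^1$ is a closed immersion over $\mathbb{Z}[1/15]$ with image $V$, so $X^D$ has the same defining equation~(\ref{eq:defining}) integrally.
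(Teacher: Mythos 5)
Your route is genuinely different from the paper's. The paper never analyzes the subscheme $V$ cut out by equation~(\ref{eq:defining}) directly. Instead it uses the CM computations head-on: the divisors $P_6$, $P_2'$, $Q$ are pairwise disjoint on $X^D$ over $\mb Z[1/15]$ (previous proposition), and the coordinate $u$ (resp.\ $v$) sends them to three pairwise disjoint divisors on $\mb P^1$ --- for $v$ this is exactly why the paper checks that $\frac{\sqrt{-7}-3}{6}$ is integral and differs from $\omega$ by a unit. A degree-one map from a genus-zero curve that keeps three disjoint sections disjoint cannot degenerate in any residue characteristic, so $u$ and $v$ extend to isomorphisms $X^D_{\mb Z[1/15]}/w_3 \cong \mb P^1$ and $X^D_{\mb Z[1/15]}/w_{15} \cong \mb P^1$, and the integral defining equation follows. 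Your plan --- verify smoothness of $V$ and invoke uniqueness of smooth proper models of a genus-one curve over a Dedekind base --- can be made to work and would make this step independent of the CM tables; but note that in your argument the CM points then do no work at all, so your claim that their disjointness is ``what rules out $f$ collapsing a pair of sections or $V$ acquiring a node'' misplaces where the logic lives. The paper's version also delivers the integral identifications of the two Atkin--Lehner quotients with $\mb P^1$, which are used later (e.g.\ for the nowhere-vanishing $1$-form and the pole description of automorphic forms); your version would have to recover those separately.

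Two concrete gaps. First, the assertion that the rational map $(u,v)$ ``automatically extends to an everywhere-defined morphism because the source is a smooth curve'' is false: $X^D$ over $\mb Z[1/15]$ is a two-dimensional regular scheme, and rational maps from such a scheme to $\mb P^1$ can have isolated base points in closed fibers (e.g.\ $[x:y]\mapsto[px:y]$ on $\mb P^1_{\mb Z}$ at the point $x=0$, $y=0$ of the fiber over $p$ --- excluding exactly this kind of degeneration is the content of the proposition, so it cannot be assumed). The repair is to use the birational statement rather than the morphism statement: once $V$ is smooth and proper over $\mb Z[1/15]$ with generic fiber the genus-one curve $X^D_{\mb Q}$, both $X^D$ and $V$ are relatively minimal regular models (irreducible genus-one fibers contain no $(-1)$-curves), and a birational map between relatively minimal models of a curve of genus $\geq 1$ over a Dedekind scheme extends to an isomorphism. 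Second, your smoothness check treats only the affine chart: $V$ is a $(2,2)$-curve meeting $u=\infty$ where $v^2+v+1=0$ and $v=\infty$ where $u^2+u+1=0$, so you must also run the Jacobian criterion there (it holds because $2v+1=\pm\sqrt{-3}$ is a unit away from $3$) and observe that $(\infty,\infty)\notin V$; without the boundary analysis the properness, connectedness, and genus claims for $V$ are unsupported.
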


\begin{proof}
Using data in table~\ref{tab:cm-points}, we find that the coordinate
$u$ always take distinct values on $P_6$, $P'_2$, and $Q$ over $\mb
Z[1/15]$.  As a result, the isomorphism $u$, from $X^D_{\mb Q}/w_3$ to
$\mb P^1_{\mb Q}$, extends to an isomorphism from $X^D_{\mb Z[1/15]}/w_3
\to \mb P^1_{\mb Z[1/15]}$.

The element $\frac{\sqrt{-7} - 3}{6}$ has minimal polynomial $x^2 + x
+ \frac{4}{9}$, and hence is an algebraic integer over $\mb Z[1/15]$
(so it never coincides with $\infty$ in $\mb P^1$ over this ring).
Moreover, substituting $u=0$ into equation~(\ref{eq:defining}) we also
find that
\[
\left(\tfrac{\sqrt{-7} - 3}{6} - \omega\right)\left(\tfrac{\sqrt{-7} - 3}{6} - \omega^2\right) =
\tfrac{5}{9},
\]
so the element $\frac{\sqrt{-7}-3}{6} - \omega$ is a unit over $\mb
Z[1/15]$.  This shows that $v$ also takes distinct values on $P_6$,
$P'_2$, and $Q$ over $\mb Z[1/15]$.  Therefore, the map $v$ from
$X^D_{\mb Q}/w_{15}$ to $\mb P^1_{\mb Q}$ extends to an isomorphism
from $X^D_{\mb Z[1/15]}/w_{15} \to \mb P^1_{\mb Z[1/15]}$.

This demonstrates that equation~(\ref{eq:defining}) gives a valid
description, over $\mb Z[1/15]$, of $X^D$ as a closed
subscheme of $\mb P^1 \times \mb P^1$.
\end{proof}

\section{Sections of the cotangent bundle}

Let $\kappa_{\cal X}$ be the relative cotangent bundle of ${\cal
  X}^D$, and $\kappa_X$ the (trivial) relative cotangent bundle of the
coarse moduli scheme $X^D$.

As the map ${\cal X}^D \to X^D$ is triply ramified precisely over the
degree-two divisor $P_6$, there is an isomorphism
\begin{equation}
  \label{eq:contangentautomorphic}
H^0({\cal X}^D; \kappa_{\cal X}^{\otimes t}) \cong H^0(X^D; {\cal O}(\lfloor
2t/3 \rfloor P_6)).
\end{equation}

\begin{prop}
There is a section $a_2 \in H^0({\cal X}^D; \kappa_{\cal X})$ which
has double zeros on the divisor $P_6$ and nowhere else.
\end{prop}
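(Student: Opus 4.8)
The plan is to identify $H^0({\cal X}^D; \kappa_{\cal X})$ with a space of sections on the coarse scheme via the isomorphism~(\ref{eq:contangentautomorphic}) with $t = 1$, and then exhibit the desired section explicitly as a differential form on $X^D$. Taking $t = 1$ in~(\ref{eq:contangentautomorphic}) gives $H^0({\cal X}^D; \kappa_{\cal X}) \cong H^0(X^D; {\cal O}(\lfloor 2/3 \rfloor P_6)) = H^0(X^D; {\cal O})$, which is a rank-one free module over $\mb Z[1/15]$ (or $\mb Z_2$). So there is, up to scalar, a unique candidate section $a_2$; the content of the proposition is the precise statement of where it vanishes, namely a double zero exactly along $P_6$ and nowhere else.

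First I would make the identification~(\ref{eq:contangentautomorphic}) concrete near the elliptic points. Since $X^D$ has genus $1$, the relative cotangent bundle $\kappa_X$ is trivial, so a global section is a nowhere-vanishing holomorphic $1$-form $\eta$ on $X^D$, unique up to a unit scalar. The pullback of $\eta$ to ${\cal X}^D$ lives in $\kappa_{\cal X}$, but because ${\cal X}^D \to X^D$ is ramified to order $3$ over $P_6$ (the order-$3$ elliptic locus), in an orbifold chart $z$ at such a point the map to the coarse scheme is $z \mapsto z^3$ and $\eta = d(z^3) \cdot(\text{unit}) = 3z^2\, dz \cdot(\text{unit})$; hence the pulled-back form, viewed as a section of $\kappa_{\cal X}$, acquires a zero of order exactly $2$ at each of the two points of $P_6$ and is nonvanishing everywhere else (the order-$2$ elliptic points in the Atkin–Lehner quotients do not appear on ${\cal X}^D$ itself, only the order-$3$ points do, by the discussion following~(\ref{eq:ellipticvertices}) — the curve ${\cal X}^D$ has exactly the two elliptic points of order $3$). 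Define $a_2$ to be this section. I would use the coordinate description of Proposition~\ref{prop:definingequation}: in the $(u,v)$-model, $P_6$ is the point $u = v = \infty$ with $v$-coordinate the root of unity $\omega$, and one can write $\eta$ as, say, a suitable multiple of $du/(\partial F/\partial v)$ where $F = (u^2+u+1)(v^2+v+1) - 5/9$, then check by a local computation at each listed CM point and at $u=\infty$ that $\eta$ is a global nowhere-zero form on $X^D$, so that its orbifold pullback has the asserted zero divisor $2\,P_6$.

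The main obstacle I anticipate is controlling the behavior at infinity and verifying integrality over $\mb Z[1/15]$ (and hence over $\mb Z_2$): one must check that the chosen $1$-form $\eta$ on the affine curve $F = 0$ extends to a global section of the trivial bundle $\kappa_X$ with no zeros or poles at the points lying over $u = \infty$ or $v = \infty$, using the explicit coordinate changes (the substitutions $t = 4w+1$, $s = 6u+3$, $y = \tfrac{4(2v+1)(w-20)}{3}$) to switch charts and track the form. Once $\eta$ is shown to be a nonvanishing global section of $\kappa_X$ over $\mb Z[1/15]$, the double-zero-on-$P_6$-only statement for its pullback $a_2$ is immediate from the ramification computation above, since $P_6$ is (as noted in Section~5) the \emph{only} place where ${\cal X}^D \to X^D$ is ramified, and that ramification is exactly triple. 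The genus-$1$ input guarantees the form has no zeros on the coarse scheme, which is what prevents $a_2$ from acquiring any unwanted zeros.
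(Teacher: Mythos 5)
Your proposal is correct and takes essentially the same route as the paper: the paper's proof exhibits exactly the $1$-form you describe, namely $\frac{du}{(u^2+u+1)(2v+1)} = \frac{-dv}{(v^2+v+1)(2u+1)}$ (your $du/(\partial F/\partial v)$ up to scalar), observes it is nowhere vanishing so that $\kappa_X$ is trivial, and defines $a_2$ as its pullback, with the double zero along $P_6$ coming from the triple ramification of ${\cal X}^D \to X^D$ there. Your additional remarks on checking the charts at infinity and integrality over $\mb Z[1/15]$ are reasonable elaborations of what the paper leaves implicit.
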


\begin{proof}
There is a nowhere-vanishing 1-form on $X^D$, given by
\[
\frac{du}{(u^2+u+1)(2v+1)} = \frac{-dv}{(v^2+v+1)(2u+1)}.
\]
This exhibits $\kappa_X$ as trivial.  We denote the pullback of this
to ${\cal X}^D$ by $a_2$.  This section $a_2$ has a double pole
along the divisor defining $P_6$ (which should be regarded as a $2/3$
pole).  (Note that multiplication by $a_2^n$ induces the isomorphism
(\ref{eq:contangentautomorphic}).)
\end{proof}

As in \cite[2.16]{shimc}, there is an identification of $\kappa_{\cal
X}$ with the exterior square $\bigwedge^2 \Omega$ of the relative
cotangent bundle at the zero section of the universal abelian scheme
${\cal A} \to {\cal X}^D$.

We will choose an isomorphism $\Lambda \otimes \mb Z_2 \cong M_2(\mb
Z_2)$.  This isomorpism determines, on ${\cal X}^D_{\mb Z_2}$, a
splitting $\Omega \cong \omega \oplus \omega$, and thus an isomorphism
of $\kappa_{\cal X}$ with $\omega^2$.

\begin{cor}
\label{cor:squareobstruction}
There is a 2-torsion element in the Picard group of ${\cal X}^D_{\mb
  Z_2}$ which defines the obstruction to expressing $a_2$ as a unit
times the square of a section $a_1$ of $\omega$.
\end{cor}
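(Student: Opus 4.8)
The plan is to extract the obstruction class directly from the line bundle $\omega$ rather than from $a_2$ itself. We have $\kappa_{\cal X} \cong \omega^{\otimes 2}$ on ${\cal X}^D_{\mb Z_2}$, and by the preceding proposition $\kappa_{\cal X}$ has a global section $a_2$ whose divisor is $2 P_6$ (interpreted on the stack, where $P_6$ is the degree-two reduced divisor along which ${\cal X}^D \to X^D$ is triply ramified). A choice of square root $a_1$ of $a_2$, up to units, would exactly be a section of $\omega$ cutting out the divisor $P_6$; existence of such an $a_1$ is therefore equivalent to the statement that $\omega \cong {\cal O}_{{\cal X}^D_{\mb Z_2}}(P_6)$, or what is the same, that the class $[\omega] - [{\cal O}(P_6)]$ in $\Pic({\cal X}^D_{\mb Z_2})$ is trivial. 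So I would define the obstruction class to be
\[
\epsilon \;=\; [\omega] - [{\cal O}(P_6)] \;\in\; \Pic({\cal X}^D_{\mb Z_2}).
\]

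First I would verify that $\epsilon$ is independent of auxiliary choices and is the right thing: its vanishing is equivalent to the existence of $a_1$ as claimed, because any two sections of a line bundle with the same divisor differ by a global unit, and on the stack ${\cal X}^D_{\mb Z_2}$ (proper over $\mb Z_2$ with geometrically connected fibers) the global units are just $\mb Z_2^\times$, which are all squares up to the harmless factor — more precisely a unit scalar can always be absorbed, so "unit times a square" is exactly the condition $\epsilon = 0$. Second, I would show $2\epsilon = 0$: this is immediate since $2[\omega] = [\kappa_{\cal X}] = [{\cal O}(2P_6)] = 2[{\cal O}(P_6)]$, the middle equality being the proposition on $a_2$. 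Hence $\epsilon$ lies in the $2$-torsion subgroup of $\Pic({\cal X}^D_{\mb Z_2})$, which is what the corollary asserts.

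The one point that needs a little care — and which I expect to be the main obstacle to stating cleanly rather than to proving — is the bookkeeping of $P_6$ as a divisor on the stack versus on the coarse scheme, and the fractional-divisor language used in equation~(\ref{eq:contangentautomorphic}) and in the proof of the previous proposition (the "$2/3$ pole"). I would pin this down by working with the reduced substack $\mathcal P_6 \hookrightarrow {\cal X}^D_{\mb Z_2}$ supported over $P_6$, noting it is an effective Cartier divisor on the stack, and checking on an étale chart that $a_2$ really does vanish to order exactly $2$ along it, so that ${\cal O}(2\mathcal P_6) \cong \kappa_{\cal X}$ canonically via $a_2$. Once that identification is fixed, the argument above is formal: $\epsilon = [\omega] - [{\cal O}(\mathcal P_6)]$ is a well-defined element of $\Pic({\cal X}^D_{\mb Z_2})$, it is killed by $2$, and it vanishes if and only if $a_2$ admits the desired square root $a_1$ up to a unit. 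I would also remark that $\epsilon$ may depend on the chosen isomorphism $\Lambda \otimes \mb Z_2 \cong M_2(\mb Z_2)$ only through the induced splitting $\Omega \cong \omega \oplus \omega$, and that swapping the two summands replaces $\omega$ by the other summand $\omega'$ with $\omega \otimes \omega' \cong \kappa_{\cal X}$; this does not affect $2$-torsion-ness and at worst changes $\epsilon$ by its own negative, i.e. not at all.
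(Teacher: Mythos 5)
Your proposal is correct and follows essentially the same route as the paper: the paper's (very terse) proof likewise uses $a_2$ to identify $\kappa_{\cal X}=\omega^{\otimes 2}$ with ${\cal O}(\pm 2P_6)$ and names the two-torsion class of $\omega(\mp P_6)$ --- your $\epsilon=[\omega]-[{\cal O}(P_6)]$, the same element of $\Pic({\cal X}^D_{\mb Z_2})$ --- as the obstruction. Your write-up just spells out the equivalence with the existence of $a_1$ and the unit bookkeeping in more detail.
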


\begin{proof}
The section $a_2$, with double zeros along the divisor $P_6$,
defines an isomorphism of line bundles ${\cal O}(-2P_6) \cong
\kappa_{\cal X}$, or equivalently an isomorphism ${\cal O} \cong
\kappa_X(2P_6)$.  The two-torsion line bundle $\omega(P_6)$ then
provides precisely this obstruction.
\end{proof}

\section{Double covers}

In this section we will analyze double covers on ${\cal X}^D$.  We
begin with the following.
\begin{prop}
\label{prop:alglevelstruct}
For $p \in \{2,3,5\}$, there exists a nontrivial homomorphism
$\sigma_p\co \Lambda^{N=1} \to \mb Z/2$ obtained by imposing level
structure at the prime $p$.  The effect on the generators of
Corollary~\ref{cor:presentation} is given in
Table~\ref{tab:level-struct}.
\begin{table}[h]
  \centering
\begin{tabular}{c|ccc}
Generator&$\sigma_2$&$\sigma_3$&$\sigma_5$\\
\hline
$h$&      $1$& $0$& $1$\\
$\gamma$& $0$& $1$& $1$\\
$-1$&     $0$& $0$& $1$\\
\end{tabular}
\caption{Images of generators under double covers}
\label{tab:level-struct}
\end{table}
\end{prop}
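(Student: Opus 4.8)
The plan is to construct each homomorphism $\sigma_p$ concretely and then evaluate it on the generators listed in Corollary~\ref{cor:presentation}. Since $\Lambda \otimes \mb Z_p \cong M_2(\mb Z_p)$ for $p \in \{2,3,5\}$ (as $p \nmid 15$... wait, $3$ and $5$ do divide $15$; rather one uses that $D$ is split at these primes appropriately, or one works with the appropriate local order), the norm-$1$ group $\Lambda^{N=1}$ maps to $SL_2(\mb Z_p)$, and from there to $SL_2(\mb F_p)$ by reduction. A full level-$p$ structure on a fake elliptic curve trivializes the $p$-torsion as a $\Lambda$-module, so the obstruction to descending such a structure along the orbifold gives a map to $SL_2(\mb F_p)$; composing with a suitable surjection $SL_2(\mb F_p) \to \mb Z/2$ yields $\sigma_p$. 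Concretely: for $p = 2$, $SL_2(\mb F_2) \cong S_3$ has a unique surjection to $\mb Z/2$ (the sign of the permutation on the three nonzero vectors); for $p = 3$, $SL_2(\mb F_3)$ has abelianization $\mb Z/3$, so this route fails and instead one should use $PSL_2(\mb F_3) \cong A_4$ — rather, the relevant quotient is the map detecting whether an element lies in the commutator, which is not $\mb Z/2$-valued. So for $p = 3$ and $p = 5$ one instead takes the \emph{determinant-type} or \emph{spinor-norm-type} invariant: the composite $\Lambda^{N=1} \to (\Lambda/p\Lambda)^{N=1} \to \mb Z/2$ coming from the quadratic character on $\mb F_p^\times$ applied to a natural invariant of the level structure (e.g. the Weil pairing normalization, which lands in $\mu_p$ but whose square-class is $\mb Z/2$-valued), or more simply the reduction mod the unique index-$2$ subgroup when it exists.

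The cleaner uniform approach, which I would actually carry out, is the following. Each element of $\Lambda^{N=1}$, acting on the finite $\Lambda$-module $A[p]$ of a fake elliptic curve, permutes the set of ``level-$p$ structures''; imposing such a structure defines a connected double (or larger) cover, and $\sigma_p$ is the monodromy character of the $\mb Z/2$-subcover. Equivalently, pick the specific presentation and compute: represent $h = 4 + xy$, $\gamma = 4 + 5\omega^2 - 2y$, and $\omega = \frac{-1+x}{2}$ as explicit matrices in $M_2(\mb Z)$ via an isomorphism $\Lambda \otimes \mb Z_p \cong M_2(\mb Z_p)$, reduce mod $p$, and read off the image in the appropriate $\mb Z/2$-quotient. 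For $p = 2$ I expect $h \mapsto 1$ because $4 + xy$ reduces to something of order $3$... no — one must check: in $SL_2(\mb F_2) = S_3$, order-$3$ elements have sign $+1$, order-$2$ elements sign $-1$; the table asserts $\sigma_2(h) = 1$, so $h$ reduces to a transposition mod $2$, and $\gamma$ to an even permutation, consistent with the relation $\omega^3 = 1$ forcing $\sigma_2(\omega) = 0$ (which is why $\omega$ does not appear in the table as taking a nonzero value — it must, since $\omega$ has order $3$, map to $0$). The existence and nontriviality of $\sigma_p$ then follows because the displayed images are not all zero, and one must check the assignments respect the two defining relations $\omega^3 = 1$ and $(\omega^2 h^{-1}\gamma h \omega^2 \gamma^{-1})^3 = 1$: the first is automatic since $\sigma_p(\omega) = 0$, and the second reduces to $3 \cdot (\text{something}) = 0$ in $\mb Z/2$, hence automatic as well.

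The main obstacle is pinning down precisely which $\mb Z/2$-quotient of the relevant finite group is the ``right'' one — i.e., which invariant of a level-$p$ structure is being detected — and verifying the specific entries of Table~\ref{tab:level-struct} rather than merely the existence of \emph{some} nontrivial $\sigma_p$. For $p = 2$ this is forced (unique index-$2$ subgroup of $S_3$), but for $p = 3$ and $p = 5$, where $SL_2(\mb F_p)$ has multiple or no index-$2$ subgroups, one needs to identify the cover geometrically: for odd $p$ the relevant double cover is the one adjoining a square root of the discriminant of the $p$-torsion field extension, or equivalently the cover classifying \emph{oriented} level structures, whose monodromy is the spinor norm $SL_2(\mb F_p) \to \mb F_p^\times/(\mb F_p^\times)^2 \cong \mb Z/2$. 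Once that identification is made, the computation of $\sigma_p$ on $h$, $\gamma$, and $-1$ is a finite matrix calculation: I would verify $\sigma_3(-1) = 0$ and $\sigma_5(-1) = 1$ by noting $-I \in SL_2(\mb F_p)$ has spinor norm equal to the class of $(-1)\cdot$ (a value depending on $p \bmod 4$), which distinguishes $p = 3$ from $p = 5$ exactly as the table records.
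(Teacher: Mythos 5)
There is a genuine gap, and it is the central one: your construction at the primes $3$ and $5$ is built on the wrong local group. The algebra $D$ has discriminant $15$, so it is \emph{ramified} at $3$ and $5$; the completions $\Lambda \otimes \mb Z_3$ and $\Lambda \otimes \mb Z_5$ are maximal orders in quaternion division algebras over $\mb Q_3$ and $\mb Q_5$, not matrix algebras. You flag this yourself in passing (``wait, $3$ and $5$ do divide $15$'') but then proceed to reduce $\Lambda^{N=1}$ into $SL_2(\mb F_p)$ anyway. That route cannot work even in principle: $SL_2(\mb F_5)$ is perfect and $SL_2(\mb F_3)$ has abelianization $\mb Z/3$, so neither admits a nontrivial homomorphism to $\mb Z/2$, and the ``spinor norm'' you invoke as an escape is an invariant of orthogonal groups, not of $SL_2$; on $SL_2(\mb F_p)$ any such character is forced to be trivial. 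The paper instead reduces modulo the maximal ideal of the local division order: the residue ring is the quadratic field $\mb F_{p^2}$, the reduced norm becomes the field norm, and $\Lambda^{N=1}$ maps to the norm-one subgroup of $\mb F_{p^2}^\times$, which is cyclic of order $p+1$ --- namely $\mb Z/4$ for $p=3$ and $\mb Z/6$ for $p=5$ --- and these do have canonical $\mb Z/2$ quotients. This is also what explains the last row of the table: $-1$ is the unique element of order $2$ in the cyclic group of order $p+1$, which is a square in $\mb Z/4$ (so $\sigma_3(-1)=0$) but not in $\mb Z/6$ (so $\sigma_5(-1)=1$); your heuristic via $-I \in SL_2(\mb F_p)$ and $p \bmod 4$ lands near the right answer but through a group that is not the actual quotient.

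Your $p=2$ construction (reduction to $M_2(\mb F_2)$ followed by the sign of the permutation action, i.e.\ the sign character of $GL_2(\mb F_2)\cong S_3$) does match the paper. But for all three primes the proposal stops short of the actual content of the proposition: the table entries for $h = 4+xy$, $\gamma = 4+5\omega^2-2y$, and $-1$ are never computed --- you only ``expect'' values and note what they would have to be for consistency. The paper's proof is essentially nothing but these finite computations: explicit ring maps $\Lambda \to M_2(\mb Z/2)$, $\Lambda \to \mb F_9$ ($\omega\mapsto 1$, $y\mapsto\sqrt{-1}$), and $\Lambda\to\mb F_{25}$ ($\omega\mapsto$ a cube root of unity, $y\mapsto 0$), followed by evaluating the generators. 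Without fixing the local model at the ramified primes, those evaluations cannot even be set up. (A minor additional point: your check that the relation $(\omega^2 h^{-1}\gamma h\omega^2\gamma^{-1})^3=1$ is respected should rest on the abelianized word being $4\sigma_p(\omega)=0$, not on ``$3\cdot(\text{something})=0$ in $\mb Z/2$,'' since multiplication by $3$ is the identity there.)
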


\begin{proof}
We first consider the ring map $\Lambda \to M_2(\mb Z/2)$, given by
\[
\omega \mapsto
\begin{bmatrix}
0 & 1 \\
1 & 1
\end{bmatrix},\ 
y \mapsto
\begin{bmatrix}
0 & 1 \\
1 & 0
\end{bmatrix}.
\]
In particular, $x = 2\omega + 1$ maps to $1$.  Under this map,
\[
h \mapsto
\begin{bmatrix}
0 & 1 \\
1 & 0
\end{bmatrix},\ 
\gamma \mapsto
\begin{bmatrix}
1 & 1 \\
1 & 0
\end{bmatrix},\ 
-1 \mapsto
\begin{bmatrix}
1 & 0 \\
0 & 1
\end{bmatrix}.\ 
\]
The composite homomorphism
\[
\sigma_2\co \Lambda^{N=1} \to \GL_2(\mb Z/2) \to \{\pm 1\},
\]
which sends a matrix to the sign of its permutation action on
$(\mb Z/2)^2$, is then trivial on $\gamma$ and $-1$ and nontrivial on
$h$.

We next have a ring homomorphism $\Lambda \to \mb F_9$ sending
$\omega$ to $1$ and $y$ to an element which is a square root of $-1$,
generating $\mb F_9$ over $\mb F_3$.  Under this map,
\[
h \mapsto 1,\ 
\gamma \mapsto \sqrt{-1},\ 
-1 \mapsto -1.
\]
The composite map
\[
\sigma_3\co \Lambda^{N=1} \to (\mb F_9)^{N=1} \cong \mb Z/4
\to \mb Z/2
\]
is trivial on $h$ and $-1$, and nontrivial on $\gamma$.

Finally, we have a ring homomorphism $\Lambda \to \mb F_{25}$, sending
$\omega$ to a third root of unity and $y$ to zero.  Under this map,
\[
h \mapsto -1,\ 
\gamma \mapsto -1,\ 
-1 \mapsto -1.
\]
The composite map
\[
\sigma_5\co \Lambda^{N=1} \to (\mb F_{25})^{N=1} \cong \mb Z/6 \to \mb Z/2
\]
is nontrivial on all three generators.
\end{proof}

This shows that all elements of $H^1({\cal X}^D_{\mb C};\mb Z/2)$ come
from some combination of these three algebraically defined level
structures.

\begin{prop}
\label{prop:complexdoublecover}
The character $\sigma_3 \sigma_5$ induces a double cover ${\cal Y_{\mb
    C}} \to {\cal X}^D_{\mb C}$, obtained by imposing level structure
away from the prime $2$, such that on ${\cal Y}_{\mb C}$ the form
$a_2$ has a square root.
\end{prop}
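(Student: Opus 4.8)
The plan is to identify the double cover $\mathcal{Y}$ cut out by the obstruction bundle $L$ of Corollary~\ref{cor:squareobstruction} with one of the level‑structure covers of Proposition~\ref{prop:alglevelstruct}, and to check that the one that occurs is ramified only at $3$ and $5$.

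For the reduction: $L$ is a line bundle on $\mathcal{X}^D_{\mathbb{Z}_2}$ with $\omega\cong\mathcal{O}(P_6)\otimes L$, and on any scheme over $\mathcal{X}^D_{\mathbb{Z}_2}$ on which $L$ is trivialized the section $a_2$ becomes a unit times the square of a section of $\omega$. Let $\mathcal{Y}\to\mathcal{X}^D_{\mathbb{Z}_2}$ be the double cover associated to $L$; it trivializes $L$, and since $\mathcal{X}^D_{\mathbb{C}}$ is proper the resulting unit is a nonzero constant, hence a square, so $a_2$ acquires an honest square root on $\mathcal{Y}_{\mathbb{C}}$. Using $\mathcal{X}^D_{\mathbb{C}}=[\mathcal{H}/\Lambda^{N=1}]$ with $\mathcal{H}$ contractible, the $2$-torsion line bundles on $\mathcal{X}^D_{\mathbb{C}}$ are classified by $H^1(\mathcal{X}^D_{\mathbb{C}};\mathbb{Z}/2)=\Hom(\Lambda^{N=1},\mathbb{Z}/2)$, which by Proposition~\ref{prop:alglevelstruct} is $(\mathbb{Z}/2)^3$ with basis $\sigma_2,\sigma_3,\sigma_5$. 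So the statement reduces to showing that the $\sigma_5$-coordinate of $[L_{\mathbb{C}}]$ is nonzero (so $[L_{\mathbb{C}}]\ne 0$) and its $\sigma_2$-coordinate is zero; for then $\mathcal{Y}$ is a nontrivial double cover obtained by imposing level structure only at $3$ and $5$, and in particular is étale over $\mathbb{Z}_2$ since $3,5\in\mathbb{Z}_2^\times$.

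To read off those two coordinates, pull back to $\mathcal{H}$. There $\omega$ is trivial with automorphy factor $j(g,z)=cz+d$, and $a_2$ becomes $\psi(z)\,dz$ for a weight-$2$ automorphic form $\psi$ whose only zeros are double zeros at the two $\Lambda^{N=1}$-orbits of elliptic points. Fixing a holomorphic branch $\sqrt\psi$ (possible, as the zeros are even-order and $\mathcal{H}$ is simply connected) gives a section of $\omega$ squaring to $a_2$, and $[L_{\mathbb{C}}]$ is the sign character $\chi\co\Lambda^{N=1}\to\{\pm1\}$ determined by $\sqrt{\psi(gz)}=\chi(g)\,j(g,z)\,\sqrt{\psi(z)}$. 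Since $-1$ acts trivially on $\mathcal{H}$ but as $[-1]$ on the universal abelian scheme, hence by $-1$ on $\omega$, we get $\chi(-1)=-1$, which by Table~\ref{tab:level-struct} is the $\sigma_5$-coordinate --- nonzero. For the homothety $h\co z\mapsto(4+\sqrt{15})^2z$, its matrix is $\mathrm{diag}(4+\sqrt{15},(4+\sqrt{15})^{-1})$, so $j(h,z)=(4+\sqrt{15})^{-1}$ and $\psi(hz)=(4+\sqrt{15})^{-2}\psi(z)$; I then continue $\sqrt\psi$ along the imaginary axis from a point $z_0$ between $v_5=i$ and $v_2=hi$ to $hz_0$. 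The fundamental domain is symmetric about the imaginary axis, which is the diagonal $\overline{v_5v_2}$ of the hexagon tessellated by $\langle h\rangle$; since $X^D$ and $a_2$ are defined over $\mathbb{Q}$, this confines $\psi$ to a fixed line through $0$ along that axis, whose only zeros are the $h$-translates of $v_5$, each a double zero (no point of the second orbit lies on the axis). Hence the path from $z_0$ to $hz_0$ crosses exactly one double zero, and because $\psi$ stays on a line through the origin the continuous square root flips sign exactly once, giving $\sqrt{\psi(hz_0)}=-(4+\sqrt{15})^{-1}\sqrt{\psi(z_0)}$, i.e.\ $\chi(h)=-1$. Combining $\chi(-1)=-1$ and $\chi(h)=-1$ with Table~\ref{tab:level-struct} ($\sigma_2(h)=1$, $\sigma_3(h)=0$, $\sigma_5(h)=1$) forces the $\sigma_2$-coordinate of $[L_{\mathbb{C}}]$ to be zero, so $[L_{\mathbb{C}}]\in\{\sigma_5,\sigma_3\sigma_5\}$ and $\mathcal{Y}$ is as required (evaluating $\chi$ on $\gamma$ along its axis by the same argument would say which of the two).

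The main obstacle is the sign $\chi(h)=-1$. Everything turns on this single monodromy count, which relies on two things: using the reflection symmetry of the fundamental domain to confine $\psi$ to a line through the origin along the imaginary axis, so that its even-order zeros can only contribute sign changes and never rotation; and pinning down the orientation and branch conventions in $j(g,z)$ and $\sqrt\psi$ so that exactly one sign change is counted. A slip here would flip the conclusion, since it is precisely the vanishing of the $\sigma_2$-coordinate that lets $\mathcal{Y}$ avoid the prime $2$.
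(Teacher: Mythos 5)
Your proposal is correct and follows essentially the same route as the paper: both determine the sign character of $\Lambda^{N=1}$ on a global branch $a_1=\sqrt{a_2}$, extract $\chi(-1)=-1$ from the weight, and extract $\chi(h)=-1$ from the real structure confining $a_1$ to a line on the imaginary axis together with the simple zeros of $a_1$ at the $h$-translates of $i$, then read off from Table~\ref{tab:level-struct} that the character lies in $\{\sigma_5,\sigma_3\sigma_5\}$. The only cosmetic difference is that you phrase the sign computation as a monodromy count for $\sqrt{\psi}$ across the double zero at $hi$ rather than as a sign change of the real-analytic function $t\mapsto a_1(it)$, which amounts to the same calculation.
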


\begin{proof}
Using the embedding $\mb Z_2 \to \mb C$, the section $a_2$ of
$\omega^2$ is an automorphic form of weight $2$.  We represent $a_2$
by a holomorphic function $a_2(z)$ on the upper half-plane ${\cal
H}$, with double zeros at the elliptic points.  It has a square
root $a_1(z)$, which is almost an automorphic form of weight $1$.  The
group $\Lambda^{N=1}$ acts on $a_1$ by a character $\Lambda^{N=1} \to
\{\pm 1\}$.

The {\em real} structure $z \mapsto -\bar z$ is compatible with the
action of $\Lambda^{N=1}$, and so every automorphic form $f(z)$ has a
conjugate $\bar f(z) = \overline {f(-\bar z)}$.  As $a_1(z)$ and $\bar
a_1(z)$ have the same zeros (concentrated at the elliptic points),
they differ by a complex scalar; by rescaling we may assume that
$a_1(z)$ takes real values on the imaginary axis.

We note that $(-1)$ must act nontrivially on $a_1$, as it acts by
negation on all forms of weight $1$.  Therefore,
Table~\ref{tab:level-struct} tells us that the character of
$\Lambda^{N=1}$ acting on $a_1$ is in the set $\{\sigma_5, \sigma_2
\sigma_5, \sigma_3 \sigma_5, \sigma_2 \sigma_3 \sigma_5\}$.

The element $h$ sends $a_1(z)$ to $a_1((4 + \sqrt{15})^2 z) /
(4-\sqrt{15})$, which must be $\pm a_1(z)$.  The function $t \mapsto
a_1(it)$ is an analytic function on the positive real line, taking
real values, with zeros only at the points $(4+\sqrt{15})^{2k}i$.  In
addition, these are simple zeros, which forces $a_1(it)$ to change
signs.  Thus $h(a_1(z)) = -a_1(z)$.

By Table~\ref{tab:level-struct}, this further reduces the
possibilities for the character of $\Lambda^{N=1}$ acting on $a_1$ to
the set $\{\sigma_3 \sigma_5, \sigma_5\}$.

The Eichler-Selberg trace formula \cite{miyake} shows that the ring
endomorphism associated to the Atkin-Lehner involution $w_5$ has trace
$5$ on the collection of forms of weight $2$, which is one-dimensional
and generated by $a_2(z)$.  Therefore, $w_5$ must act on $a_1(z)$ by
sending it to $\pm \sqrt{5} a_1(z)$.  However, we have the identity $5
\gamma = \tilde w_5^2$ in $\Lambda$.  This shows that $\gamma$ acts on
$a_1(z)$ trivially, fixing the character as $\sigma_3 \sigma_5$.
This character can thus be obtained by imposing level structure away
from the prime $2$.
\end{proof}

\begin{defn}
Let $K$ denote the nontrivial unramified extension of $\mb Q_2$ of
degree $4$, with ring of integers ${\cal O}_K = W(\mb F_{16})$.
\end{defn}

\begin{prop}
After choosing a complex embedding $K \to \mb C$, there is an
\'etale double cover ${\cal Y} \to {\cal X}^D_{{\cal O}_K}$ over
$\Spec({\cal O}_K)$ inducing the double cover ${\cal Y}_{\mb C} \to
{\cal X}^D_{\mb C}$. The composite ${\cal Y} \to {\cal X}^D_{\mb Z_2}$
is a Galois cover with Galois group $\mb Z/8$.
\end{prop}

\begin{proof}
The \'etale fundamental groups fit into the following diagram of
groups with exact rows:
\[
\xymatrix{
1 \ar[r] &
\comp{\pi_1({\cal X}^D_\mb C)} \ar[r] \ar@{>>}[d] &
\pi_1^{et}({\cal X}^D_{\mb Z_2}) \ar[r] \ar[d] &
\pi_1^{et}(\mb Z_2) \ar[r] \ar[d]^{\chi} &
1 \\
1 \ar[r] &
(\Lambda/(xy))^{N=1} \ar[r] &
(\Lambda/(xy))^\times \ar[r] &
(\mb Z/15)^\times \ar[r] &
1
}
\]
Here $\chi$ is the cyclotomic character, and the image of $\chi$ is
generated by the element $2$---the image of the Frobenius
automorphism. The images of the top row in the bottom row 
then fit into a commutative diagram as follows.
\[
\xymatrix{
1 \ar[r] &
(\mb F_9)^{N=1} \times (\mb F_{25})^{N=1} \ar[r] \ar@{>>}[d]^{\sigma_3 \sigma_5} &
Im(\pi_1^{et}({\cal X}^D_{\mb Z_2})) \ar[r] \ar@{>>}[d] &
\langle 2 \rangle \ar[r] \ar[d]^\sim & 1\\
1 \ar[r] &
\mb Z/2 \ar[r] &
G \ar[r] &
\mb Z/4 \ar[r] &
1.
}
\]
This explicitly determines an index-$8$ subgroup of $\pi_1^{et}({\cal
  X}^D_{\mb Z_2})$ with quotient $G$, determining a composite
degree-$8$ cover
\[
{\cal Y} \to {\cal X}^D_{{\cal O}_K} \to {\cal X}^D_{\mb Z_2}.
\]
Moreover, the pullback of ${\cal Y}$ along ${\cal X}^D_{\mb C} \to
{\cal X}^D_{{\cal O}_K}$ is the double cover determined by the character
$\sigma_3 \sigma_5$, which is therefore ${\cal Y}_{\mb C}$.

We then verify directly (using the fact that $\mb F_9^\times$ and $\mb
F_{25}^\times$ are cyclic) that the Frobenius element lifts to a
generator of $G$, so that $G \cong \mb Z/8$.
\end{proof}

\begin{rmk}
As the element $-1$ has nontrivial image in $\{\pm 1\}$ under the
character $\sigma_3 \sigma_5$, the double cover ${\cal Y}$ has the
same geometric points as ${\cal X}^D_{\mb Z_2}$ and ${\cal X}^D_{{\cal
    O}_K}$.
\end{rmk}

\begin{rmk}
\label{rmk:torsionorbits}
The recipe in this proof allows us to describe the stack ${\cal Y}$ as
follows. There is a degree-192 cover ${\cal X}^D_{\mb Z_2}(\sqrt{15})
\to {\cal X}^D_{\mb Z_2}$ parametrizing $2$-dimensional abelian
schemes $A$ with $\Lambda$-action, together with a choice of primitive
torsion point for $xy \in \Lambda$; the group $(\Lambda/(xy))^\times$
is a Galois group for this cover, acting by the $\Lambda$-action on
the torsion point. There are two components for the cover, each
stabilized by $Im(\pi_1^{et}({\cal X}^D_{\mb Z_2}))$. The stabilizer
group of one such component then contains an index-eight subgroup $H
\subset (\Lambda/(xy))^\times$, and ${\cal Y}$ is the stack-theoretic
quotient of this component by $H$.
\end{rmk}
\begin{prop}
\label{prop:integraldoublecover}
On the double cover ${\cal Y} \to {\cal X}^D_{{\cal O}_K}$ of
Proposition~\ref{prop:complexdoublecover}, an ${\cal O}_K$-unit times
the restriction of $a_2$ has a square root.
\end{prop}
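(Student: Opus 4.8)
The plan is to recast the statement as the vanishing of a specific $2$-torsion line bundle on ${\cal Y}$ and then prove that vanishing by a base-change argument, exploiting that $2\nmid 15$ so that $X^D$ (hence ${\cal Y}$) has good reduction at $2$. By Corollary~\ref{cor:squareobstruction}, the obstruction to writing $a_2$ as a $\mb Z_2$-unit times the square of a section of $\omega$ over ${\cal X}^D_{\mb Z_2}$ is the $2$-torsion class $\epsilon=[\omega(P_6)]\in\Pic({\cal X}^D_{\mb Z_2})$. The map ${\cal Y}\to{\cal X}^D_{\mb Z_2}$ is finite \'etale, being built from level structure away from $2$, and by the remark just preceding this proposition ${\cal Y}$ has the same geometric points as ${\cal X}^D_{\mb Z_2}$; hence ${\cal Y}$ is proper and flat over $\mb Z_2$ with connected coarse space $X^D_{\mb Z_2}$, so in particular it has good reduction at $2$. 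The identical argument over ${\cal Y}$ shows that the obstruction to the desired conclusion there is precisely the pullback $\epsilon_{\cal Y}\in\Pic({\cal Y})$, again a $2$-torsion class, so it suffices to show $\epsilon_{\cal Y}=0$.

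Now Proposition~\ref{prop:complexdoublecover} is exactly the statement that $\epsilon_{\cal Y}$ dies after base change along the chosen embedding $\mb Z_2\to\mb C$, so the whole proof reduces to injectivity of $\Pic({\cal Y})\to\Pic({\cal Y}_{\mb C})$ (and only its restriction to $2$-torsion is needed). I would factor this as
\[
\Pic({\cal Y})\longrightarrow\Pic({\cal Y}_{\mb Q_2})\longrightarrow\Pic({\cal Y}_{\overline{\mb Q}_2})\longrightarrow\Pic({\cal Y}_{\mb C}).
\]
The first map is injective because ${\cal Y}$ is smooth (hence regular), proper and flat over $\mb Z_2$ with irreducible special fibre ${\cal Y}_{\mb F_2}$, its coarse space $X^D_{\mb F_2}$ being irreducible: a rational section trivializing a line bundle on the generic fibre has divisor supported on the prime divisor ${\cal Y}_{\mb F_2}=\divi 2$, hence equals $2^n$ times a global generating section. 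The second map is injective by Hilbert's Theorem~90, since ${\cal Y}_{\mb Q_2}$ is proper and geometrically connected so that $H^0({\cal O}^\times)=\overline{\mb Q}_2^\times$ and $H^1(G_{\mb Q_2},\overline{\mb Q}_2^\times)=0$; and the third by faithfully flat base change along $\overline{\mb Q}_2\to\mb C$.

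Granting this, $\epsilon_{\cal Y}=0$, so there is a section $a_1\in H^0({\cal Y},\omega)$ whose square has the same divisor as $a_2$; since ${\cal Y}$ is proper, flat and geometrically connected over $\mb Z_2$, one has $H^0({\cal Y},{\cal O}^\times)=\mb Z_2^\times$, and therefore $a_1^2$ is a $\mb Z_2$-unit times $a_2$, which is the assertion. The main obstacle is the first injectivity $\Pic({\cal Y})\hookrightarrow\Pic({\cal Y}_{\mb Q_2})$: one must check that the Weil-divisor bookkeeping is legitimate on the Deligne--Mumford stack ${\cal Y}$ --- it is, ${\cal Y}$ being smooth over the regular base $\mb Z_2$, so that Cartier and Weil divisors agree and the reduced special fibre is a prime divisor cut out by $2$ --- and that ${\cal Y}$ genuinely has connected good reduction, which rests on its being finite \'etale over ${\cal X}^D_{\mb Z_2}$ with the same geometric points.
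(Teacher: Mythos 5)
Your argument is correct, but it takes a genuinely different route from the paper's. You descend the triviality of the $2$-torsion class $[\omega(P_6)]$ along the chain $\Pic({\cal Y})\hookrightarrow\Pic({\cal Y}_{\mb Q_2})\hookrightarrow\Pic({\cal Y}_{\overline{\mb Q}_2})\hookrightarrow\Pic({\cal Y}_{\mb C})$, using the reduced irreducible special fibre, Hilbert's Theorem 90, and invariance of $\Pic$ under extension of algebraically closed fields. The paper instead stays at the level of double covers: it uses the Kummer sequence $0\to\mu_2\to\mb G_m\to\mb G_m\to 0$ over $\mb Q_2$ together with the inflation--restriction sequence for $\pi_1^{et}({\cal X}^D_{\mb Q_2})$ to show that the kernel of $H^1({\cal X}^D_{\mb Q_2};\mb Z/2)\to H^1({\cal X}^D_{\mb C};\mb Z/2)$ is exactly $\mb Q_2^\times/(\mb Q_2^\times)^2$, so the complex vanishing of Proposition~\ref{prop:complexdoublecover} yields a weight-one form on ${\cal Y}_{\mb Q_2}$ with $a_1^2=ua_2$ for some $\mb Q_2$-unit $u$; it then passes from $\mb Q_2$ to $\mb Z_2$ by observing that $ua_2$ has only an even-order zero or pole along the divisor $(2)$ and rescaling $a_1$ by a power of $2$. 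Your Hilbert-90 kernel $H^1(G_{\mb Q_2},\overline{\mb Q}_2^\times)=0$ is the same group-cohomological input the paper repackages as the $\mb Q_2^\times/(\mb Q_2^\times)^2$ ambiguity absorbed into the unit, and your first injectivity is the paper's rescaling step in Picard-theoretic dress; what your version buys is a cleaner conceptual statement (descent of triviality of a torsion line bundle), at the cost of more care with the stacky Picard group. On that score, two points deserve explicit attention: a line bundle on a Deligne--Mumford stack need not admit any nonzero rational section (e.g.\ $\omega$ on ${\cal X}^D$ itself, where the generic automorphism $-1$ acts by $-1$ on fibres), so your Weil-divisor step must use the rational section furnished by the generic trivialization, as you implicitly do; and both the irreducibility of ${\cal Y}_{\mb F_2}$ and the identity $H^0({\cal Y},{\cal O}^\times)=\mb Z_2^\times$ rest on geometric connectedness of the fibres of ${\cal Y}$, which holds precisely because $-1$ has nontrivial image under the classifying character $\sigma$ --- the remark preceding the proposition should be cited for connectedness of the cover, not only for the identification of its coarse space.
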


\begin{proof}
The \'etale fundamental group of ${\cal Y}_{K}$ fits into an exact sequence
\[
1 \to \comp{\pi_1({\cal Y}_{\mb C})} \to \pi_1^{et}({\cal Y}_{K})
\to Gal(\overline{\mb Q}_2/K) \to 1,
\]
which gives an exact sequence in cohomology
\begin{equation}
  \label{eq:infres}
0 \to K^\times/(K^\times)^2 \to H^1({\cal Y}_{K};\mb Z/2)\to
H^1({\cal Y}_{\mb C}; \mb Z/2).
\end{equation}
There is an isomorphism $\mb Z/2 \to \mu_2$ of \'etale sheaves on
${\cal Y}_{K}$, and so the \'etale cohomology group
$\Hom(\pi_1^{et}({\cal Y}_{L}), \mb Z/2)$ is the same as the
\'etale cohomology group $H^1({\cal Y}_{K}; \mu_2)$
classifying $2$-torsion line bundles.

The 2-torsion line bundle $\omega(P_6)$ of
Corollary~\ref{cor:squareobstruction}, with chosen trivialization
$a_2$ of its square, is classified by an element $\theta \in H^1({\cal
  X}^D_{K};\mu_2)$. By
Proposition~\ref{prop:complexdoublecover}, the image of $\theta$ in
$H^1({\cal Y}_{\mb C}; \mb Z/2)$ vanishes.  The exact
sequence~(\ref{eq:infres}) then shows that there is a form $a_1$ of
weight one on ${\cal Y}_{K}$ such that $a_1^2 = u a_2$ for a
$K$-unit $u$.

Twice the divisor associated to $a_1$ is the pullback to ${\cal Y}$ of
the divisor associated to $u a_2$.  As ${\cal Y} \to {\cal X}^D_{\mb
  Z_2}$ is unramified over $2$, $u a_2$ can only have an even-degree
zero or pole over the divisor $(2)$ of ${\cal Y}$, and no other poles.
We can therefore rescale $a_1$ by a multiple of $2$ to lift it to a
form defined on ${\cal Y}$ whose square is a ${\cal O}_K$-unit times
$a_2$.
\end{proof}

\section{Automorphic forms on the cover}

Let ${\cal Y}$ be the double cover of ${\cal X}^D_{{\cal O}_K}$ from the
previous section, with section $a_1 \in H^0({\cal Y}; \omega)$ with
simple zeros at the points lying over $P_6$.  The coarse moduli scheme
of ${\cal Y}$ is $X^D_{{\cal O}_K}$.

On ${\cal Y}$, the map taking a section $s$ to $s/a_1^t$ gives an
identification of sections of $\omega^{\otimes t}$ with elements in
the fraction field of $X^D_{{\cal O}_K}$ with poles of degree at most
$\lfloor 2t/3\rfloor$ along the divisor $\{u = \infty\}$ defining
$P_6$.  In this section, we will determine this graded ring.

\begin{prop}
The graded ring
\begin{equation}
  \label{eq:automorphicforms}
\bigoplus_{t=0}^\infty H^0(X^D_{{\cal O}_K}; {\cal O}(\lfloor 2t/3 \rfloor P_6))
\end{equation}
is isomorphic to a graded ring
\[
{\cal O}_K[a_1, a_3, a_6] / f(a_i)
\]
Here $a_1$ has degree $1$, $a_3$ has degree $3$, $a_6$ has degree $6$,
and
\[
f(a_i) = a_6^2 + a_6(a_1^6 + a_1^3 a_3 + a_3^2) + (a_1^6 + a_1^3 a_3 + a_3^2)^2 - \tfrac{5}{9}a_1^6 (a_1^6 + a_1^3 a_3 + a_3^2).
\]
\end{prop}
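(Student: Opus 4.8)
The plan is to compute the graded ring $\bigoplus_t H^0(X^D; \mathcal O(\lfloor 2t/3\rfloor P_6))$ directly, using the explicit model of $X^D$ as the closed subscheme of $\mathbb P^1 \times \mathbb P^1$ cut out by equation~(\ref{eq:defining}), together with the identification (on the double cover $\mathcal Y$) of weight-$t$ forms with rational functions on $X^D_{\mb Z_2}$ having arbitrary poles along $(3)$ and $(5)$ and poles of order at most $\lfloor 2t/3 \rfloor$ along the divisor $P_6 = \{u=\infty\}$. So the first step is to understand the divisor of the coordinate $u$ (and of $v$, and of $2v+1$) on $X^D$: the point $P_6$ sits at $u=\infty$, $v=\omega$, and $u=\infty$ is a degree-$2$ point of $X^D$ (the two values $v=\omega,\omega^2$), so $u$ has a double pole there in the coarse sense but this is the $2/3$-pole after passing to $\mathcal X^D$; I need to pin down where $u$, $v$, $2v+1$ vanish and their other poles in terms of the elliptic points and the defining equation.

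The second step is to build the three generators. The form $a_1 \in H^0(\mathcal Y; \omega)$ with simple zeros over $P_6$ is given. To get $a_3$, I want a weight-$3$ form, i.e.\ (after dividing by $a_1^3$) a rational function with a pole of order at most $\lfloor 2 \rfloor = 2$ along $P_6$ — a candidate is something like $u a_1^3$ suitably normalized, since $u$ has a pole of order (in the $2/3$-normalization) exactly $2$ at $P_6$; similarly $a_6$ of weight $6$ corresponds to a function with a pole of order at most $4$ along $P_6$, and from the defining equation the natural object carrying the relation is built from $u^2$ and $v$. Concretely I expect $a_1^6 + a_1^3 a_3 + a_3^2$ to be (a unit times) $a_1^6 (u^2 + u + 1)$ and $a_6$ to be essentially $a_1^6 (u^2+u+1) v$ (or $-\tfrac12 a_1^6(u^2+u+1)(2v+1)$), so that the quadratic $f(a_6)$ is nothing but equation~(\ref{eq:defining}), $(u^2+u+1)(v^2+v+1) = \tfrac59$, rewritten in these coordinates after clearing denominators by the appropriate power of $a_1$. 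The key algebraic check is that, writing $b = a_1^6 + a_1^3 a_3 + a_3^2$, the relation $a_6^2 + a_6 b + b^2 - \tfrac59 a_1^6 b = 0$ is exactly $b \cdot \big( (v^2 + v + 1) \cdot (\text{something}) - \tfrac59 a_1^6 \big)$ matched against the defining equation; one must verify the coefficient $\tfrac59$ and the power $a_1^6$ come out correctly, which is a finite computation with the substitutions from Proposition~\ref{prop:definingequation}.

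The third and final step is to prove the map $\mb Z[1/15, a_1, a_3, a_6]/f(a_6) \to \bigoplus_t H^0$ is an isomorphism. Surjectivity: show every weight-$t$ graded piece is spanned by monomials $a_1^i a_3^j a_6^\epsilon$ with $i + 3j + 6\epsilon = t$ and $\epsilon \in \{0,1\}$ (using $f$ to reduce $a_6^2$); this is a Riemann–Roch / dimension count — $h^0(X^D; \mathcal O(\lfloor 2t/3\rfloor P_6))$ on the genus-$1$ curve $X^D$ with $\deg(\lfloor 2t/3\rfloor P_6) = 2\lfloor 2t/3\rfloor$ is $\max(2\lfloor 2t/3 \rfloor, 1)$ for $t$ large, and I count that the proposed monomials exactly match this (this is where the $\lfloor 2t/3\rfloor$ floor and the degree-$2$-ness of $P_6$ interact — the subtle bookkeeping modulo $3$). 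Injectivity: the source is a free $\mb Z[1/15, a_1, a_3]$-module of rank $2$ on $\{1, a_6\}$, and one checks no monomial relation is killed, again by comparing Hilbert series over $\mb Q$ and integrality over $\mb Z[1/15]$ (no $2,3,5$ in denominators after the normalization, which is the point of choosing these particular coordinates in Proposition~\ref{prop:definingequation}). \textbf{The main obstacle} I anticipate is the surjectivity/Hilbert-series bookkeeping: verifying that the monomials $a_1^i a_3^j$ and $a_1^i a_3^j a_6$ of each weight $t$ are linearly independent and span the full Riemann–Roch space — in particular handling the low-weight cases ($t = 0, 1, 2, 3, 4, 5$) where the floor function behaves irregularly and Riemann–Roch is not yet in its stable range — and confirming that the relation $f$ is the \emph{only} relation, so that the Hilbert series of the quotient ring matches $\sum_t h^0(X^D; \mathcal O(\lfloor 2t/3\rfloor P_6)) q^t$ term by term.
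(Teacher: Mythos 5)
Your identification of the generators and the relation is exactly right, and matches the paper: $a_3 = u\,a_1^3$, $a_6 = (1+u+u^2)v\,a_1^6$, so that $a_1^6 + a_1^3a_3 + a_3^2 = a_1^6(1+u+u^2)$ and $f(a_6)=0$ is precisely equation~(\ref{eq:defining}) after multiplying through by $a_1^{12}(1+u+u^2)$. Where you diverge from the paper is the spanning argument: the paper does not use Riemann--Roch at all, but instead directly determines which elements $a(u)+b(u)v$ of the function field (\ref{eq:fractionfield}) are regular on the two affine charts of $\mb P^1\times\mb P^1$ over $\mb Z[1/15]$ (forcing $a,b\in\mb Z[1/15,u]$ and $(1+u+u^2)\mid b$), and then reads off the pole order at $P_6$ as the total degree in $u$. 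That chart-by-chart computation is also what pins down the integral structure; your Riemann--Roch count only controls dimensions over $\mb Q$, and your appeal to ``integrality over $\mb Z[1/15]$'' is a gesture rather than an argument. Some version of the explicit regularity analysis is needed to know the monomials span the $\mb Z[1/15]$-lattice and not merely the $\mb Q$-vector space.

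The concrete gap is in the step you flag as the main obstacle and then assert without checking: the Hilbert-series match fails with the numbers as you have written them. With $P_6$ of degree $2$ on the genus-one curve, $h^0(X^D;{\cal O}(\lfloor 2t/3\rfloor P_6)) = 2\lfloor 2t/3\rfloor$ once this is positive, which gives $4$ in degree $t=3$ and $8$ in degree $t=6$; but the monomials $a_1^ia_3^ja_6^\epsilon$ of those degrees number $2$ and $4$ respectively. The correct normalization for the grading in which $a_1$ sits in degree $1$ (the $\omega$-weight) is a pole bound of $\lfloor t/3\rfloor$, not $\lfloor 2t/3\rfloor$ --- the latter is the exponent appropriate to $\kappa^{\otimes t}=\omega^{\otimes 2t}$ as in (\ref{eq:contangentautomorphic}), and the statement's indexing conflates the two. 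With the bound $\lfloor t/3\rfloor$ the count $2\lfloor t/3\rfloor$ (for $t\ge 3$, and $1$ for $t=0,1,2$) does match the monomial basis $\{a_1^ia_3^j,\ a_1^ia_3^ja_6\}$ in each degree, and your argument closes over $\mb Q$. As written, however, the verification you defer is the one that fails, so you need to sort out the weight/pole-order bookkeeping (e.g.\ by the local orbifold computation at the order-$3$ elliptic points, where $\mu_3$ acts on the fiber of $\omega$ by the square root $\zeta^{-1}$ of its action on $\kappa$) before the dimension count can do the work you are asking of it.
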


\begin{proof}
From equation~(\ref{eq:defining}), the fraction field of $X^D_K$
is
\begin{equation}
  \label{eq:fractionfield}
K(X^D) = K(u)[v] / (u^2+u+1)(v^2+v+1) - \tfrac{5}{9}.
\end{equation}
A generic element of this field is $a(u) + b(u) v$.  We find that this
has no poles in the coordinate chart $u,v \neq \infty$ only when
$a(u)$ and $b(u)$ are in ${\cal O}_K[u]$, and then has no poles in
the coordinate chart $u \neq \infty, v \neq 0$ only when $(1 + u +
u^2)$ divides $b(u)$.  Finally, the degree of the pole of $a(u) +
b'(u) (1+u+u^2)v$ over $P_6$ is the total degree in $u$.

As a consequence, the ring of equation~(\ref{eq:automorphicforms}) is
the graded ring generated by three elements, of weights $1$, $3$, and
$6$:
\begin{align*}
a_1 & & a_3 &= u a_1^3 &a_6 &= (1+u+u^2)va_1^6
\end{align*}

The defining relation satisfied by $u$ and $v$, after multiplying by
$a_1^{12} (1 + u + u^2)$, becomes the minimial polynomial for $a_6$:
\[
a_6^2 + a_6(a_1^6 + a_1^3 a_3 + a_3^2) + (a_1^6 + a_1^3 a_3 + a_3^2)^2
= \tfrac{5}{9}a_1^6 (a_1^6 + a_1^3 a_3 + a_3^2)
\]
This gives a presentation of the ring of automorphic forms as desired.
\end{proof}

\begin{cor}
\label{cor:coefficientring}
The $\mb Z/8$-action on the graded ring of automorphic forms
$H^0({\cal Y}; \omega^{\otimes t})$ is determined as follows. On the
scalars ${\cal O}_K$ this action factors through the quotient $\mb Z/8
\twoheadrightarrow Gal(K/\mb Q_2)$, and the generator of $\Bbb Z/8$
sends $a_k$ to $y^k \cdot a_k$ for an element $y \in {\cal
  O}_K$ with norm $-1$ which is congruent to $1$ mod $2$.
\end{cor}

\begin{proof}
Since the group $H^0({\cal Y}, \omega)$ is isomorphic to ${\cal O}_K$
with generator $a_1$, the generator $\sigma$ of $\Bbb Z/8$ sends $a_1$
to $y \cdot a_1$ for some element $y \in {\cal O}_K^\times$. The element
$\sigma^4$ acts on $a_1$ by negation, which is equivalent to
$N_{K/\mb Q}(y) = -1$.

In $\mb F_{16} = {\cal O}_K/(2)$, Hilbert's theorem 90 implies that $y
\equiv \sigma w / w$ for some unit $w$. By rescaling forms in degree
$k$ by $w^k$, without loss of generality we may assume that $y$
reduces to $1$ in $\mb F_{16}$.

As $a_3 = u a_1^3$ and $a_6 = (1 + u + u^2) v a_1^6$ were obtained by
multiplying $a_1^k$ by $Gal(K/\mb Q_2)$-invariant rational functions on
$X^D_{\mb Q_2}$, we obtain the desired action on the remainder of the
ring.
\end{proof}

\section{Cohomology of ${\cal X}^D_{\mb Z_2}$}

In this section we will exploit the ring of automorphic forms from
Corollary~(\ref{cor:coefficientring}) to determine the cohomology of
${\cal X}^D_{\mb Z_2}$ with coefficients in the tensor powers of
$\omega$.

We first note that ${\cal Y}$ satisfies a type of Serre duality.
\begin{prop}
The cup product creates a perfect pairing
\[
H^s({\cal Y}; \omega^{\otimes t}) \otimes H^{1-s}({\cal Y};
\omega^{\otimes(2-t)}) \to H^1({\cal Y}; \omega^{\otimes 2}) \cong \mb
{\cal O}_K.
\]
\end{prop}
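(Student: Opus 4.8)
The strategy is to reduce this to classical Serre duality on the coarse moduli scheme $X^D_{\mb Z_2}$, which is a smooth proper curve of genus $1$ over $\mb Z_2$, and then track how the orbifold structure at $P_6$ and the splitting $\kappa_{\cal X}\cong\omega^{\otimes 2}$ interact with the dualizing sheaf. First I would recall from equation~(\ref{eq:contangentautomorphic}) and the surrounding discussion that, because ${\cal Y}\to X^D_{\mb Z_2}$ is triply ramified precisely over the degree-two divisor $P_6$ and is an isomorphism on coarse spaces, there is an identification
\[
H^s({\cal Y};\omega^{\otimes t}) \cong H^s(X^D_{\mb Z_2};{\cal O}(\lfloor 2t/3\rfloor P_6)),
\]
the point being that pushforward along the coarse map is exact and $\omega^{\otimes 2}=\kappa_{\cal X}$ pushes forward to ${\cal O}(2P_6)$ twisted appropriately. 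Under this dictionary, the asserted pairing becomes the statement that cup product gives a perfect pairing
\[
H^s(X^D_{\mb Z_2};{\cal O}(\lfloor 2t/3\rfloor P_6))\otimes H^{1-s}(X^D_{\mb Z_2};{\cal O}(\lfloor 2(2-t)/3\rfloor P_6)) \to H^1(X^D_{\mb Z_2};{\cal O}(2P_6)).
\]

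Next I would invoke coherent Serre duality for the smooth proper curve $X^D_{\mb Z_2}$ over the Gorenstein base $\mb Z_2$: since $X^D$ has genus $1$, its relative dualizing sheaf $\omega_{X^D/\mb Z_2}$ is trivial (this is exactly the trivialization $\kappa_X$ by the nowhere-vanishing $1$-form $du/((u^2+u+1)(2v+1))$ exhibited earlier). Serre duality then gives that cup product into $H^1(X^D_{\mb Z_2};\omega_{X^D/\mb Z_2})\cong\mb Z_2$ is a perfect pairing between $H^s(X^D_{\mb Z_2};{\cal L})$ and $H^{1-s}(X^D_{\mb Z_2};{\cal L}^{-1})$ for any line bundle ${\cal L}$, \emph{provided} the cohomology groups are finitely generated projective $\mb Z_2$-modules so that "perfect pairing" is an unambiguous notion over $\mb Z_2$ rather than just over $\mb Q_2$ and the residue field. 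The key numerical check is that $\lfloor 2t/3\rfloor + \lfloor 2(2-t)/3\rfloor$ equals either $2$ or, in the sporadic residue classes of $t$ modulo $3$, differs from $2$ in a way compensated by the $\lfloor\cdot\rfloor$—more precisely one verifies $\lfloor 2t/3\rfloor P_6$ and $2P_6 - \lfloor 2(2-t)/3\rfloor P_6$ define isomorphic line bundles on ${\cal X}^D$ (not necessarily on $X^D$) because of the $2/3$-integral nature of the pole at $P_6$, which is precisely why one must work on ${\cal Y}$ or ${\cal X}^D$ rather than on the coarse scheme. I would carry this out by the case analysis $t\equiv 0,1,2\pmod 3$, using that $3P_6$ on ${\cal X}^D$ is the pullback of a point on $X^D$ while $P_6$ itself is the ramification divisor.

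The main obstacle is integrality: ordinary Serre duality is cleanest over a field, and here the target is $\mb Z_2$, so I need the cohomology modules $H^*({\cal Y};\omega^{\otimes t})$ to be flat over $\mb Z_2$. This follows from base change plus the fact that $R^0$ and $R^1$ of a line bundle on a genus-one curve over a field have dimensions whose sum is governed by Riemann–Roch with no jumping—concretely, from Corollary~\ref{cor:coefficientring} the ring $R_*=\bigoplus H^0$ is a free $\mb Z_2$-module in each degree (it is $\mb Z_2[a_1,a_3,a_6]/f(a_6)$, a free module of rank determined by the $a_6$-degree), and then $H^1$ is identified by duality with $R_*$ in complementary degrees, hence also free; alternatively one checks directly that multiplication by $2$ is injective on $H^1$ using the long exact sequence associated to $0\to{\cal O}\to{\cal O}\to{\cal O}/2\to 0$ together with the known mod-$2$ reduction of the curve. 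Once flatness is in hand, the perfectness of the pairing over $\mb Z_2$ follows from perfectness after tensoring with $\mb Q_2$ (classical Serre duality over a field) together with perfectness after reducing mod $2$ (Serre duality for the curve over $\mb F_2$, which is smooth since $2\nmid 15$), via the standard criterion that a pairing of finite free $\mb Z_2$-modules is perfect iff it is so modulo the maximal ideal. I expect the edge cases in the $t\pmod 3$ analysis—where $\lfloor 2t/3\rfloor+\lfloor 2(2-t)/3\rfloor=1$ rather than $2$—to require the extra observation that the "missing" section is absorbed by the ramification, i.e.\ that ${\cal O}_{{\cal X}^D}(P_6)$ is exactly the $2$-torsion bundle $\omega(P_6)$ of Corollary~\ref{cor:squareobstruction} twisted suitably, so no information is lost.
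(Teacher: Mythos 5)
Your route---push everything down to the coarse genus-one curve $X^D_{\mb Z_2}$, whose dualizing sheaf is trivialized by the explicit $1$-form, and invoke coherent Serre duality there---is genuinely different from the paper's argument and is viable, but as written it contains a concrete bookkeeping error that sits exactly at the step you flag as the ``key numerical check.'' The target of your pairing, $H^1(X^D_{\mb Z_2};{\cal O}(2P_6))$, is \emph{zero}: $2P_6$ has degree $4$ on a genus-one curve, so this group cannot receive a perfect pairing onto $\mb Z_2$. Relatedly, $\lfloor 2t/3\rfloor+\lfloor 2(2-t)/3\rfloor$ equals $0$ or $1$, never $2$, so the two divisors you pair are not complementary for the twist you chose. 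The repair is to get the pushforward right: on ${\cal Y}$ the section $a_1$ of $\omega$ has a \emph{simple} zero along the stacky divisor $P_6$, and ${\cal Y}\to X^D$ is triply ramified there, so $\pi_*\omega^{\otimes t}\cong{\cal O}(\lfloor t/3\rfloor P_6)$ rather than ${\cal O}(\lfloor 2t/3\rfloor P_6)$ (this is consistent with the ring of Corollary~\ref{cor:coefficientring}, whose rank in weight $t\ge 6$ is $2\lfloor t/3\rfloor=h^0({\cal O}(\lfloor t/3\rfloor P_6))$, and with $\pi_*\omega^{\otimes 2}\cong{\cal O}$ so that $H^1({\cal Y};\omega^{\otimes 2})\cong\mb Z_2$ as required). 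With that correction one checks $\lfloor t/3\rfloor+\lfloor(2-t)/3\rfloor=0$ for \emph{every} $t$, the two pushed-forward bundles are exactly ${\cal O}(D)$ and ${\cal O}(-D)$, and classical Serre duality with trivial canonical applies with no residue-class case analysis at all; your anticipated ``sporadic'' cases disappear. One further caution: you cannot establish freeness of $H^1$ by appealing to the duality you are in the middle of proving; but your alternative (constancy of $h^0$, $h^1$ on the two fibers plus cohomology and base change, or injectivity of multiplication by $2$ on $H^1$) is sound and suffices to make ``perfect over $\mb Z_2$'' meaningful.

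For comparison, the paper avoids duality machinery entirely: it computes $H^*({\cal Y};\omega^{\otimes *})$ from the Mayer--Vietoris sequence for the affine cover by $a_1^{-1}{\cal Y}$ and $a_3^{-1}{\cal Y}$, obtaining $H^1$ as the explicit cokernel of $a_1^{-1}R_*\oplus a_3^{-1}R_*\to(a_1a_3)^{-1}R_*$, and then exhibits the dual basis directly: the class of $D=(a_1a_3)^{-1}a_6$ generates $H^1$ in weight $2$, and $a_1^ka_3^la_6^\epsilon$ pairs with the image of $a_1^{-1-k}a_3^{-1-l}a_6^{1-\epsilon}$. That computation delivers freeness over $\mb Z_2$ and the explicit generator of $H^1$ (both used later in the homotopy fixed-point calculations) as byproducts, whereas your argument, once corrected, is shorter on duality but would still leave the explicit basis to be extracted.
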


\begin{proof}
The stack ${\cal Y}$ has a cover by the open substacks $a_1^{-1}
{\cal Y}$ and $a_3^{-1} {\cal Y}$, each of which has an affine scheme
as its coarse moduli object and whose higher cohomology vanishes
because all points have automorphism groups of odd order. In terms of
the graded ring of Corollary~\ref{cor:coefficientring}, the
Mayer-Vietoris sequence then degenerates to an exact sequence
\[
0 \to R_* \to a_1^{-1} R_* \oplus a_3^{-1} R_* \to (a_1 a_3)^{-1}
R_* \to H^1({\cal Y}; \omega^{\otimes *}) \to 0.
\]
In particular, the element $D = (a_1 a_3)^{-1} a_6$ maps to a
generating element in $H^1$, which makes the monomial basis $a_1^k
a_3^l a_6^\epsilon$ of $H^0$ dual to the basis elements which are the
image of $(a_1)^{-1-k} (a_3)^{-1-l} (a_6)^{1-\epsilon}$.
\end{proof}

We now employ the cohomology spectral sequence
\[
H^p(\mb Z/8; H^q({\cal Y}; \omega^{\otimes t})) \Rightarrow
H^{p+q}({\cal X}^D_{\mb Z_2}; \omega^{\otimes t}).
\]
This spectral sequence is concentrated either in $q=0$ or $q=1$,
according to the value of $t$.  Therefore, this collapses to the group
cohomology, with terms having degrees shifted according to whether
they originate in $H^0({\cal Y})$ or $H^1({\cal Y})$.

\begin{thm}
Consider the bigraded ring
\[
H^s({\cal Y}; \omega^{\otimes t}) [\zeta] /
  (2\zeta),
\]
where $\zeta$ is in degree $(s,t) = (1,0)$.  This ring has an
additional $\mb Z/2$-grading: $a_1$, $a_3$, and $\zeta$ have odd
grading, while $a_6$ and $D$ have even grading.

The cohomology of $\mb Z/2 \subset \mb Z/8$ with with coefficients in
$H^s({\cal Y}; \omega^{\otimes t})$ is the subgroup of elements in
bidegree $(s,t)$ of even grading, and the cohomology of $\mb Z/8$ with
with coefficients in $H^s({\cal Y}; \omega^{\otimes t})$ consists of
those elements invariant under the map $x \mapsto y^t \sigma(x)$, where $y$ is
the element of norm $-1$ from Corollary~\ref{cor:coefficientring} and
$\sigma$ is a generator of the Galois group.
\end{thm}

\begin{proof}
The calculation of the cohomology of $\mb Z/2$ is standard. In
addition, because the extension $\mb Z_2 \to {\cal O}_K$ is unramified
the terms in the spectral sequence
\[
H^p(\mb Z/4; H^q(\mb Z/2; H^*({\cal Y}; \omega^{\otimes t})))
  \Rightarrow H^{p+q}(\mb Z/8; H^*({\cal Y}; \omega^{\otimes t}))
\]
vanish for $p > 0$. Therefore,
\[
H^q(\mb Z/8; H^*({\cal Y}; \omega^{\otimes t})) \cong 
H^q(\mb Z/2; H^*({\cal Y}; \omega^{\otimes t}))^{Gal(K/\mb Q_2)},
\]
as desired.
\end{proof}

As $y$ is congruent to $1$ mod $2$, elements involving a positive
power of $\zeta$ are invariant under the action of $Gal(K/\mb Q_2)$
if and only if their coefficients come from $\mb F_2 \subset \mb
F_{16}$.

As $y^2$ has norm $1$, Hilbert's theorem 90 implies that there exists
an element $x \in {\cal O}_K^\times$ with $y^2 = x / \sigma(x)$; the
element $x$ must also reduce to $1$ in $\mb F_{16}$. We formally
define $b_i = x^{i/2} a_i$.

Then the elements $b_i$ generate a ring $\mb
Z_2[b_1,b_3,b_6]/(f(b_i))$. The ring $H^0(\mb Z/8; H^0({\cal
  Y};\omega^{\otimes t}))$ is the subring consisting of those elements
in even total weight. Similarly, the module $H^0(\mb Z/8; H^1({\cal
  Y};\omega^{\otimes t}))$ is, as a consequence of the Mayer-Vietoris
sequence, generated by elements of the form $(b_1)^{-1-j} (b_3)^{-1-k}
b_6^{1-\epsilon}$ in even total weight.

\section{The Atkin-Lehner involution}

The following is an expansion on what appears in \cite[5.6]{shimc}.
\begin{prop}
\label{prop:atkinlehnerlift}
The $2$-divisible group on ${\cal X}^D$ descends to one on the
quotient stack ${\cal X}^D/w_{15}$.
\end{prop}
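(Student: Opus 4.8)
The plan is to descend the $2$-divisible group $\mathcal{A}[2^\infty]$ along the degree-two map $\mathcal{X}^D \to \mathcal{X}^D/w_{15}$. The key obstruction is that $w_{15}$ is realized on $\mathcal{X}^D$ only by the element $\tilde w_{15} = 5x + 2xy$ of norm $15$, which does not normalize $\Lambda$ on the nose and whose square is $15$ times a norm-$1$ element; so to get an honest $\mathbb{Z}/2$-action on the $2$-divisible group (and not merely a projective one) we must exploit the hypothesis in the statement of \ref{prop:atkinlehnerlift}, namely that $-15$ admits a square root in $\mathbb{Z}_2$. First I would recall, as in \cite[Section~2.6]{shimc}, that over $\mathbb{Z}_2$ the action of $\Lambda \otimes \mathbb{Z}_2 \cong M_2(\mathbb{Z}_2)$ splits the $2$-divisible group as $\mathcal{A}[2^\infty] \cong G \oplus G$ for a $1$-dimensional summand $G$ of height $2$, which is the datum actually being transported.

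Next I would produce the descent datum. The element $\tilde w_{15}$ induces an isogeny $\mathcal{A} \to w_{15}^* \mathcal{A}$ of degree a power of $15$; on $2$-divisible groups, since $15$ is a $2$-adic unit, this isogeny is an isomorphism $\phi\colon \mathcal{A}[2^\infty] \xrightarrow{\sim} w_{15}^*\mathcal{A}[2^\infty]$. The composite $w_{15}^*\phi \circ \phi$ is then multiplication by (a unit times) $\tilde w_{15}^2$, which acts on $\mathcal{A}[2^\infty]$ as the image of $\tilde w_{15}^2 = 15 \cdot (\text{norm-1 unit})$ in $\Lambda \otimes \mathbb{Z}_2$. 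I would check that this scalar is, up to an automorphism already present, multiplication by $-15$ on one summand and by $-15$ (equivalently, using the splitting, a fixed $2$-adic unit scalar) overall; this is where a choice of $\sqrt{-15} \in \mathbb{Z}_2$ enters, allowing us to renormalize $\phi$ to $\tilde\phi = \sqrt{-15}^{-1}\phi$ so that $w_{15}^*\tilde\phi \circ \tilde\phi = \mathrm{id}$. That renormalized $\tilde\phi$ is precisely an effective descent datum for $\mathcal{A}[2^\infty]$ (or its summand $G$) along the $\mathbb{Z}/2$-cover $\mathcal{X}^D \to \mathcal{X}^D/w_{15}$.

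Finally, I would invoke fppf descent for $2$-divisible groups: since $\mathcal{X}^D \to \mathcal{X}^D/w_{15}$ is finite locally free (indeed an fppf cover, being the quotient by the finite group $\mathbb{Z}/2$), a $2$-divisible group with descent datum descends to a $2$-divisible group on the quotient stack, whose pullback is the original. The $1$-dimensionality of $G$ and the splitting $\mathcal{A}[2^\infty] \cong G\oplus G$ are preserved because the descent datum was chosen to commute with the $\Lambda\otimes\mathbb{Z}_2$-action used to produce the splitting (this is automatic since $\tilde w_{15}$ commutes with $\Lambda$ up to the relevant conjugation, which is absorbed into the chosen identification).

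The main obstacle is the bookkeeping in the middle step: verifying that $w_{15}^*\phi \circ \phi$ is genuinely multiplication by $-15$ (and not some other norm-$15$ element whose $2$-adic square root is obstructed) requires tracking the conjugation by $\tilde w_{15}$ on $\Lambda\otimes\mathbb{Z}_2$ against the fixed isomorphism $\Lambda\otimes\mathbb{Z}_2\cong M_2(\mathbb{Z}_2)$, and confirming that the residual unit is a square or is absorbed by an automorphism of $G$. Once that scalar is pinned down, the existence of $\sqrt{-15}$ in $\mathbb{Z}_2$ does the rest, and the descent is formal.
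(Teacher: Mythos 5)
Your approach is the paper's approach: realize $w_{15}$ by an explicit element of $\Lambda$ of reduced norm $\pm 15$, note that the induced isogeny has degree prime to $2$ and hence becomes invertible on $2$-divisible groups, renormalize by $1/\sqrt{-15}$, and descend along the degree-two cover. The paper packages the descent step as a direct extension of the functor $A \mapsto A[2^\infty]$ to the explicit groupoid presentation of ${\cal X}^D/w_{15}$, assigning to each extra morphism $w_{15}A \to B$ the composite through the isogeny divided by $\sqrt{-15}$; this is the same content as your descent datum, so no genuinely different route is being taken.

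The step you flag as the main obstacle closes immediately, with none of the bookkeeping against the identification $\Lambda \otimes \mb Z_2 \cong M_2(\mb Z_2)$ that you anticipate: your element $\tilde w_{15} = 5x + 2xy$ has reduced trace $0$ and reduced norm $15$, so its minimal polynomial over the center gives $\tilde w_{15}^2 = -\Nm(\tilde w_{15}) = -15$ exactly, as a \emph{central} element of $\Lambda$. Hence $w_{15}^*\phi \circ \phi$ is literally multiplication by $-15$ on $A[2^\infty]$ --- there is no residual unit to absorb into an automorphism of $G$ --- and $\tilde\phi = \sqrt{-15}^{-1}\phi$ squares to the identity on the nose. (The paper instead uses the element $xy$, which has reduced norm $-15$ and satisfies $(xy)^2 = +15$; a trace-zero representative of norm $+15$ such as yours is the one for which the scalar comes out to $-15$ with no further adjustment, so if anything your choice makes the verification the paper calls ``straightforward'' more transparent.) Two minor corrections: $\tilde w_{15}$ lies in $\Lambda$ and \emph{does} normalize it, since elements whose norm divides the discriminant normalize a maximal order --- the relevant point is only that it is not a unit, so $w_{15}$ is realized by an isogeny rather than an automorphism; and the splitting $A[2^\infty] \cong G \oplus G$ plays no role in this proposition, which descends the full two-dimensional $2$-divisible group, though your observation that the descent datum is compatible with the $\Lambda \otimes \mb Z_2$-action is exactly what is needed later to split the descended object.
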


\begin{proof}
We first recall the definition of the quotient stack.

For a scheme $Y$, the $Y$-points ${\cal X}^D(Y)$ form the category of
$2$-dimensional abelian schemes $A$ over $Y$ with an action of
$\Lambda$, and morphisms are $\Lambda$-linear isomorphisms.  The
endofunctor $w_{15}$ sends $A$ to $w_{15}(A)$, which has the same
underlying abelian scheme; the action is precomposed with the
automorphism of $\Lambda$ given by conjugation by $xy \in D$.  (Note
that the $\Lambda$-linear map $xy\co A \to w_{15}(A)$ factors through
an isomorphism $A/A[xy] \to w_{15}(A)$, between $w_{15}(A)$ and the
quotient of $A/A[xy]$ by the subgroup of points annihilated by $xy$.)
We have $(w_{15})^2 = Id$.  The stack ${\cal X}^D/w_{15}$ is obtained
by stackifying the functor whose $Y$-points form a groupoid with
the same objects as those of ${\cal X}^D$; the set of morphisms from
$A$ to $B$ is the disjoint union of the set of isomorphisms $A \to B$
and the set of isomorphisms $w_{15} A \to B$.

We choose a $2$-adic square root of $-15$.  The $2$-divisible group of
${\cal X}^D$ takes a $Y$-point $A$ to the $2$-divisible group
$A[2^\infty]$.  To extend this $2$-divisible group to ${\cal
  X}^D/w_{15}$, for an isomorphism $w_{15} A \to B$ we assign
$\frac{1}{\sqrt{-15}}$ times the composite isomorphism
\[
A[2^\infty] \longoverto^{xy} (w_{15} A)[2^\infty] \to B[2^\infty].
\]
To verify that this preserves composition, we must check that it
commutes with $w_{15}$, which is straightforward.
\end{proof}

\begin{prop}
\label{prop:w15-action}
The involution on the ring of automorphic forms induced by $w_{15}$
acts trivially on the generators $b_1^2$, $b_1 b_3$, and $b_3^2$, and
sends $b_6$ to $-b_6-(b_1^6+b_1^3b_3+b_3^2)$.
\end{prop}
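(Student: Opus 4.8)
The plan is to pin down the $w_{15}$-action by combining three ingredients: (i) the transformation rules of the coordinates $u,v$ and of the nowhere-vanishing $1$-form $\omega_0:=\tfrac{du}{(u^2+u+1)(2v+1)}$ under the Atkin-Lehner involutions (Proposition~\ref{prop:definingequation}); (ii) the equalities $a_3=ua_1^3$, $a_6=(1+u+u^2)va_1^6$, and $a_1^2=c\,a_2$ for a $\mb Z_2$-unit $c$, where $a_2$ is the pullback of $\omega_0$ (Proposition~\ref{prop:integraldoublecover} and the preceding section); and (iii) the normalisation by $\sqrt{-15}$ built into the lift of Proposition~\ref{prop:atkinlehnerlift}.

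The first and most delicate step is to turn (iii) into a clean statement. The lift of $w_{15}$ is built from the element $xy\in D$, which has reduced norm $\Nm(xy)=-15$ and satisfies $(xy)^2=15$, together with the scalar $\tfrac1{\sqrt{-15}}$. Because $(xy)^2$ is the \emph{positive} integer $15$, the normalisation making $w_{15}$ an honest geometric involution would use a square root of $15$ on $\Omega$; the $2$-adic lift uses $\sqrt{-15}$ instead, so the two differ on $\Omega$ by the scalar $\sqrt{15}/\sqrt{-15}$, hence on a weight-$2k$ form (a section of $\omega^{\otimes 2k}=(\bigwedge^2\Omega)^{\otimes k}$) by $(\sqrt{15}/\sqrt{-15})^{2k}=(-1)^k$. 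I would then check that, once this scalar is stripped off, the operator on the coarse curve $X^D$ is pullback along the Atkin-Lehner involution $w_3$, not along $w_{15}$ --- the interchange of the two copies of $\omega$ in $\Omega$ induced by $xy$ is what replaces $w_{15}=w_3w_5$ by $w_3$ here. In summary: $w_{15}$ acts on a weight-$2k$ form by $(-1)^k$ times pullback by the geometric involution $w_3$, which fixes $u$ and $1+u+u^2$, sends $v\mapsto -1-v$, and sends $\omega_0\mapsto -\omega_0$ (since it sends $2v+1\mapsto-(2v+1)$), so in particular $w_3^*(a_1^{2k})=(-1)^ka_1^{2k}$.

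Granting this, the formulas drop out. With $P:=a_1^6+a_1^3a_3+a_3^2=(1+u+u^2)a_1^6$ one computes
\begin{align*}
w_{15}(a_1^2)&=(-1)^1\cdot(-a_1^2)=a_1^2,\\
w_{15}(a_1a_3)&=(-1)^2\cdot u a_1^4=a_1a_3,\\
w_{15}(a_3^2)&=(-1)^3\cdot(-u^2a_1^6)=a_3^2,\\
w_{15}(a_6)&=(-1)^3\cdot(1+u+u^2)(-1-v)(-a_1^6)=-a_6-P,
\end{align*}
which are the asserted formulas. For the last one there is also a proof that avoids the explicit twist: $w_{15}$ fixes $a_1^2,a_1a_3,a_3^2$ and hence the coefficients of $f$, so $w_{15}(a_6)$ is again a root of $f$; the two roots of $f$ sum to $-P$, and $w_{15}$ is a nontrivial involution (e.g.\ $X^D$ and $X^D/w_{15}$ have different genera), so $w_{15}(a_6)$ must be the second root $-a_6-P$.

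The hard part is the first step: getting both the sign $(-1)^k$ and the replacement of $w_{15}$ by $w_3$ right. It is precisely this replacement that forces $a_1^2,a_1a_3,a_3^2$ to be fixed --- the unmodified geometric $w_{15}$, which fixes $v$ and sends $u\mapsto -1-u$, would instead send $a_1a_3=ua_1^4$ to $-a_1^4-a_1a_3$.
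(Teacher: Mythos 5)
Your final formulas are correct and your arithmetic checks out, but the whole argument hangs on the step you yourself flag as ``the hard part,'' and that step is asserted rather than proved --- and, as stated, it cannot be proved, because it is false. The stack involution $(A,\iota)\mapsto(A,\iota\circ c_{xy})$ induces on the coarse curve $X^D$ the geometric Atkin--Lehner involution $w_{15}$: the scalar $\tfrac{1}{\sqrt{-15}}$ in Proposition~\ref{prop:atkinlehnerlift} only twists the action on $\omega$ and cannot alter the underlying map of coarse moduli, and ``interchanging the two copies of $\omega$ inside $\Omega$'' is likewise invisible downstairs. There is no mechanism replacing $w_{15}$ by $w_3$. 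What actually rescues your computation is that the final sentence of Proposition~\ref{prop:definingequation} has the roles of $w_3$ and $w_{15}$ interchanged relative to the rest of the paper: since $\tilde w_3=x$ fixes the elliptic point $i$, the involution $w_3$ fixes the two points of $P_6$, which by Table~\ref{tab:cm-points} are $(u,v)=(\infty,\omega)$ and $(\infty,\omega^2)$; the rule ``$u\mapsto u$, $v\mapsto-1-v$'' swaps these two points, while ``$u\mapsto-1-u$, $v\mapsto v$'' fixes them. (The branch loci say the same thing: the double cover $t=-3s^2$ of $(X^D)^*$ is branched over $P_2$ and $P_6$, the images of the fixed locus of $w_3$, so the $s$- and $u$-lines are $X^D/w_{15}$ rather than $X^D/w_3$.) With the corrected labels the geometric $w_{15}$ already fixes $u$ and sends $v$ to $-1-v$, which is exactly what your formulas need; your replacement of $w_{15}$ by $w_3$ is a second error cancelling the first. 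Even granting the right underlying involution, you would still owe an argument for the scalar: you need to know that the operator normalized by $\sqrt{15}$ agrees with pullback of forms along the geometric involution with no residual sign, and the heuristic comparison of $(xy)^2=15$ with $\Nm(xy)=-15$ does not settle that normalization.

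The paper's proof avoids all of this. It feeds the Eichler--Selberg trace formula the operator $w_{15}$, obtaining trace $-15$ in weight $2$ and $2(-15)^k$ in weight $2k>2$; rescaling by $(\sqrt{-15})^{-2k}$ gives traces $1$ and $2$. Since the weight-$2$ and weight-$4$ spaces are $1$- and $2$-dimensional and the involution has eigenvalues $\pm1$, these traces force $a_1^2$, $a_1^4$, and $a_1a_3$ to be fixed; $a_3^2$ follows because $a_1^2$ is not a zero divisor; and the weight-$6$ trace plus the minimal-polynomial observation (which is essentially your fallback argument for $a_6$, and is fine once the lower-weight generators are known to be fixed) finishes the proof. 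If you want to keep your coordinate approach, the honest route is to first determine the action of the geometric $w_{15}$ on $(u,v)$ from the fixed-point data in Table~\ref{tab:cm-points}, and then pin down the weight-$2$ scalar by an independent argument such as the trace formula.
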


\begin{proof}
We can apply the Eichler-Selberg trace formula (as in
\cite[Section~3.5]{shimc}) to determine the action of the Atkin-Lehner
operators on the ring of automorphic forms.  It shows that the ring
endomorphism associated to $w_{15}$ has trace $-15$ in degree $2$, and
$2(-15)^{k}$ in degrees $2k > 2$.

The induced involution which gives rise to the $2$-divisible group on
the quotient stack ${\cal X}^D/w_{15}$ rescales the action on forms in
weight $k$ by $({\sqrt{-15}})^{-k}$; it therefore has trace $1$ in
degree $2$ and $2$ in higher degrees.

The form $b_1^2$ generates all forms of weight $2$, and is thus
fixed; the forms $b_1^4$ and $b_1 b_3$ generate all forms of weight
$4$, and are fixed as well.  Since $b_3^2 b_1^2 = (b_1 b_3)^2$ and
$b_1^2$ is not a zero divisor, $b_3^2$ must also be fixed.

As the forms $b_1^6$, $b_3 b_1^3$, and $b_3^2$ are all fixed, the
remaining generator $b_6$ in degree $6$ must be sent to $-b_6$ plus a
fixed element.  The result must satisfy the same minimal polynomial
as $b_6$, so it is forced to map to the conjugate root
$-b_6-(b_1^6+b_1^3b_3+b_3^2)$ as desired.
\end{proof}

\begin{prop}
The involution $w_{15}$ lifts to an involution on the double cover
${\cal Y}$, along with its $2$-divisible group, that fixes the forms
$a_1$ and $a_3$ and commutes with the $\mb Z/2$-action on ${\cal Y}$.
\end{prop}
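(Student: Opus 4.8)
The plan is to lift $w_{15}$ through the double cover ${\cal Y} \to {\cal X}^D_{\mb Z_2}$ by analyzing how $w_{15}$ interacts with the character $\sigma_3\sigma_5$ (equivalently, the line bundle $\omega(P_6)$) which defines the cover. First I would recall from Proposition~\ref{prop:complexdoublecover} and the remark following it that the double cover ${\cal Y}$ is classified by the level-structure homomorphism $\sigma_3\sigma_5\co\Lambda^{N=1}\to\mb Z/2$, and that $a_1$ is the form whose square is a $\mb Z_2$-unit times $a_2$, with the $\mb Z/2$-deck action negating $a_1$. Since $w_{15}$ acts on ${\cal X}^D$ and on the ring of automorphic forms with the effect recorded in Proposition~\ref{prop:w15-action} (fixing $a_1^2$, $a_1a_3$, $a_3^2$), the pullback of the line bundle cut out by $a_1$ along $w_{15}$ is again a square root of (a unit times) $a_2$. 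I would argue that $w_{15}$ therefore permutes the two square roots $\pm a_1$ of the section $a_1^2$, i.e. it lifts to ${\cal Y}$ up to composition with the deck transformation; one of the two lifts fixes $a_1$ (hence also $a_3 = $ the appropriate product, checking signs via $a_1a_3$ fixed), and this is the lift we take.

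The key technical point is to check that $w_{15}$ actually does lift — that the obstruction vanishes — rather than only lifting after some Galois twist. Concretely, the cover ${\cal Y}$ is classified by an element of $H^1({\cal X}^D_{\mb Z_2};\mb Z/2)$, and $w_{15}$ lifts precisely when this class is $w_{15}$-invariant. The class is $\sigma_3\sigma_5$ (or $\sigma_5$); by Proposition~\ref{prop:alglevelstruct} this is detected by imposing level structure at $3$ and $5$, and the Atkin-Lehner operator $w_{15}$ acts on prime-to-$15$ level structures — in particular it normalizes the level structures at $3$ and $5$ used to build $\sigma_3$ and $\sigma_5$ (conjugation by $xy$ preserves the relevant quotient construction $\Lambda\to M_2(\mb Z/2)$, $\Lambda\to\mb F_9$, $\Lambda\to\mb F_{25}$ up to an inner automorphism, which does not change the sign/norm characters). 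Hence the class defining ${\cal Y}$ is $w_{15}$-fixed and the lift exists. I expect verifying this invariance — tracking exactly what conjugation by $xy$ does to the three ring maps of Proposition~\ref{prop:alglevelstruct} and confirming the induced characters are unchanged — to be the main obstacle, though it is essentially a finite computation.

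Once the lift of $w_{15}$ to ${\cal Y}$ is established, I would carry the $2$-divisible group along for free: the $2$-divisible group on ${\cal Y}$ is pulled back from ${\cal X}^D_{\mb Z_2}$, and Proposition~\ref{prop:atkinlehnerlift} already provides the $w_{15}$-equivariant structure downstairs (using the chosen $2$-adic square root of $-15$), so it pulls back to a $w_{15}$-equivariant structure on the $2$-divisible group of ${\cal Y}$. Finally, commutation with the $\mb Z/2$-deck action: the deck group and the chosen $w_{15}$-lift both sit over operations on ${\cal X}^D_{\mb Z_2}$ that commute (the deck action covers the identity, $w_{15}$ covers $w_{15}$), so the only possible discrepancy is whether $w_{15}\circ\tau$ versus $\tau\circ w_{15}$ agree, where $\tau$ is the deck involution; both are lifts of $w_{15}$ and so differ at most by $\tau$, and evaluating on $a_1$ (fixed by our chosen lift, negated by $\tau$) shows they coincide — giving a genuine $\mb Z/2\times\mb Z/2$ (or dihedral, but here abelian) action with $w_{15}$ fixing $a_1$ and $a_3$, as claimed.
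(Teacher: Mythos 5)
Your proposal is correct and follows essentially the same route as the paper: the crux in both is that conjugation by $xy$ preserves the prime-to-$2$ level structure defining ${\cal Y}$ (the paper phrases this directly on moduli points $(A,[p]) \mapsto (w_{15}A,[p])$, you phrase it as $w_{15}$-invariance of the classifying class in $H^1({\cal X}^D_{\mb Z_2};\mb Z/2)$, but the verification is the same finite check), and the endgame --- compose with the deck transformation to fix $a_1$, then use that $a_1a_3$ is fixed and $a_1$ is not a zero divisor to fix $a_3$ --- is identical. Your handling of the $2$-divisible group and of commutation with the deck action is also consistent with the paper's (the paper gets commutation for free from the explicit moduli description, while your comparison of the two composites on $a_1$ is a valid substitute).
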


\begin{proof}
By Remark~\ref{rmk:torsionorbits}, the points of ${\cal Y}$ are
locally described as abelian schemes $A$ equipped
with an equivalence class $[p]$ of $xy$-torsion point under the
action of a subgroup of $H < (\Lambda/(xy))^\times$.  Conjugation by
$xy$ preserves this subgroup, and so we can lift the involution
$w_{15}$ to an involution $(A,[p]) \mapsto (w_{15} A, [p])$ of
${\cal Y}$. This is an automorphism over ${\cal O}_K$ and commutes
with the $\mb Z/2$-action, which sends $[p]$ to $[kp]$ for some $k \in
\Lambda$.

This lift of $w_{15}$ fixes $b_1^2 = x a_1^2$, and so must either fix
$a_1$ or negate it; however, by possibly composing with the $\mb
Z/2$-action on ${\cal Y}$ we can replace it with a lift fixing $a_1$.
It then fixes $a_1$ and $a_1 a_3$, and since $a_1$ is not a zero
divisor it fixes $a_3$.
\end{proof}

\section{Spectra}

As in \cite[2.6]{shimc}, the $2$-divisible group on the
Deligne-Mumford stack ${\cal X}^D/w_{15}$ gives rise to a sheaf ${\cal
O}^{der}$ of spectra on the \'etale site of $\comp{({\cal
X}^D/w_{15})}_2$.  In particular, we can define global section objects:
\begin{align*}
E &= \Gamma(\comp{{\cal Y}}_2, {\cal O}^{der})\\
\TAF^D &= \Gamma(\comp{({\cal X}^D)}_2, {\cal O}^{der})\\
E^{w_{15}} &= \Gamma(\comp{({{\cal Y}/w_{15}})}_2, {\cal O}^{der})\\
(\TAF^D)^{w_{15}} &= \Gamma(\comp{({\cal X}^D/w_{15})}_2, {\cal O}^{der})
\end{align*}
The expressions of ${\cal X}^D$, ${\cal Y}/w_{15}$, and ${\cal
  X}^D/w_{15}$ as quotients of ${\cal Y}$ by group actions give us
fixed-point expressions:
\begin{align*}
\TAF^D &\simeq E^{h(\mb Z/8 \times 1)}\\
E^{w_{15}} &\simeq E^{h(1 \times \mb Z/2)}\\
(\TAF^D)^{w_{15}} &\simeq E^{h(\mb Z/8 \times \mb Z/2)}
\end{align*}
Our goal in this section is to analyze the resulting homotopy
fixed-point spectral sequences.  We note that the $\mb Z/8
\times \mb Z/2$-invariant cover of ${\cal Y}$ by $a_1^{-1} {\cal Y}$
and $a_3^{-1} {\cal Y}$ gives rise to an equivariant homotopy pullback
diagram:
\begin{equation}
  \label{eq:mayervietoris}
\xymatrix{
E \ar[r] \ar[d] & a_1^{-1} E \ar[d] \\
a_3^{-1} E \ar[r] & (a_1 a_3)^{-1} E
}
\end{equation}
On homotopy groups, this determines a square which
is part of a Mayer-Vietoris sequence:
\[
\xymatrix{
\pi_* E \ar[r] \ar[d] & {\cal O}_K[a_1^{\pm 1},a_3,a_6]/f(a_i) \ar[d] \\
{\cal O}_K[a_1,a_3^{\pm 1},a_6]/f(a_i) \ar[r] & 
{\cal O}_K[a_1^{\pm 1},a_3^{\pm 1},a_6]/f(a_i)
}
\]

\begin{prop}
\label{prop:conncover}
There is an $E_\infty$ ring spectrum $e$ with a $\mb Z/8 \times \mb
Z/2$-equivariant map $e \to E$ which, on homotopy groups, is the inclusion
\[
\mb {\cal O}_K[a_1, a_3, a_6]/f(a_i) \to \pi_* E
\]
of terms in even total degree which were contributed by $H^0({\cal Y})$.
\end{prop}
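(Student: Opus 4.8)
The plan is to build the connective cover $e$ explicitly using the Mayer–Vietoris presentation of $E$ in~(\ref{eq:mayervietoris}), by constructing connective $E_\infty$-models for each of the three corners and gluing. Since $a_1^{-1}{\cal Y}$ and $a_3^{-1}{\cal Y}$ have affine coarse moduli, the spectra $a_1^{-1}E$, $a_3^{-1}E$, and $(a_1a_3)^{-1}E$ are even-periodic (or at least have homotopy concentrated in even degrees, with no higher cohomology), so each is already equivalent to its own connective cover in the relevant sense. The idea is: the subring $\mb Z_2[a_1,a_3,a_6]/f(a_6)\subset \pi_*E$ of even-total-degree elements sits inside each localized ring, and I want to realize the inclusion by a map of $E_\infty$-rings.

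First I would construct $e$ as a homotopy pullback of $E_\infty$-rings
\[
e = a_3^{-1}e \times_{(a_1a_3)^{-1}e} a_1^{-1}e,
\]
where $a_i^{-1}e$ denotes the connective $E_\infty$-ring whose homotopy is the subring of even-total-degree elements of $\pi_*(a_i^{-1}E)$. To get these, note $a_1^{-1}E$ is an $E_\infty$-ring with $\pi_* = \mb Z_2[a_1^{\pm1},a_3,a_6]/f(a_6)$, and the even-total-degree subring is $\mb Z_2[a_1^{\pm2}, a_1 a_3, a_6]/(\cdots)$; this is a finitely generated even-periodic-over-a-connective situation, and one realizes it either by a Postnikov/obstruction-theoretic argument or, more cleanly, by observing it is the homotopy fixed points of the $\mb Z/2$-action (the factor acting by $a_1\mapsto -a_1$) which already exists on $a_1^{-1}E$ — so $a_1^{-1}e := (a_1^{-1}E)^{h\mb Z/2}$ for that $\mb Z/2$, and similarly for the other two corners, compatibly. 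Then the maps in the Mayer–Vietoris square are $\mb Z/2$-equivariant for this action, so taking homotopy fixed points of that single $\mb Z/2$ throughout the square~(\ref{eq:mayervietoris}) and setting $e := E^{h\mb Z/2}$ gives the desired $E_\infty$-ring, with a residual $\mb Z/2\times\mb Z/2$-action (one factor being the one we took fixed points of — which now acts trivially — and needs to be handled carefully, so more precisely the relevant group is the one generated by the $w_{15}$-lift and one other involution).

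Next I would verify the computation of homotopy groups. The homotopy fixed point spectral sequence for this $\mb Z/2$ acting on $E$ has $E_2 = H^*(\mb Z/2; \pi_*E)$. Since $\pi_*E = R_*$ is a free $\mb Z_2$-module with the $\mb Z/2$ acting by negating the odd-total-degree part and fixing the even part, the group cohomology is concentrated: $H^0$ is the even subring $\mb Z_2[a_1,a_3,a_6]/f(a_6)$, and $H^{>0}$ consists of $2$-torsion classes supported on the odd part. Here I must check the spectral sequence collapses onto $H^0$ so that $\pi_* e$ is exactly the even subring — equivalently, that the higher group cohomology classes do not survive, or more accurately, I should arrange things so that $e$ is the \emph{connective cover} in the sense of killing the negative homotopy coming from the localized corners rather than literally $E^{h\mb Z/2}$. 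The cleanest route: observe that the even subring $\mb Z_2[a_1,a_3,a_6]/f(a_6)$ maps isomorphically onto $H^0$ of each corner's fixed points, and the Mayer–Vietoris/pullback of these $H^0$'s recovers exactly this subring (the relevant $H^1$ contributions vanish because $a_6/(a_1a_3)$ — the class $D$ — is odd, hence not in the even subring, so the connecting map in Mayer–Vietoris for the even parts is zero).

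**The main obstacle** will be the $E_\infty$-structure and the equivariance bookkeeping: I need $e\to E$ to be a map of $E_\infty$-rings that is moreover $\mb Z/2\times\mb Z/2$-equivariant, where the two $\mb Z/2$'s are the one from the double cover ${\cal Y}\to {\cal X}^D$ and the one from $w_{15}$. Constructing $e$ as a homotopy limit of $E_\infty$-rings over the Mayer–Vietoris diagram handles the multiplicative structure functorially, and the group actions on~(\ref{eq:mayervietoris}) are compatible with that diagram (the cover by $a_1^{-1}{\cal Y}$ and $a_3^{-1}{\cal Y}$ is $\mb Z/2\times\mb Z/2$-invariant by hypothesis, and $w_{15}$ fixes $a_1,a_3$), so the action passes to the limit. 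The subtle point is checking that this homotopy-limit construction genuinely produces a \emph{connective} spectrum with the claimed homotopy — i.e. that no negative-degree or unwanted positive-degree classes are introduced by the gluing — which reduces to the homological fact, already visible in the proof of the Serre duality proposition, that $H^1({\cal Y};\omega^{\otimes *})$ is generated over $R_*$ by $D=(a_1a_3)^{-1}a_6$ in odd total degree, so it contributes nothing to the even-degree pullback. I would record this as the key lemma and then the rest is formal.
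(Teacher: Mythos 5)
There is a genuine gap, and it comes in two parts.

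First, you conflate ``even total degree in $\pi_* E$'' with ``invariant under the deck transformation of ${\cal Y} \to {\cal X}^D$.'' The target ring $\mb Z_2[a_1,a_3,a_6]/f(a_6)$ contains $a_1 \in \pi_2 E$ and $a_3 \in \pi_6 E$, both of which are \emph{negated} by that $\mb Z/2$-action (Corollary~\ref{cor:coefficientring}); the even-degree subring is all of $R_* = H^0({\cal Y};\omega^{\otimes *})$, not the invariants $\mb Z_2[a_1^2, a_1a_3, a_3^2, a_6]$. Consequently your proposed identification $e := E^{h\mb Z/2}$ produces $\TAF^D$ itself, whose homotopy (computed later in the paper) is the invariant subring together with a large amount of $\zeta$-torsion and negative-degree classes --- not the ring in the statement. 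The same objection applies corner by corner: $(a_1^{-1}E)^{h\mb Z/2}$ is $a_1^{-1}\TAF^D$, not an $E_\infty$-model for the even part of $\pi_*(a_1^{-1}E)$.

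Second, even setting the fixed-point misidentification aside, gluing the three localized corners over the Mayer--Vietoris square cannot produce a spectrum with homotopy concentrated in even degrees, because that homotopy pullback \emph{is} $E$ (this is exactly diagram~(\ref{eq:mayervietoris})). The odd homotopy of $E$ arises precisely as the cokernel of $\pi_{2t}(a_1^{-1}E)\oplus\pi_{2t}(a_3^{-1}E)\to\pi_{2t}((a_1a_3)^{-1}E)$ landing in $\pi_{2t-1}$ of the pullback via the connecting homomorphism; this is the $H^1({\cal Y};\omega^{\otimes t})$ generated over $R_*$ by $D=(a_1a_3)^{-1}a_6$. Your ``key lemma'' --- that $D$ is odd and therefore contributes nothing to the even-degree pullback --- misreads the long exact sequence: $D$ and its relatives are exactly the odd-degree classes that the construction of $e$ must excise, and the gluing does not excise them. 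The paper's actual route is different and much more economical: take the connective cover $\tilde e$ of $E$ (which is functorially $\mb Z/2\times\mb Z/2$-equivariant and already kills the negative-degree part of $H^1$), observe that only low-dimensional odd homotopy survives, and remove it by forming the homotopy pullback of $\tilde e \to P_1\tilde e \leftarrow H\mb Z_2$, using that $\eta$ maps to zero in $\pi_1\tilde e$ so that the unit factors through the cone on $\eta$. Some device of this surgical kind is unavoidable, and your proposal does not supply one.
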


\begin{proof}
Let $\tilde e$ be the connective cover of $E$; the map $\tilde e \to
E$ is a $\mb Z/8 \times \mb Z/2$-equivariant map, and has the desired
behavior on homotopy groups except that $\pi_1 \tilde e$ is ${\cal O}_K
\neq 0$.  Let $P_1 \tilde e = \tilde e[0,1]$ be the Postnikov stage of
$\tilde e$.  As the element $\eta \in \pi_1(\comp{\mb S}_2)$ maps to
zero in $\pi_1(\tilde e)$, the map $\comp{\mb S}_2 \to P_1 \tilde e$
factors through the Postnikov stage of the equivariant $E_\infty$ cone
on $\eta$, which is $H\mb Z_2$. Similarly, attaching cells to $H\mb
Z_2$ allows us to extend to a map of $E_\infty$ rings $H{\cal O}_K \to
P_1 e$. We can therefore form the homotopy pullback
\[
\xymatrix{
e \ar[r] \ar[d] & \tilde e \ar[d] \\
H{\cal O}_K \ar[r] & P_1 \tilde e,
}
\]
which has the desired properties.
\end{proof}

A direct consideration of homotopy groups gives the following.
\begin{cor}
The maps $a_1^{-1} e \to a_1^{-1} E$ and $a_3^{-1} e \to a_3^{-1} E$
are equivalences.
\end{cor}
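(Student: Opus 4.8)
The plan is to check each map on homotopy groups; both equivalences then follow at once from computations already recorded above. By Proposition~\ref{prop:conncover}, $e \to E$ is a map of ring spectra inducing the inclusion of $\pi_* e = R_* = \mb Z_2[a_1,a_3,a_6]/f(a_6)$ into $\pi_* E$ as the subring of classes of even total degree. Since $a_1 \in \pi_2 e$ maps to $a_1 \in \pi_2 E$, multiplication by $a_1$ is an $e$-module self-map of $e$ and of $E$, and $a_1^{-1}e$, $a_1^{-1}E$ are the mapping telescopes $\hocolim(e \xrightarrow{a_1}\Sigma^{-2}e \to \cdots)$ and $\hocolim(E \xrightarrow{a_1}\Sigma^{-2}E \to \cdots)$. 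Because homotopy groups commute with filtered homotopy colimits, $\pi_*(a_1^{-1}e) = a_1^{-1}R_*$ and $\pi_*(a_1^{-1}E) = a_1^{-1}\pi_* E$, with the induced map being $a_1^{-1}$ applied to the inclusion $R_*\hookrightarrow \pi_* E$.

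So it suffices to see that inverting $a_1$ annihilates the odd-degree part of $\pi_* E$. That part is a degree shift of $H^1({\cal Y};\omega^{\otimes *})$, and if we localize the four-term exact sequence
\[
0 \to R_* \to a_1^{-1}R_*\oplus a_3^{-1}R_* \to (a_1a_3)^{-1}R_* \to H^1({\cal Y};\omega^{\otimes *}) \to 0
\]
from the proof of the Serre-duality proposition at $a_1$, the summand $a_3^{-1}R_*$ becomes $(a_1a_3)^{-1}R_*$ and maps isomorphically onto the target, so the cokernel dies, i.e.\ $a_1^{-1}H^1({\cal Y};\omega^{\otimes *}) = 0$. (Geometrically this is the statement that $a_1^{-1}{\cal Y}$ has affine coarse moduli and points of odd automorphism order, hence no higher cohomology, which is also why the Mayer--Vietoris square after \eqref{eq:mayervietoris} records $\pi_*(a_1^{-1}E) = \mb Z_2[a_1^{\pm 1},a_3,a_6]/f(a_6)$.) Therefore $a_1^{-1}\pi_* E = a_1^{-1}R_*$, concentrated in even degrees, the map $\pi_*(a_1^{-1}e)\to\pi_*(a_1^{-1}E)$ is the identity of $\mb Z_2[a_1^{\pm 1},a_3,a_6]/f(a_6)$, and hence $a_1^{-1}e\to a_1^{-1}E$ is an equivalence. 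Interchanging the roles of $a_1$ and $a_3$ throughout gives the statement for $a_3^{-1}e \to a_3^{-1}E$.

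I do not anticipate a genuine obstacle here — the surrounding text already flags this as a direct consideration of homotopy groups. The only point deserving (routine) care is the commutation of $a_1$-localization with passage from the sheaf ${\cal O}^{der}$ to its global sections, i.e.\ the identification of $a_1^{-1}E$ both with the telescope above and with $\Gamma$ of ${\cal O}^{der}$ over the open substack $a_1^{-1}{\cal Y}$; modeling $a_1^{-1}e$ and $a_1^{-1}E$ explicitly as telescopes settles this, after which everything reduces to the displayed exact sequence and to the already-computed value of $\pi_*(a_1^{-1}E)$.
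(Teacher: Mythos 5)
Your argument is correct and is exactly the ``direct consideration of homotopy groups'' the paper alludes to (it gives no further details): the even parts of $\pi_*e$ and $\pi_*E$ agree by Proposition~\ref{prop:conncover}, and localizing the four-term Mayer--Vietoris sequence at $a_1$ (or $a_3$) kills the $H^1$-contribution, which is all of the odd-degree part of $\pi_*E$. The telescope identification you flag is the standard one, so nothing further is needed.
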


\section{Homotopy groups of $\TAF^D$}

\subsection{Connective fixed points}
We begin to compute the homotopy groups of $\TAF^D$ by analyzing the
homotopy fixed point spectral sequence for the spectrum $e$ in
Proposition~\ref{prop:conncover}.  These carry a very strong
resemblance to the calculations of \cite{mahowald-rezk-level3}, with
the addition of the generator $b_6$ which operates largely
independently.

We will write $\zeta$ for the generating element of $H^1(\mb Z/2; \mb
Z_2^{sgn})$.  This allows us to give the following short expression of
the group cohomology.

\begin{prop}
  The $E_2$-term of the homotopy fixed point spectral sequence for
  $\mb Z/8$ acting on $e$ takes the following form:
\[
    E_2^{s,t} = \mb Z_2[b_1^2, b_1 b_3, b_3^2, b_6, \zeta^2, \zeta a_1,
    \zeta a_3] / (2\zeta, f(b_i))
\]
\end{prop}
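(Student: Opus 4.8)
The plan is to compute the group cohomology $H^*(\mb Z/2; R_*)$ where $R_* = \mb Z_2[a_1,a_3,a_6]/f(a_6) = \pi_* E$ carries the $\mb Z/2$-action from Corollary~\ref{cor:coefficientring} (negating $a_1$ and $a_3$, fixing $a_6$), and to identify it with the displayed ring. First I would observe that the $\mb Z/2$-action on $R_*$ is \emph{semifree} in the algebraic sense: writing $g$ for the generator, $R_*$ decomposes as a sum of the trivial module on monomials $a_1^i a_3^j a_6^\epsilon$ with $i+j$ even and the sign module on those with $i+j$ odd. Since $f(a_6)$ is fixed by the action (its coefficients are the fixed elements $a_1^6+a_1^3a_3+a_3^2$ and $a_1^6$ discussed in the proof of Proposition~\ref{prop:w15-action}), the relation is $\mb Z/2$-equivariant and this decomposition descends to the quotient. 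So $R_*$ is a free module over $R_*^{\mb Z/2} = \mb Z_2[a_1^2, a_1a_3, a_3^2, a_6]/(f(a_6))$ on the two classes $1$ and $a_1$ (equivalently $1$ and $a_3$), one in each isotypic piece.

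Next I would reduce to the standard computation of $H^*(\mb Z/2; M)$ for $M$ the trivial or sign $\mb Z_2$-module: $H^*(\mb Z/2;\mb Z_2) = \mb Z_2[\zeta^2]$ with $\zeta^2$ in cohomological degree $2$ and $2\zeta^2 = 0$ (here I am using $\zeta$ only as a bookkeeping name — the actual polynomial generator is $\zeta^2$), while $H^*(\mb Z/2;\mb Z_2^{sgn})$ is the free $\mb Z_2[\zeta^2]$-module on a degree-one class which I will call $\zeta$, again all $2$-torsion above degree $0$, and with $H^0(\mb Z/2;\mb Z_2^{sgn}) = 0$. Multiplicativity of the cup product then assembles these: over the fixed ring $R_*^{\mb Z/2}$, the cohomology $H^*(\mb Z/2;R_*)$ is generated by $\zeta^2$ together with the classes $\zeta a_1$ and $\zeta a_3$ coming from the two sign-module generators of $R_*$ over $R_*^{\mb Z/2}$ — these are exactly the products of the sign generators $a_1, a_3 \in R_*$ with the cohomology class $\zeta$. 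The relations $2\zeta = 0$ (hence $2\zeta a_1 = 2\zeta a_3 = 0$, and $2\zeta^2 = 0$ is a consequence) and $f(a_6) = 0$ are the only relations, because $R_*$ is free over $R_*^{\mb Z/2}$ and the group cohomology of each free summand is cut out precisely by these.

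Finally I would rewrite the answer in the stated form. The subring of total even degree is the image of $R_*[\zeta^2]$, and the odd-degree part is spanned over that by $a_1, a_3, \zeta, \zeta^2 a_1, \ldots$; collecting generators, the whole bigraded ring is $\mb Z_2[a_1^2, a_1a_3, a_3^2, a_6, \zeta^2, \zeta a_1, \zeta a_3]/(2\zeta, f(a_6))$, matching the claim once one notes $(\zeta a_1)(\zeta a_3) = \zeta^2 (a_1 a_3)$ and $(\zeta a_i)^2 = \zeta^2 a_i^2$ so that no further generators are needed. The one point deserving care — and the main obstacle — is checking that there are no hidden relations among these seven generators beyond $2\zeta$ and $f(a_6)$: concretely, that the subring $\mb Z_2[a_1^2,a_1a_3,a_3^2]$ of $\mb Z_2[a_1,a_3]$ (the ``even'' part) has exactly the expected presentation with relation $(a_1^2)(a_3^2) = (a_1a_3)^2$, and that adjoining $a_6$ modulo $f(a_6)$ introduces nothing new. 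This is a routine but essential Rees-algebra / invariant-theory verification, handled most cleanly by exhibiting the explicit $\mb Z_2[\zeta^2]$-module basis of $H^*(\mb Z/2;R_*)$ given by monomials $a_1^k a_3^l a_6^\epsilon (\zeta a_1)^{\delta}$ with $\epsilon,\delta \in \{0,1\}$ and comparing term by term with the claimed ring.
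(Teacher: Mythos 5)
The paper states this proposition without proof, treating it as a standard group-cohomology computation, so your proposal is supplying the argument the paper leaves implicit; your overall strategy (decompose $\pi_* e$ into trivial and sign isotypic pieces, feed in $H^*(\mb Z/2;\mb Z_2)$ and $H^*(\mb Z/2;\mb Z_2^{sgn})$, reassemble multiplicatively) is the intended one and leads to the right answer. However, there is a concrete error in the middle of your argument: the claim that $R_*$ is a \emph{free} module over $R_*^{\mb Z/2} = \mb Z_2[a_1^2,a_1a_3,a_3^2,a_6]/(f(a_6))$ on the two classes $1$ and $a_1$ is false. The sign-isotypic part $R_*^-$ (spanned by monomials $a_1^ia_3^ja_6^\epsilon$ with $i+j$ odd) is not generated by $a_1$ alone over the invariants --- $a_3$ is not of the form $a_1\cdot r$ with $r$ invariant --- and with both generators $a_1$ and $a_3$ it is not free, since $(a_1a_3)\cdot a_1 = (a_1^2)\cdot a_3$ and $(a_3^2)\cdot a_1 = (a_1a_3)\cdot a_3$. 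This is precisely why the stated presentation needs \emph{both} $\zeta a_1$ and $\zeta a_3$ as generators, and why relations such as $(\zeta a_1)(a_1a_3) = (\zeta a_3)(a_1^2)$ are implicitly present (encoded by reading the generator names inside $R_*[\zeta]/(2\zeta)$). Your justification that ``$2\zeta=0$ and $f(a_6)=0$ are the only relations because $R_*$ is free over $R_*^{\mb Z/2}$'' therefore does not stand, and the explicit basis you propose at the end, monomials $a_1^ka_3^la_6^\epsilon(\zeta a_1)^\delta$, inherits the same defect: it omits all classes $\zeta^{2j+1}a_3^{2m+1}a_6^\epsilon$, e.g.\ $\zeta a_3$ and $\zeta a_3^3$, which do appear in the $E_2$-term (and indeed in the list (\ref{eq:spurious-classes})).

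The repair is to drop the freeness claim entirely and work directly with the monomial decomposition, which is the part of your argument that is correct: since $f(a_6)$ is monic in $a_6$ and invariant, $R_*$ is the direct sum over monomials $a_1^ia_3^ja_6^\epsilon$ of copies of the trivial module ($i+j$ even) and the sign module ($i+j$ odd), so $H^0 = R_*^+$, $H^{2k} = R_*^+/2$ for $k>0$, and $H^{2k+1} = R_*^-/2$. The identification with the displayed ring then comes from $R_*^- = a_1R_*^+ + a_3R_*^+$ together with the multiplicative structure, where the presentation must be read as the subring of $R_*[\zeta]/(2\zeta,\,f(a_6))$ generated by the seven listed elements, with all the attendant relations among them.
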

In particular, these terms are concentrated in degrees with $(t-s)
\equiv s$ mod $4$, and the only possible differentials are $d_{4k+3}$.

\begin{rmk}
As $x$ reduces to $1$ mod $2$, $\zeta x = 0$, and hence for any
polynomial $g(a_1,a_3, a_6)$ which is homogeneous of even degree we
have $\zeta g(a_1, a_3, a_6) = \zeta g(b_1, b_3, b_6)$. This allows us
to abuse notation by writing $\zeta b_i = \zeta a_i$.
\end{rmk}

\begin{prop}
\label{prop:detection}
Under the map $\pi_* \mb S \to e^{h\mb Z/2}$, the images of $\eta \in
\pi_1 \mb S$ and $\nu \in \pi_3 \mb S$ are represented by $\zeta b_1$
and $\zeta^3 b_3$ in the homotopy fixed point spectral sequence.
\end{prop}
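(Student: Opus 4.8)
The plan is to identify the Hurewicz images of $\eta$ and $\nu$ by comparing the homotopy fixed point spectral sequence for $e$ with the well-understood one for $KO$ (or equivalently, the $C_2$-fixed points of a form of real $K$-theory), via a map of ring spectra. First I would recall that the $E_2$-term just described is, on the ``Mahowald--Rezk part'' (the subring generated by $a_1, a_3, \zeta$), abstractly isomorphic to the $E_2$-term for the $KO$-based descent problem studied in \cite{mahowald-rezk-level3}, with $a_1$ in the role of the Bott-type class $v_1$-precursor and $\zeta$ the usual polynomial generator of $H^*(C_2;\mathbb Z_2^{sgn})$. The generator $a_6$ sits in even total degree and is a permanent cycle for degree reasons, so it does not interfere with the location of the images of $\eta$ and $\nu$.

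The key step is to exhibit a unit map $\mathbb S \to e$ of $E_\infty$ ring spectra (which exists since $e$ is an $E_\infty$ ring spectrum by Proposition~\ref{prop:conncover}) and to track $\eta$ and $\nu$ through the induced map of homotopy fixed point spectral sequences $E_r(\mathbb S^{hC_2}) \to E_r(e^{hC_2})$; here the source can be replaced by the descent spectral sequence for a chosen $C_2$-equivariant orientation landing in a real-oriented theory, so that the classical computation applies. On the $E_2$-page, $\eta$ must map to a class in filtration $s=1$ and total degree $t-s = 1$, and by the structure of $E_2$ the only candidate is $\zeta a_1$ (the class $\zeta^2$ lies in degree $(2,0)$, $a_1^2$ in $(0,2)$, and no other monomial has bidegree $(1,1)$). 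Likewise $\nu$ lands in filtration $s=3$, total degree $3$, and $\zeta^3 a_3$ is the unique monomial of bidegree $(3,3)$ modulo the relation $2\zeta=0$. One then checks that these classes are nonzero permanent cycles: $\eta^3 = 12\nu = 4\nu \neq 0$ in $\pi_3\mathbb S$ forces $\zeta a_1$ to survive, and the relation $\nu \cdot \eta = 0$ together with the multiplicative structure pins down $\zeta^3 a_3$ rather than, say, $\zeta a_1^3/a_1^2$-type expressions. The cleanest argument is simply that $\eta$ is detected in filtration $1$ by the unit map to $H\mathbb Z_2 \to P_1\tilde e$ data used in the construction of $e$, combined with the fact that $\zeta a_1$ is the $\eta$-image in the $KO$-subquotient, and then $\nu = $ (the image of) a class whose cube-root-of-$\eta$-Massey-product or direct Toda-bracket expression forces it to be $\zeta^3 a_3$.

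The main obstacle I expect is making precise the comparison with the $KO$ homotopy fixed point spectral sequence: the spectrum $E$ is built from a two-dimensional $2$-divisible group that splits into $1$-dimensional summands, and while Lurie's theorem and the splitting give a map relating $E$ to a Lubin--Tate-type spectrum with $C_2$-action whose fixed points are (a wedge summand involving) $KO$-local data, one has to verify that this map is $C_2$-equivariant and identifies $a_1$ with the relevant Bott-periodicity-adjacent class and $\zeta$ with the group-cohomology generator in the expected bidegrees. Once that dictionary is in place, the identification of $\eta \mapsto \zeta a_1$ and $\nu \mapsto \zeta^3 a_3$ is immediate from Mahowald--Rezk. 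An alternative that sidesteps the heaviest comparison: argue purely internally that $\eta \in \pi_1 e^{hC_2}$ must be nonzero (since the unit $\mathbb S \to e$ is a rational equivalence in low degrees and $e$ is connective with $\pi_0 = \mathbb Z_2$, $\pi_1 = 0$, so $\eta$ is detected in positive filtration in the HFPSS), hence lies in the unique bidegree $(1,1)$ class $\zeta a_1$; then $\nu$, being $\langle \eta,\eta,\eta\rangle$-related and satisfying $\eta\nu = 0$, must be represented by $\zeta^3 a_3$, the only bidegree $(3,3)$ monomial, using that $\zeta^3 a_1^3$ would be $\eta^3 \cdot(\text{unit})$ and $\eta^3 \neq \nu$ only up to the relation $\eta^3 = 4\nu$, which still forces nontriviality of $\zeta^3 a_3$ by multiplicativity. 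Either route reduces the statement to the bidegree bookkeeping already supplied by the preceding proposition.
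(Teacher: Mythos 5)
There is a genuine gap, and it sits exactly where the proposition's real content lies. Your argument ultimately rests on the claims that ``$\nu$ lands in filtration $s=3$'' and that ``$\zeta^3 a_3$ is the unique monomial of bidegree $(3,3)$,'' and neither is established. A priori the image of $\nu$ in $\pi_3 e^{h\mb Z/2}$ could vanish or be detected in higher filtration: since the $E_2$-term is concentrated where the total degree is congruent to $s$ mod $4$, total degree $3$ receives contributions from $s = 3, 7, 11, \dots$ (e.g.\ $\zeta^7 a_1^5$ and $\zeta^7 a_1^2 a_3$), and even within $s=3$ the monomial $\zeta^3 a_1^3 = (\zeta a_1)^3$ sits alongside $\zeta^3 a_3$. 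Pinning down the filtration is precisely what needs proof, and the paper does it by specializing to the supersingular locus: there is a map $e \to E_2$ to a height-$2$ Lubin--Tate spectrum over $\mb F_4$ sending $a_1 \mapsto u_1 u$ and $a_3 \mapsto u^3$, under which the $\mb Z/2$-action on $e$ realizes the action of $[-1]$ in the Morava stabilizer group; the transfer argument of Mahowald--Rezk then shows $\nu$ is detected on the $3$-line of the homotopy fixed point spectral sequence for $E_2$, hence also for $e$, and the explicit images of $a_1$ and $a_3$ (the latter a unit, the former not) identify the detecting class as $\zeta^3 a_3$. Your proposed comparison is instead with $KO$; no ring map from $e$ to a form of real $K$-theory compatible with the $\mb Z/2$-action is constructed in the paper or obviously available, and an abstract isomorphism of $E_2$-terms with a ``$KO$-based descent problem'' does not transport detection of elements of $\pi_* \mb S$.

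The fallback arguments have the same defect. For $\eta$: the unit map being a rational equivalence in low degrees says nothing about the torsion class $\eta$, so nonvanishing of its image is not established, and even granting nonvanishing in positive filtration, degree bookkeeping alone permits filtration $5, 9, \dots$ (e.g.\ $\zeta^5 a_3$, $\zeta^5 a_1^3$), not only filtration $1$. The Toda-bracket remarks for $\nu$ are not carried out and would in any case require the same external input---a computation in a target theory where the answer is already known---which is exactly the supersingular specialization you would need to supply.
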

\begin{proof}
Completion at the supersingular locus gives a map $e \to E_2$ to a
Lubin-Tate spectrum over $\mb F_{16}$, which is expressed on homotopy
groups
\[
\mb Z_2[b_1, b_3, b_6] / f(b_i) \to {\cal O}_K\pow{u_1} [u^{\pm 1}]
\]
by sending $b_1$ to $u_1 u$, $b_3$ to $u^3$, and $b_6$ to the unique
root of $f(b_i)$ which reduces, mod $(2,u_1)$, to $\omega u^6$, where
$\omega$ is a chosen primitive third root of unity.  This expresses
$L_{K(2)} e$ as a homotopy fixed point spectrum of $E_2$ by an action of
$\mb Z/3$, and the $\mb Z/2$-action on $e$ realizes to the action of
$[-1]$ as a subgroup of the Morava stabilizer.  The elements $\eta$
and $\nu$ are detected on the $1$-line and $3$-line respectively of
the associated homotopy fixed point spectral sequence for $E_2$, by a
transfer argument in the latter case\cite{mahowald-rezk-level3}. They
are hence also detected in the homotopy fixed point spectral sequence
for $e$.
\end{proof}

\begin{prop}
In the homotopy fixed point spectral sequence, we have differentials
\begin{align*}
d_3(\zeta^2) &= \zeta^5 b_1\text{ and}\\
d_7(\zeta^4) &= \zeta^{11} b_3.
\end{align*}
\end{prop}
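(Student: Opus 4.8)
The plan is to reduce both differentials to nonvanishing statements that can be read off from the homotopy fixed point spectral sequence for the Lubin--Tate spectrum $E_2^{hC_6}$---equivalently, from the Mahowald--Rezk computation---by naturality along the supersingular specialization of the previous proposition.

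First I would pin down the possible targets. Since the only possible differentials are $d_{4k+3}$, the class $\zeta^2$ survives to $E_3$ and $d_3(\zeta^2)$ lies in $H^5(\mb Z/2;\pi_2 e)$; but $\pi_2 e=\mb Z_2\langle a_1\rangle$ with the sign action, so this group is $\mb F_2\langle \zeta^5 a_1\rangle$ and $d_3(\zeta^2)\in\{0,\zeta^5 a_1\}$. Next, $\zeta^4=(\zeta^2)^2$ is a $d_3$-cycle, since $d_3(\zeta^4)=2\zeta^2 d_3(\zeta^2)=0$ by $2\zeta=0$, and a quick bidegree check shows it is not a $d_3$-boundary; hence it survives to $E_7$ and $d_7(\zeta^4)$ lies in $H^{11}(\mb Z/2;\pi_6 e)$. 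That group has rank two, spanned by $\zeta^{11}a_1^3$ and $\zeta^{11}a_3$; however $\zeta^{11}a_1^3$ has already been killed by the time we reach $E_7$, being hit via $d_3(\zeta^8 a_1^2)=(\zeta^2)^4 d_3(a_1^2)=\zeta^{11}a_1^3$, which in turn uses the preliminary differential $d_3(a_1^2)=\zeta^3 a_1^3$ obtained below. So $d_7(\zeta^4)\in\{0,\zeta^{11}a_3\}$, and in both cases only the nonvanishing needs proof.

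Now I would invoke the map $e\to E_2$ of the previous proposition, which identifies $L_{K(2)}e$ with $E_2^{hC_3}$ together with a $\mb Z/2$-action realizing the central $\mb Z/2$ of the Morava stabilizer group, so that $L_{K(2)}(e^{h\mb Z/2})\simeq E_2^{hC_6}$. Because $\pi_* e=\mb Z_2[a_1,a_3,a_6]/f(a_6)$ injects into $\pi_* L_{K(2)}e=(E_2)_*^{C_3}$---up to $2$-completion this is the localization inverting the non-zero-divisor $a_3$---the classes $\zeta^2$, $a_1^2$, $\zeta^3 a_1^3$, $\zeta^4$, $\zeta^{11}a_1^3$, $\zeta^{11}a_3$ all have nonzero image in the $E_2$-term of the $E_2^{hC_6}$ spectral sequence. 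It therefore suffices to establish that there $d_3(\zeta^2)\neq 0$, $d_3(a_1^2)\neq 0$, and $d_7(\zeta^4)\neq 0$; naturality of $d_3$ and $d_7$ then forces $d_3(\zeta^2)=\zeta^5 a_1$, $d_3(a_1^2)=\zeta^3 a_1^3$, and $d_7(\zeta^4)=\zeta^{11}a_3$ in the spectral sequence for $e$ (the second of these being exactly what the second paragraph needs, and the fact that $\zeta^{11}a_1^3$ is a $d_3$-boundary in $E_2^{hC_6}$ as well then pins down the $d_7$-target to $\zeta^{11}a_3$).

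The differentials in the $E_2^{hC_6}$ spectral sequence are those computed by Mahowald--Rezk: the $d_3$'s are the height-two incarnation of the classical $KO$-differential (so $d_3(a_1^2)$ is the class detecting $\eta^3$, with $\zeta a_1$ detecting $\eta$ as in the previous proposition), and the $d_7$ is the corresponding $\nu$-indexed differential, with $\zeta^3 a_3$ detecting $\nu$; each carries an extra $v_1=u_1$ factor reflecting the height-two behaviour, but this does not affect nonvanishing. The main obstacle is precisely this input: neither differential is formal, so the real work is (i) making the identification $L_{K(2)}e\simeq E_2^{hC_6}$ and its $\mb Z/2$-action explicit enough to quote the relevant differentials from Mahowald--Rezk, and (ii) keeping track of which classes in the (two-dimensional, in the $d_7$ case) target groups receive the differential---which is exactly what the injectivity of $\pi_* e\to\pi_* L_{K(2)}e$ and the preliminary differential $d_3(a_1^2)=\zeta^3 a_1^3$ handle. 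Everything else is the Leibniz rule and the relation $2\zeta=0$.
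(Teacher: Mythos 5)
Your argument is logically sound in outline, but it runs in the opposite direction from the paper's. The paper maps \emph{in} from the sphere: the homotopy fixed point spectral sequence for $\mb S$ with trivial $\mb Z/2$-action is the Atiyah--Hirzebruch spectral sequence for the stable cohomotopy of $\mb{RP}^\infty$, where the cell structure forces $d_2(\zeta^2) = \zeta^4\eta$ and $d_4(\zeta^4) = \zeta^8\nu$; comparing with skeletal truncations of $E\mb Z/2$ then forces the relations $\zeta^4\eta = 0$ and $\zeta^8\nu = 0$ for $e$, and since the previous proposition identifies $\eta$ and $\nu$ with $\zeta a_1$ and $\zeta^3 a_3$, the classes $\zeta^5 a_1$ and $\zeta^{11}a_3$ must be killed, with $d_3(\zeta^2)$ and $d_7(\zeta^4)$ the only options by sparsity. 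You instead map \emph{out} to the Lubin--Tate spectrum and pull the differentials back from $E_2^{hC_6}$. That buys you a uniform treatment (you get $d_3(a_1^2)\neq 0$ from the same source, which the paper defers to the next proposition via the cup-one formula), but it costs you a substantially heavier import: you must identify the $\mb Z/2$-homotopy fixed point spectral sequence of $L_{K(2)}e \simeq E_2^{hC_3}$ with (the $K(2)$-localization of) the Mahowald--Rezk spectral sequence for $\tmf_0(3)$ before you can quote their $d_3$ and $d_7$, and their own proofs of those differentials ultimately rest on the same comparison with the sphere that the paper uses directly. You acknowledge this in your items (i) and (ii), but those items \emph{are} the content of the proposition; the paper's route makes them unnecessary.

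Two smaller points to tighten. First, injectivity of $\pi_* e \to \pi_* L_{K(2)}e$ does not formally imply injectivity of $H^*(\mb Z/2; \pi_* e) \to H^*(\mb Z/2; \pi_* L_{K(2)}e)$, since group cohomology is not left exact; you need to check the specific bidegrees, which does work here because in positive filtration both sides are reductions of sign-isotypic pieces and the map remains injective mod $2$. Second, for the $d_7$ you should also verify that $\zeta^4$ survives to $E_7$ \emph{in the target} spectral sequence (it is a priori a candidate target for a $d_3$ from $H^1(\mb Z/2;\pi_{-2})$ there, which is nonzero after inverting $a_3$); this checks out because every $d_3$ in the localized spectral sequence lands in a multiple of $a_1$, but it is not a ``quick bidegree check'' in the target the way it is for connective $e$.
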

\begin{proof}
In the homotopy fixed point spectral sequence for the action of $\mb
Z/2$ on $\mb S$, which coincides with the Atiyah-Hirzebruch spectral
sequence for the stable cohomotopy of $\mb{RP}^\infty$, the cell
attachment structure of $\mb{RP}^\infty$ implies that we have
differentials
\begin{align*}
d_2(\zeta^2) &= \zeta^4 \eta\text{, and}\\
d_4(\zeta^4) &= \zeta^8 \nu.
\end{align*}
We truncate to a skeleton of $E\mb Z/2$ and compare this with the
homotopy fixed-point spectral sequence for $e$:
\[
F(\mb{RP}^k, \mb S) \to F_{\mb Z/2}((E\mb Z/2)^{(k)}, e).
\]
When $k=5$, the differentials we described show that $\zeta^4 \eta$ is
either trivial or detected by $\zeta^5 \eta^2$ in the spectral
sequence calculating $\pi_{-4} F(\mb{RP}^5, \mb S)$. Therefore,
$\zeta^5 b_1$ is equivalent to a class which could only be detected by
$0$ or the class $\zeta^7 b_1^2 = 0$
in the spectral sequence calculating $\pi_* F_{\mb Z/2}((E\mb
Z/2)^{(5)}, e)$, and thus must be the target of a differential; the
only possible source is $\zeta^2$.

Similarly, when $k=11$ the differentials show that $\zeta^8 \nu$ is
either trivial or detected by $\zeta^{11} \nu$; in the spectral
sequence calculating $\pi_* F_{\mb Z/2}((E\mb Z/2)^{(5)}, e)$ the
element $\zeta^{11} b_3$ is equivalent to a class that could only be
detected by zero or $\zeta^{14} b_3 = 0$. It then must be the target
of a differential; the only possible source after the $E_2$-page is
$\zeta^4$.
\end{proof}

\begin{prop}
The $d_3$-differentials in the homotopy fixed-point spectral sequence
are determined by the following differentials and the Leibniz rule:
\begin{align*}
  d_3(\zeta^2) &= \zeta^5 b_1 &
  d_3(b_1^2) &= \zeta^3 b_1^3 &
  d_3(b_3^2) &= \zeta^3 b_1 b_3^2 &
  d_3(b_1 b_3) &= 0 \\
  d_3(\zeta b_1) &= 0 &
  d_3(\zeta b_3) &= \zeta^4 b_1 b_3 &
  d_3(b_6) &= \zeta^3 b_1 b_6
\end{align*}
\end{prop}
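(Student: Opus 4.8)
The plan is to use that $d_3$ is a derivation for the ring structure on the $E_3 = E_2$-page, so that it is enough to compute $d_3$ on the seven algebra generators $\zeta^2$, $\zeta a_1$, $\zeta a_3$, $a_1^2$, $a_1 a_3$, $a_3^2$, $a_6$; the value $d_3(\zeta^2) = \zeta^5 a_1$ is the previous proposition. Two facts will be used throughout. First, every class of positive cohomological degree is killed by $2$ (the relation $2\zeta$), so signs are irrelevant on the target of a differential, and several products vanish either for this reason or by odd-stem parity (e.g. $d_3$ of the square of an odd-stem class is zero). Second, in each low internal degree that arises below, multiplication by $\zeta^2$ carries the monomial basis of $E_2^{s,t}$ bijectively onto that of $E_2^{s+2,t}$ — the only ring relation $f(a_6)$ lives in internal degree $24$ and so is invisible in these degrees — hence $\zeta^2$ acts injectively there.

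First I would feed in the two permanent cycles identified in the earlier proposition: $\zeta a_1$ detects $\eta$ and $\zeta^3 a_3$ detects $\nu$, so $d_3(\zeta a_1) = 0$ and $d_3(\zeta^3 a_3) = 0$. Expanding the latter as $d_3(\zeta^2 \cdot \zeta a_3)$ and substituting $d_3(\zeta^2) = \zeta^5 a_1$ gives $\zeta^2 d_3(\zeta a_3) = \zeta^6 a_1 a_3$, and injectivity of $\zeta^2$ in that bidegree forces $d_3(\zeta a_3) = \zeta^4 a_1 a_3$. Likewise, applying the Leibniz rule to $(\zeta a_1)^2 = \zeta^2 a_1^2$ (using $d_3(\zeta a_1) = 0$) gives $\zeta^2 d_3(a_1^2) = \zeta^5 a_1^3$, so $d_3(a_1^2) = \zeta^3 a_1^3$; evaluating $d_3$ on $(\zeta a_1)(\zeta a_3) = \zeta^2 a_1 a_3$ in two ways and comparing gives $\zeta^2 d_3(a_1 a_3) = 0$, so $d_3(a_1 a_3) = 0$; and applying Leibniz to $(\zeta a_3)^2 = \zeta^2 a_3^2$, whose left-hand side vanishes for parity, gives $\zeta^2 d_3(a_3^2) = \zeta^5 a_1 a_3^2$, so $d_3(a_3^2) = \zeta^3 a_1 a_3^2$.

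For the remaining generator $a_6$ I would differentiate the defining relation $f(a_6) = 0$. Writing $b = a_1^6 + a_1^3 a_3 + a_3^2$, the formulas just obtained give $d_3(b) = \zeta^3 a_1 b$ and, using $d_3(a_1^6) = \zeta^3 a_1^7$, a short computation shows $d_3\!\bigl(b^2 - \tfrac59 a_1^6 b\bigr) = 0$. Since $d_3(a_6^2) = 2 a_6 d_3(a_6) = 0$ (the target is $2$-torsion), applying $d_3$ to $f(a_6) = a_6^2 + a_6 b + (b^2 - \tfrac59 a_1^6 b)$ and simplifying yields
\[
b\,\bigl(d_3(a_6) + \zeta^3 a_1 a_6\bigr) = 0 .
\]

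The main obstacle is to deduce $d_3(a_6) = \zeta^3 a_1 a_6$ from this, i.e.\ to know that $b$ is a non-zero-divisor on the (2-torsion) target of $d_3(a_6)$; this reduces to checking that $b$ is a non-zero-divisor in the mod-$2$ reduction of the coefficient ring, for which it suffices that $\mb F_2[a_1,a_3,a_6]/f(a_6)$ is an integral domain, i.e.\ that $f(a_6)$ is irreducible over $\mb F_2[a_1,a_3]$. This is an Artin--Schreier computation: $b$ is irreducible in $\mb F_2[a_1,a_3]$ (as a quadratic $a_3^2 + a_1^3 a_3 + a_1^6$ in $a_3$, reducibility would require $1 \in \wp(\mb F_2(a_1))$), while the product of the two roots of $f$ reduces mod $2$ to $b(a_1^3 a_3 + a_3^2) = b\cdot a_3(a_1^3 + a_3)$ with the second factor prime to $b$; hence $(\text{product of roots})/b^2$ has a pole of odd order along $b$ and cannot lie in $\wp\bigl(\mb F_2(a_1,a_3)\bigr)$. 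Granting this, $b$ is a non-zero-divisor, so $d_3(a_6) = \zeta^3 a_1 a_6$; and since $d_3$ now annihilates both defining relations and is a derivation, its values on the seven generators determine it on all of $E_2$ by the Leibniz rule. (Alternatively, one could pin down $d_3(a_6)$ by specializing along the map $e \to E_2$ to the Lubin--Tate spectrum at the supersingular point, where the $\mb Z/2 = [-1]$ homotopy-fixed-point differentials are classical.)
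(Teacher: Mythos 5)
Your argument is correct and lands on the same differentials, but it gets its initial inputs by a genuinely different (and more purely algebraic) route than the paper. The paper seeds the computation with the Mahowald--Rezk cup-one identity $d_3(x^2) = \eta(\zeta x)^2$ for classes $x$ of even total degree negated by the involution, which gives $d_3(a_1^2)$ and $d_3(a_3^2)$ directly, and then propagates via the Leibniz rule using that $\zeta^2$, $a_1$, $a_3$ are non-zero-divisors mod $2$; its treatment of $a_6$ (differentiating $f(a_6)=0$) is identical to yours. You instead derive $d_3(a_1^2)$ and $d_3(a_3^2)$ formally from the ring relations $(\zeta a_1)^2 = \zeta^2 a_1^2$ and $(\zeta a_3)^2 = \zeta^2 a_3^2$ together with $d_3(\zeta^2) = \zeta^5 a_1$ and the injectivity of $\zeta^2$ in positive filtration (which is just the $2$-periodicity of $H^*(\mb Z/2;-)$ above degree zero, so your degree-$24$ caveat is not even needed), and you use the detection of $\nu$ by $\zeta^3 a_3$ to pin down $d_3(\zeta a_3)$. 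This trades a topological input (the cup-one/power-operation identity) for the previously established permanent cycles $\zeta a_1$ and $\zeta^3 a_3$; it is a pleasant consistency check that the formal argument reproduces the cup-one answer. Finally, your Artin--Schreier verification that $f$ is irreducible mod $2$, hence that $b = a_1^6 + a_1^3 a_3 + a_3^2$ acts injectively on the target group $R_7/2$ of $d_3(a_6)$, makes explicit a non-zero-divisor claim that the paper's phrase ``which forces the desired differential'' leaves implicit; that is a small but real gain in completeness.
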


\begin{proof}
The differential on $\zeta^2$ is determined by the previous
proposition.

For a class in even total degree which is negated by the $\mb
Z/2$-action, \cite{mahowald-rezk-level3} describes a cup-one identity
$d_3(x^2) = \eta (\zeta x)^2$ in the homotopy fixed-point spectral
sequence for the action of the subgroup $\mb Z/2$, which determines
a nontrivial differential
\[
d_3(b_1^2) = d_3(x a_1^2) = x \zeta^3 b_1^3 = \zeta^3 b_1^3.
\]
Naturality of the spectral sequence implies that there is a
corresponding differential in the homotopy fixed-point spectral
sequence for the action of $\mb Z/8$. The differential on $b_3^2$
follows similarly.

The elements $\zeta b_1$ and $\zeta^3 b_3$ support no differentials by
Proposition~\ref{prop:detection}.

The Leibniz rule shows the identity
\[
0 = d_3((\zeta b_1) (\zeta^3 b_3)) = \zeta^4 d_3(b_1 b_3).
\]
Similarly, we have
\[
0 = d_3(\zeta^3 b_3) = \zeta^6 b_1 b_3 + \zeta^2 d_3(\zeta b_3).
\]
Multiplication by $\zeta^2$ induce isomorphisms from the $s$-line to the
$s+2$-line for $s > 0$, and so this forces the identities $d_3(b_1 b_3)
= 0$ and $d_3(\zeta b_3) = \zeta^4 b_1 b_3$.

Finally, applying $d_3$ to the identity $f(b_i) = 0$ gives rise to the
identity
\[
0 = d_3(b_6) (b_1^6 + b_1^3 b_3 + b_3^2) + (b_1^6 + b_1^3 b_3 +
b_3^2) \zeta^3 b_1 b_6,
\]
which forces the desired differential on $b_6$.
\end{proof}

It is convenient to write a class on the zero-line in the form
$b_1^{4k - 3l} b_3^l$ or $b_1^{2 + 4k - 3l} b_3^l b_6$ if it is in degree
congruent to $0$ mod $8$, and in the form $b_1^{-2 + 4k - 3l} b_3^{l}$
or $b_1^{4k - 3l} b_3^l b_6$ if it is in degree equivalent 
to $4$ mod $8$.  In these terms we have
\begin{align*}
d_3(b_1^{2 + 4k - 3l} b_3^l) &= \eta^3 \cdot b_1^{4k - 3l} b_3^l,\\
d_3(b_1^{4(k+1) - 3l} b_3^l b_6) &= \eta^3 \cdot b_1^{2 + 4k - 3l} b_3^l b_6.
\end{align*}
More, we have differentials
\begin{align*}
d_3(\zeta^{4k+2}) &= \eta \cdot \zeta^{4k+4}\text{ and}\\
d_3(\zeta^{4k} b_6) &= \eta \cdot \zeta^{4k+2} b_6.
\end{align*}
Mod $2$, we then find $d_3$ is injective on classes where $t - 2s \equiv 4$
mod $8$.

A schematic diagram of this differential appears in
Figures~\ref{fig:e3-page} and \ref{fig:e3-page-a6}, broken up
according to whether the classes are multiples of $b_6$.  Note that an
arrow indicates that {\em all} classes in the indicated degree support
a differential.
\begin{figure}[h]
\centerline{\includegraphics[width=5in]{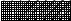}}
\caption{$E_3$-page, classes which are not multiples of $b_6$}
\label{fig:e3-page}
\end{figure}
\begin{figure}[h]
\centerline{\includegraphics[width=5in]{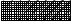}}
\caption{$E_3$-page, classes which are multiples of $b_6$}
\label{fig:e3-page-a6}
\end{figure}

The surviving classes on the zero-line are generated by
\[
b_1^4, b_1 b_3, b_3^4, b_1^2 b_6, b_3^2 b_6, 2b_1^2, 2b_3^2, 2b_6;
\]
the classes which are not multiples of $2$ support $\eta^2$-multiples.
In filtrations above $2$, almost all classes are annihilated.  The
remaining ones are all $\nu$-multiples of the classes $\eta^k
\zeta^{4l}$ and $\eta^k \zeta^{4l+2} b_6$ for $0 \leq k \leq 2$,
$\zeta^l b_3^{4k-l}$, and $\zeta^l b_3^{4k-l+2} b_6$.  The remaining
classes on the $1$-line and $2$-line consist only of the
$b_3^4$-multiples of the following classes:
\[
\zeta^2 b_3^2, \zeta b_3^3, \eta \zeta b_3^3, \zeta b_3 b_6, \zeta^2
b_6, \eta \zeta b_3 b_6
\]

Multiplication by $b_3^4$, $\zeta^4$, and $\nu$ are injective in
filtrations greater than or equal to three.
\begin{prop}
The only nonzero $d_7$-differentials in the homotopy fixed-point
spectral sequence are
\begin{align*}
  d_7(\zeta^k b_3^l) &= \zeta^{k+7} b_3^{l+2}
  &\text{ if }3l - k \equiv 4 \mod 8,\text{ and}\\
  d_7(\zeta^k b_3^l b_6) &= \zeta^{k+7} b_3^{l+2} b_6
  &\text{ if }3l-k \equiv 6 \mod 8.
\end{align*}
\end{prop}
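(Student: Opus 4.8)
The plan is to deduce these differentials from the $d_7(\zeta^4) = \zeta^{11} a_3$ differential established earlier, together with the multiplicative structure of the $E_7$-page and a counting argument. First I would record that, by the previous discussion, the $E_4 = E_7$-page above filtration $1$ is quite sparse: in filtrations $\geq 2$ it is built from $a_3^4$-multiples of the finite list $\zeta^2 a_3^2,\ \zeta a_3^3,\ \eta\zeta a_3^3,\ \zeta a_3 a_6,\ \zeta^2 a_6,\ \eta\zeta a_3 a_6$, together with the $\nu$-multiples of $\eta^k\zeta^{4l}$ and $\eta^k\zeta^{4l+2}a_6$; and multiplication by $a_3^4$, by $\zeta^4$, and by $\nu$ are all injective there. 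So it suffices to pin down the differential on a few generating classes and propagate by the Leibniz rule.

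The key computation is $d_7(\zeta^4) = \zeta^{11} a_3$. Applying Leibniz against $a_3^l$ (which is a $d_7$-cycle once $l$ is large enough that $a_1$-divisibility obstructions have been cleared, since $d_3(a_3^2)$ already involved $a_1$ and those $a_1$-classes are gone by the $E_7$-page) gives $d_7(\zeta^4 a_3^l) = \zeta^{11} a_3^{l+1}$ in the appropriate congruence class; rewriting $\zeta^4 a_3^l$ in the normalized form $\zeta^k a_3^l$ with the constraint coming from the degree bookkeeping yields the stated condition $3l - k \equiv 4 \bmod 8$. Multiplying further by $a_6$ (a $d_7$-cycle, since $d_3(a_6) = \zeta^3 a_1 a_6$ is already killed and $a_6$ survives to $E_7$ up to the relation $f(a_6)=0$) shifts the relevant congruence by the internal degree of $a_6$, producing the second family with $3l - k \equiv 6 \bmod 8$. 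The claim that these are \emph{all} the nonzero $d_7$'s then follows because every other class on the $E_7$-page either lies on the zero-line (no room for a $d_7$ to land, or it is one of the permanent cycles $a_1^4, a_1a_3, a_3^4, a_1^2 a_6$, etc., detected by genuine homotopy classes), or is a $\nu$-multiple $\eta^k\zeta^{4l}\nu$ or $\eta^k\zeta^{4l+2}a_6\nu$ coming from the sphere, which must be a permanent cycle by naturality of the homotopy fixed point spectral sequence along $\pi_*\mb S \to e^{h\mb Z/2}$.

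The main obstacle I anticipate is verifying that the target classes $\zeta^{k+7}a_3^{l+2}$ (and their $a_6$-multiples) are actually nonzero on the $E_7$-page — i.e.\ that they were not already hit by a $d_3$ — and conversely that the source classes genuinely survive to $E_7$; this is where one must be careful about the mod-$2$ bookkeeping, since after $d_3$ the zero-line retains $2$-divisible classes like $2a_1^2, 2a_3^2, 2a_6$ and one needs the differential to be nonzero \emph{after} passing to the $E_7$-page rather than merely nonzero as a map of the $E_3$-page. I would handle this by using the injectivity of multiplication by $a_3^4$ and by $\zeta^4$ in high filtration to reduce to the single seed differential $d_7(\zeta^4)=\zeta^{11}a_3$, whose nonvanishing was already secured from the relation $\zeta^8\nu = 0$ compared against the sphere; everything else is then bookkeeping with the Leibniz rule. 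A secondary check is that no \emph{further} differentials are forced among the listed classes by multiplicativity (so the list is not self-contradictory), which amounts to observing that the targets of the claimed $d_7$'s are themselves in the span of the propagated family, consistent with $d_7 \circ d_7 = 0$.
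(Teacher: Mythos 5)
Your handling of the first family and of the completeness claim tracks the paper: the seed is $d_7(\zeta^4)=\nu\zeta^8$, one uses $d_7(\nu)=0$ and the Leibniz rule to propagate, and the remaining classes are permanent cycles because the only possible targets of a $d_7$ are infinite $\nu$-towers. The gap is in the $a_6$-family. You propose to obtain it by multiplying the first family by $a_6$, asserting both that $a_6$ is a $d_7$-cycle and that this ``shifts the relevant congruence by the internal degree of $a_6$.'' Those two claims are incompatible: the Leibniz rule against a cycle does not change the congruence condition on $(k,l)$, so your mechanism would predict sources at $3l-k\equiv 4 \bmod 8$ for the $a_6$-multiples as well, not the stated $3l-k\equiv 6 \bmod 8$. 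Worse, the argument is essentially vacuous: because $d_3(a_6)=\zeta^3 a_1 a_6\neq 0$, the monomials $\zeta^k a_3^l a_6$ that survive to $E_7$ are \emph{not} the products of surviving classes $\zeta^k a_3^l$ with $a_6$ (for instance $\zeta^2 a_6$ survives while $\zeta^2$ already dies at $E_3$, and $\zeta^2 a_3^2 a_6$ dies while $\zeta^2 a_3^2$ survives). So ``$d_7(x)\cdot a_6$'' only produces statements about classes that are already zero on the $E_7$-page and says nothing about the classes the second family actually concerns.

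The missing ingredient is the paper's use of the defining relation $f(a_6)=0$. Multiplying it by $\zeta^4$ produces, on the $E_7$-page, the identity $(\zeta^2 a_6)^2+(\zeta^2 a_3^2)(\zeta^2 a_6)+(\zeta^2 a_3^2)^2=0$ among classes that genuinely survive; applying $d_7$ and feeding in the already-established $d_7(\zeta^2 a_3^2)=\nu\zeta^4(\zeta^2 a_3^2)$ forces $d_7(\zeta^2 a_6)=\nu\zeta^4(\zeta^2 a_6)$. That single seed, propagated by the Leibniz rule against $\zeta^4$, $\nu$, and $a_3^4$ (which \emph{are} cycles acting injectively in high filtration), yields the second family with its shifted congruence. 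Without an argument of this kind the $a_6$-half of the proposition is unproved, and the naive version you give would in fact contradict it.
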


This differential is pictured in Figures~\ref{fig:e7-page} and
\ref{fig:e7-page-a6}; dashed lines indicate multiplication by $\nu$.
It may be more concisely expressed as saying that $\nu$-towers in degrees
$t-2s \equiv 8$ mod $16$ support differentials which hit the
$\nu$-towers in degrees $t-2s \equiv 0$ mod $16$.
\begin{figure}[h]
\centerline{\includegraphics[width=5in]{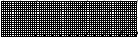}}
\caption{$E_7$-page, classes which are not multiples of $b_6$}
\label{fig:e7-page}
\end{figure}
\begin{figure}[h]
\centerline{\includegraphics[width=5in]{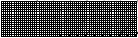}}
\caption{$E_7$-page, classes which are multiples of $b_6$}
\label{fig:e7-page-a6}
\end{figure}

\begin{proof}
We first note that all possible targets of a differential are elements
which are not $\nu$-torsion, and so the only possible sources of a
differential are the non-$\nu$-torsion elements $\zeta^k b_3^l$ and
$\zeta^k b_3^l b_6$ which have survived to $E_7$.

We have differentials $d_7 \zeta^4 = \nu \zeta^8$, and $d_7 \nu = 0$;
by the Leibniz rule, we then find the desired differentials on all classes of
the form $\zeta^k b_3^l$.

Multiplying the equation $f(b_i) = 0$ by $\zeta^4$ gives the identity
on $E_7$-pages
\[
(\zeta^2 b_6)^2 + (\zeta^2 b_3^2) (\zeta^2 b_6) + (\zeta^2 b_3^2)^2 = 0.
\]
The Leibniz rule then implies $d_7(\zeta^2 b_3^2) (\zeta^2 b_6) =
\zeta^2 b_3^2 d_7(\zeta^2 b_6)$, and thus $d_7(\zeta^2 b_6) = \nu
\zeta^4 (\zeta^2 b_6)$.  The Leibniz rule then determines the
differentials on all elements $\zeta^k b_3^l b_6$.
\end{proof}

There is no room for further differentials; the only remaining classes
in high filtration are $\zeta^{8k}$ and $\zeta^{8k+6} b_6$, which are
in even total degree and cannot be a target of a differential.

The following assembles the final result.
\begin{thm}
  The nonnegative-degree homotopy groups of $e^{h\mb Z/2}$ fit into a
  short exact sequence $0 \to K \to \pi_* e^{h\mb Z/2}\langle
  0\rangle \to R \to 0$. The terms in this sequence are given as
  follows.
  \begin{itemize}
  \item $R$ is the pullback in the diagram of rings
\[
\xymatrix{
R \ar[r] \ar[d] &
\mb Z[b_1^2, b_3^2, b_6, \eta] / (2\eta, \eta^3, f(b_i)) \ar[d] \\
\langle b_1^4, b_1 b_3, b_1 b_3^5, b_3^8, b_1^2 b_6, b_1 b_3^3 b_6,
b_3^6 b_6, \eta \rangle \ar@{^(->}[r]&
\mb Z/2[b_1^2, b_1 b_3, b_3^2, b_6, \eta] / (2\eta, \eta^3, f(b_i)),
}
\]
where the lower map is the inclusion of the subring generated by the given
elements.
  \item $K$ is freely generated over $\mb Z/2[b_3^8]$ by the following
    classes:
\begin{align}
\label{eq:spurious-classes}
&\nu, \nu^2,
&&\nu (b_3^6 b_6), \nu^2 (b_3^6 b_6), \notag\\
&\zeta b_3^3, \eta (\zeta b_3^3),\nu (\zeta b_3^3),
&&\zeta b_3 b_6, \eta (\zeta b_3 b_6),\nu (\zeta b_3 b_6),\\
&\eta (b_3^4), \eta^2 (b_3^4),
&&\eta (b_3^2 b_6), \eta^2 (b_3^2 b_6),\notag\\
&\zeta^2 b_3^6, \nu (\zeta^2 b_3^6),
&&\zeta^2 b_3^4 b_6, \nu (\zeta^2 b_3^4 b_6).\notag
\end{align}
  \end{itemize}
\end{thm}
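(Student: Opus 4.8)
The plan is to read off the $E_\infty$-page of the homotopy fixed point spectral sequence for $e$ from the $d_3$- and $d_7$-differentials computed above, together with the observation that there is no room for any $d_{4k+3}$ with $k\geq 2$, and then to assemble the answer by sorting the survivors and resolving the extension problems. The first step is organizational. On the zero line the classes that survive both $d_3$ and $d_7$ are, modulo~$2$, exactly the subring appearing as the lower-left corner of the displayed square: $d_3$ is injective on the classes with $t-2s\equiv 4$ mod $8$, and $d_7$ then pairs off half of what remains, so that the $a_3^4$-periodicity visible on $E_4$ is cut to $a_3^8$-periodicity. Since every differential out of the zero line lands in $2$-torsion (because $2\zeta=0$), one integrally keeps, in addition, twice each remaining even-degree monomial $a_1^2$, $a_3^2$, $a_6$. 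In positive filtration the survivors are the $\eta$-multiples of all these, the $\zeta$-towers $\zeta^{8k}$ and $\zeta^{8k+6}a_6$, the half of the $\nu$-towers that escape $d_7$ (those in degrees $t-2s\equiv 0$ mod $16$), and the finite list of leftover $1$- and $2$-line classes together with their $\eta$- and $\nu$-multiples recorded in~(\ref{eq:spurious-classes}).

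For the subring generated by the zero line and $\eta$, the observation about $2$-torsion shows that an element of the ``naive'' guess ring survives to $E_\infty$ precisely when its mod-$2$ reduction lies in the subring of zero-line survivors, which is exactly the asserted pullback. To promote this to an isomorphism of rings I would resolve the residual additive and multiplicative extensions by applying homotopy fixed points to the equivariant Mayer--Vietoris square~(\ref{eq:mayervietoris}): after inverting $a_1$ or $a_3$ the supersingular point disappears, the localization is even-periodic, and its homotopy fixed point spectral sequence collapses to the familiar $KO$-type pattern, in which the relevant facts --- that only twice the Bott-type class survives (accounting for the $2a_1^2$, $2a_3^2$, $2a_6$), that $\eta^3$ and $\eta\cdot(2a_1^2)$ vanish, and that $a_6$ still satisfies $f(a_6)$ --- can be read off directly. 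Since homotopy fixed points preserve pullbacks and the fourth corner is the common mod-$2$ localization, these glue to the extensions in $R$.

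It remains to identify the complementary ideal $K$. Every element of $K$ vanishes after inverting either $a_1$ or $a_3$, so $K$ is built from the surviving $\nu$-towers and the leftover $1$- and $2$-line classes, and it is $2$-torsion. The $d_7$-pattern, together with the injectivity of multiplication by $a_3^8$, $\zeta^4$ and $\nu$ in filtration $\geq 2$ noted above, shows this whole part is $16$-periodic in $t-2s$ and hence free as a module over $\mathbb{Z}/2[a_3^8]$; a degree count then matches its generators against the classes of~(\ref{eq:spurious-classes}), using that $a_3^8$ is the least surviving power of $a_3$ and that $\nu^3=0$ and $\eta^3=0$ in this ring. The step I expect to be the genuine obstacle is ruling out hidden extensions crossing filtration: showing that the listed generators of $K$ lift honestly and remain $\mathbb{Z}/2[a_3^8]$-independent in $\pi_*$, and that $\pi_* e^{h\mathbb{Z}/2}\langle 0\rangle$ is no larger than the claimed extension of $R$ by $K$. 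I would dispatch this one family at a time --- anything visible in a localization is controlled by the Mayer--Vietoris square, while the classes invisible there, namely the $\nu$- and $\eta$-towers and the $\zeta a_3^3$ and $\zeta a_3 a_6$ families, are pinned down by the images of $\eta$ and $\nu$ under the map from $\pi_*\mathbb{S}$ established above together with the sphere's relations among $\eta$ and $\nu$ imported along that map.
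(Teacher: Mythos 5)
Your proposal is correct and follows essentially the same route as the paper: the paper offers no separate argument for this theorem beyond the phrase ``the following assembles the final result,'' i.e.\ it is exactly the read-off of the $E_\infty$-page from the $d_3$- and $d_7$-computations of the preceding propositions, with $R$ recording the zero-line and its $\eta$-multiples (the pullback encoding which classes lift integrally versus only after multiplication by $2$) and $K$ recording the truncated $\nu$-towers and leftover low-filtration classes. Your additional use of the localized fixed points via the square~(\ref{eq:mayervietoris}) to pin down the extensions in $R$ is consistent with (and anticipates) what the paper does in the subsequent ``Nonconnective fixed points'' subsection.
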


\subsection{Nonconnective fixed points}

We can use the results of the previous section to determine the
homotopy fixed point spectrum for the action of $\mb Z/2$ on $E$.

Due to the size and shape of the vanishing regions in the homotopy
fixed point spectral sequences of the previous section, we observe
that the localizations $(a_1^{-1} e)^{h\mb Z/2}$ and $(a_3^{-1}
e)^{h\mb Z/2}$ have homotopy groups which are calculated by the
localizations of the $E_\infty$ terms of the homotopy fixed-point
spectral sequences.  We write these as $b_1^{-1} \TAF^D$ and
$b_3^{-1} \TAF^D$, though they are actually formed by inverting
$b_1^4$ and $b_3^8$.  The elements $\zeta^{8k}$ and $\zeta^{8k+6} b_6$
are destroyed in these localizations.

The homotopy groups of $b_1^{-1} \TAF^D$ form the ring which fits into
the following pullback diagram.
\[
\xymatrix{
\pi_* b_1^{-1} \TAF^D  \ar[r] \ar[d] &
\mb Z[b_1^{\pm 2}, b_1 b_3, b_6, \eta] / (2\eta, \eta^3, f(b_i)) \ar[d] \\
\langle b_1^{\pm 4}, b_1 b_3, b_1^2 b_6, \eta \rangle \ar@{^(->}[r]&
\mb Z/2[b_1^{\pm 2}, b_1 b_3, b_6, \eta] / (2\eta, \eta^3, f(b_i)),
}
\]

The homotopy groups of $b_3^{-1} \TAF^D$ are more convenient to study
by introducing $y = b_3^{-2} b_6$, which is a degree zero term
satisfying a quadratic polynomial.  The homotopy groups are free on
$\{1, y\}$ over a subring $S$ which essentially coincides with the
calculation of Mahowald-Rezk \cite{mahowald-rezk-level3}.
This subring lives in a short exact sequence $0 \to L \to
S \to b_3^{-1} S \to 0$.  Here $b_3^{-1} S$ fits into
the pullback diagram
\[
\xymatrix{
b_3^{-1} S \ar[r] \ar[d] &
\mb Z[b_1^2, b_3^{\pm 2}, \eta] / (2\eta, \eta^3) \ar[d] \\
\langle b_1 b_3, b_1 b_3^5, b_3^{\pm 8}, \eta \rangle \ar@{^(->}[r]&
\mb Z/2[b_1^2, b_1 b_3, b_3^{\pm 2}, \eta] / (2\eta, \eta^3),
}
\]
and $L$ is a free $\mb Z/2[b_3^{\pm 8}]$-module on the classes from
equation~(\ref{eq:spurious-classes}) that do not involve $b_6$.

The long exact sequence on homotopy groups induced by the homotopy
pullback can then be employed to produce a description of the homotopy
groups of $\TAF^D$.  There is a short exact sequence $0 \to b_3^{-1} K
\to \pi_* \TAF^D \to R' \to 0$, where the ring $R'$ agrees with $R$ in
nonnegative degrees.

\section{Homotopy groups of $(\TAF^D)^{w_{15}}$}

In this section we compute the homotopy groups of $\TAF^D$ after
taking homotopy fixed points with respect to the Atkin-Lehner operator
$w_{15}$.  Much like the previous section, this largely coincides with
the Mahowald-Rezk computation, but adds some new $v_1$-periodic
classes.

We first consider the homotopy fixed-point spectral sequence for the
spectrum $e^{w_{15}}$; here $e$ is the connective spectrum of
Proposition~\ref{prop:conncover}. Recall that $\omega \in {\cal O}_K$
is a primitive third root of unity.

\begin{prop}
The homotopy fixed point spectral sequence for the homotopy groups of
$e^{w_{15}}$ degenerates at $E_2 = E_\infty$, with target
\begin{equation}
  \label{eq:w15-connective}
{\cal O}_K[a_1,a_3,\tau] / (2\tau, (a_1^3 - \omega a_3)(a_1^3 -
\omega^2 a_3)\tau).
\end{equation}
The homotopy fixed point spectral sequences for $a_1^{-1} e$,
$a_3^{-1} e$, and $(a_1 a_3)^{-1} e$ degenerate in
the same way.
\end{prop}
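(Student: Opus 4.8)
The plan is to identify the $E_2$-page of the homotopy fixed point spectral sequence for the $\mb Z/2$-action associated to $w_{15}$ (after its normalization, which fixes $a_1$ and $a_3$, as established in the preceding proposition), and then argue that there is no room for differentials. First I would recall from Corollary~\ref{cor:coefficientring} and the preceding propositions that $\pi_* e = \mb Z_2[a_1,a_3,a_6]/f(a_6)$, and that the normalized $w_{15}$-involution fixes $a_1$ and $a_3$ and sends $a_6$ to its conjugate root $-a_6-(a_1^6+a_1^3a_3+a_3^2)$. Writing $b = a_1^6 + a_1^3 a_3 + a_3^2$, the subring $\mb Z_2[a_1,a_3]$ is fixed, and on the rank-2 free module $\{1,a_6\}$ over this subring the involution acts by $1 \mapsto 1$, $a_6 \mapsto -a_6 - b$. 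I would then compute the group cohomology of $\mb Z/2$ with these coefficients: the invariant submodule is $\mb Z_2[a_1,a_3]$ together with $2a_6 + b$ (equivalently, the class $2a_6 + b$, whose image generates the $a_6$-part of $H^0$), while $H^1$ is detected by the coinvariants, where the relevant cokernel is $\mb Z_2[a_1,a_3]/(2,b)$-torsion. This produces the class $\tau$ in bidegree appropriate to $H^1$ with $2\tau = 0$ and $b\tau = 0$, yielding exactly the ring in equation~(\ref{eq:w15-connective}).

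Next I would check degeneration. The key structural observation is a parity/sparseness argument: the action here is much milder than the one analyzed in the previous section (the $a_1,a_3$ generators are \emph{fixed} rather than negated), so the cohomology of $\mb Z/2$ contributes classes only in a restricted range of filtrations. Concretely, $H^s$ for $s \geq 1$ is entirely $2$-torsion and is a cyclic module over the fixed subring modulo $(2,b)$; all such classes are $\tau$-multiples of zero-line classes. Since $\tau$ is not a permanent-cycle target issue — one checks that the only classes in positive filtration are monomials $m\tau$ with $m \in \mb Z_2[a_1,a_3]$, all concentrated on a single line $s=1$ after accounting for the relation $b\tau=0$ — there are simply no pairs of classes in the right relative bidegree for a differential $d_r$ with $r \geq 2$ to be nonzero. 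I would make this precise by noting the $E_2$-term is generated in filtrations $0$ and $1$ only, so the sole candidate differentials would run from filtration $0$ to filtration $\geq 2$, of which there are none, or be forced to vanish by $2$-torsion considerations together with the fact that the zero-line $\mb Z_2[a_1,a_3,a_6]/f(a_6)$ consists of permanent cycles (being in the image of $\pi_* e$).

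Finally, the localized statements follow formally: inverting $a_1$, $a_3$, or $a_1 a_3$ commutes with taking the homotopy fixed point spectral sequence (these are localizations at elements of $\pi_* e$ that are themselves permanent cycles and on which the involution acts by fixing them), so the same $E_2 = E_\infty$ collapse holds, with $\pi_* e$ replaced by its corresponding localization and the relation $b\tau = 0$ (note $b$ becomes a unit times a polynomial, but the torsion class $\tau$ persists appropriately). The main obstacle I anticipate is bookkeeping the precise internal degree of $\tau$ and verifying that the relation $(a_1^6+a_1^3a_3+a_3^2)\tau = 0$ — rather than, say, $a_1\tau=0$ or some other ideal — is the correct one; this requires carefully tracking which element of $\mb Z_2[a_1,a_3]$ annihilates the generator of the coinvariant module, i.e. computing that $(\operatorname{id}-w_{15})(a_6) = 2a_6 + b \equiv b \pmod 2$ and hence the cokernel of $(\operatorname{id} - w_{15})$ on the $a_6$-line is $\mb Z_2[a_1,a_3]/(2,b)$.
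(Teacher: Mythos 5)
Your overall strategy (compute $H^*(\mb Z/2;\pi_* e)$ for the normalized $w_{15}$-action and then argue collapse by sparseness) matches the paper's, but the group-cohomology computation contains a genuine error that breaks the degeneration argument. Write $b = a_1^6+a_1^3a_3+a_3^2$ and $\sigma$ for the involution, so that $\pi_*e$ is free on $\{1,a_6\}$ over $S=\mb Z_2[a_1,a_3]$. Then $(1-\sigma)(p+qa_6) = q(2a_6+b)$ and $N(p+qa_6)=(1+\sigma)(p+qa_6) = 2p - qb$. In particular $2a_6+b$ is \emph{anti}-invariant, not invariant ($\sigma(2a_6+b)=-(2a_6+b)$), so $H^0=S$ exactly; and $\ker N = S\cdot(2a_6+b)=\mathrm{im}(1-\sigma)$, so $H^1=0$. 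The torsion class $\tau$ lives in $H^2 = (\pi_*e)^\sigma/N(\pi_*e) = S/(2,b)$, i.e.\ in filtration $2$ --- the paper takes $\tau$ to be the generator of $H^2(\mb Z/2;\mb Z)$, and Proposition~\ref{prop:notau} later relies on $\tau$ sitting in filtration $2$ and homotopy degree $-2$. Your placement of $\tau$ in $H^1$ is also internally inconsistent: for a finite cyclic group the positive-degree cohomology is $2$-periodic, so a nonzero $H^1$ would force nonzero classes in every odd filtration, never a single line $s=1$.

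Consequently your collapse argument (``everything is generated in filtrations $0$ and $1$, so there is no room'') does not apply. The correct sparseness statement, and the one the paper uses, is that the $E_2$-term is concentrated in even filtration \emph{and} even internal degree, hence in even total degree $t-s$, so every $d_r$ (which lowers total degree by $1$) must vanish. Your fallback claim that the zero line consists of permanent cycles because it is ``in the image of $\pi_*e$'' is also not valid reasoning: the edge homomorphism runs from $\pi_*e^{h\mb Z/2}$ to $\pi_*e$, and in the companion spectral sequence for the other $\mb Z/2$-action the zero-line classes $a_1^2$, $a_3^2$, $a_6$ do support $d_3$-differentials. The reduction of the localized statements to the unlocalized one is fine once the above is corrected.
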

\begin{proof}
By Proposition~\ref{prop:w15-action}, the action of $w_{15}$ fixes
$a_1$ and $a_3$, but sends $a_6$ to $-a_6-(a_1^6 + a_1^3 a_3 +
a_3^2)$.  A direct calculation of the group cohomology gives the
$E_2$-term described in equation~(\ref{eq:w15-connective}), where
$\tau$ is the generator of $H^2(\mb Z/2; \mb Z)$.  The terms are
concentrated in even degrees, and so the spectral sequence collapses.

The corresponding calculation for the localizations follows in the
same fashion.
\end{proof}

\begin{prop}
\label{prop:twoseries}
The class $\tau$ of equation~(\ref{eq:w15-connective}) lifts to a
class in the homotopy of $e^{w_{15}}$ satisfying $[2](\tau) = 0$,
where $[2](x)$ is the $2$-series of the formal group law of $e$.
\end{prop}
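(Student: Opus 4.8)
The plan is to realize a lift of $\tau$ as the image, in $\pi_* e^{w_{15}}$, of the first Chern class of the complexified sign line bundle on $B\mb Z/2$ taken in a complex orientable ring that maps suitably to $e$; the relation $[2](\tau)=0$ then comes directly from the triviality of the square of that line bundle.

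First, since the homotopy fixed point spectral sequence for $e^{w_{15}}$ collapses at $E_2=E_\infty$, the class $\tau\in H^2(\mb Z/2;\pi_0 e)$ is a permanent cycle which is not the target of a differential, so it is detected by some element of $\pi_{-2}e^{w_{15}}$; the task is to choose such a lift whose $2$-series vanishes. Observe that $e$, $e^{w_{15}}$, and the subring $e'$ introduced below are all even --- their $E_\infty$-pages, hence their homotopy groups, lie in even total degree --- and therefore complex orientable.

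The heart of the argument is to produce a connective $E_\infty$-subring $e'\subseteq e$ with $\pi_* e'=\mb Z_2[a_1,a_3]$ on which $w_{15}$ acts as the identity; this should be available because $w_{15}$ fixes $a_1$ and $a_3$ on ${\cal Y}$. Granting it, the inclusion $e'\to e$ is $w_{15}$-equivariant for the trivial action on the source, so taking homotopy fixed points gives a ring map $F(B\mb Z/2_+,e')=(e')^{h\mb Z/2}\to e^{h\mb Z/2}=e^{w_{15}}$; fix complex orientations of $e'$, $e^{w_{15}}$, and $e$ compatibly along $e'\to e^{w_{15}}\to e$. Let $\lambda\co B\mb Z/2\to\mb{CP}^\infty$ classify the complexified sign representation $L$ and put $x:=c_1^{e'}(L)=\lambda^*(\mathrm{orientation})\in (e')^2(B\mb Z/2_+)=\pi_{-2}(e')^{h\mb Z/2}$. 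Since $L^{\otimes 2}$ is trivial and $c_1^{e'}(L^{\otimes 2})=[2]\bigl(c_1^{e'}(L)\bigr)$, we get $[2](x)=0$; equivalently this is the identity $[2](x)=0$ holding in $(e')^*(B\mb Z/2_+)=(\pi_* e')[[x]]/[2](x)$. Let $\tilde\tau\in\pi_{-2}e^{w_{15}}$ be the image of $x$. The ordinary first Chern class of $L$ generates $H^2(B\mb Z/2;\mb Z_2)$, so $x$ has filtration exactly $2$ in the spectral sequence for $(e')^{h\mb Z/2}$ and is detected by the generator of $H^2(\mb Z/2;\pi_0 e')$; since $\pi_0 e'=\pi_0 e=\mb Z_2$ compatibly, $\tilde\tau$ is detected by $\tau$. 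Finally $[2](\tilde\tau)$ is the image of $[2](x)=0$ under the ring map above, so $[2](\tilde\tau)=0$, which is the assertion.

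The step I expect to be the main obstacle is the construction of the trivial-action subring $e'$: $w_{15}$ does \emph{not} act trivially on all of $\pi_* e$ (it sends $a_6$ to $-a_6-(a_1^6+a_1^3a_3+a_3^2)$), so $F(B\mb Z/2_+,e)$ cannot be used directly, and one must exhibit $e'$ with $\{a_1,a_3\}\subseteq\pi_* e'$ together with a homotopy $w_{15}|_{e'}\simeq \mathrm{id}$ --- for instance as the image of a level-$N$ topological automorphic forms spectrum on which $w_{15}$ visibly acts as the identity, or as the connective cover of a suitable localization. Should this be awkward, one can instead compute $[2](\tilde\tau)$ directly in the $w_{15}$-homotopy fixed point spectral sequence in the style of \cite{mahowald-rezk-level3}: the $2$-series has the same mod-$2$ leading behaviour as in the elliptic case, $[2](x)\equiv v_1 x^2+v_2 x^4$ modulo higher-order terms with $v_1$ a unit multiple of $a_1$ and $v_2$ a unit multiple of $a_3$, and then induction on filtration --- using multiplicativity of the spectral sequence, the hidden extension relating $2\tilde\tau$ to $a_1\tilde\tau^2$, and strong convergence --- shows that $[2](\tilde\tau)$ lies in arbitrarily high filtration and hence vanishes.
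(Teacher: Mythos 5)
Your core mechanism is exactly the paper's: exhibit a lift of $\tau$ as the restriction to $B\mb Z/2$ of a complex orientation class, so that $[2](\tau)=c_1(L^{\otimes 2})=0$ because the complexified sign bundle is $2$-torsion. Where you diverge is in what you feed into this mechanism, and that is where the gap is. You route the argument through a connective $E_\infty$-subring $e'\subseteq e$ with $\pi_*e'=\mb Z_2[a_1,a_3]$ and trivial $w_{15}$-action. Nothing in your argument actually uses $a_1,a_3\in\pi_*e'$ --- you only use $\pi_0e'=\mb Z_2$ and the Chern class --- and producing such an $e'$ is a nontrivial realization problem (there is no evident $E_\infty$ ring with homotopy $\mb Z_2[a_1,a_3]$ mapping to $e$, let alone one on which $w_{15}$ is homotopy-trivial as an $E_\infty$ map). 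The paper sidesteps this entirely by taking the source to be $MU$ with the trivial $\mb Z/2$-action: since $\pi_*e^{w_{15}}$ is concentrated in even degrees, $e^{w_{15}}$ is complex orientable, and the orientation $MU\to e^{w_{15}}=e^{h\mb Z/2}$ is adjoint to precisely the equivariant map $MU\to e$ you need. Passing to homotopy fixed points gives $MU^*(B\mb Z/2)\to\pi_{-*}e^{w_{15}}$, and the restricted orientation class in $MU^2(B\mb Z/2)$, which satisfies $[2](x)=0$ there, maps to a class detected by $\tau$. So your proof is repaired by replacing $e'$ with $MU$; as written, its main step is unsupported.

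Your fallback argument does not rescue it. The hidden extension relating $2\tilde\tau$ to $a_1\tilde\tau^2$ is a \emph{consequence} of $[2](\tilde\tau)=0$ (it is how the paper derives $\tau^2=\alpha\tau$ in the subsequent corollary), not an independently available input, so using it to show $[2](\tilde\tau)$ has arbitrarily high filtration is circular. Without some external identity of exactly the Chern-class type, the spectral sequence alone only gives $[2](\tilde\tau)=0$ modulo higher filtration, which is not the statement being proved.
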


\begin{proof}
The spectrum $e^{w_{15}}$, being a ring spectrum with homotopy concentrated in
even degrees, is complex orientable; we may recast the resulting map
$MU \to e^{w_{15}}$ as a $\mb Z/2$-equivariant map $MU \to e$, where
$\mb Z/2$ acts trivially on $MU$ and by the Atkin-Lehner involution
$w_{15}$ on $e$.  The resulting map on homotopy fixed points
is a map $MU^*(B\mb Z/2) \to \pi_* e^{w_{15}}$, and the class $\tau$
lifts to the orientation in $MU^2(\mb{CP}^\infty)$ whose image in
$MU^2(B\mb Z/2)$ satisfies $[2](\tau) = 0$.
\end{proof}

\begin{cor}
\label{cor:degeneration}
The class $\tau$ lifts to an element in the homotopy of the spectrum
$E^{w_{15}}$ on which $a_1$ and $a_3$ act invertibly.
\end{cor}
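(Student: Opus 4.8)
The plan is to combine Proposition~\ref{prop:twoseries} with the degeneration statements preceding it. By Proposition~\ref{prop:twoseries} the class $\tau$ lifts to a class $\tilde\tau\in\pi_{-2}e^{w_{15}}$ with $[2](\tilde\tau)=0$, detected by $\tau$ in bidegree $(s,t)=(2,0)$ of the (degenerate) homotopy fixed point spectral sequence for $e^{w_{15}}$. Since the chosen lift of $w_{15}$ to ${\cal Y}$ fixes the forms $a_1$ and $a_3$, both lie in $\pi_* e^{w_{15}}$, hence give self-maps of $E^{w_{15}}=E^{h(1\times\mb Z/2)}$, and the spectrum in the statement is $(a_1a_3)^{-1}E^{w_{15}}$. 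I would push $\tilde\tau$ forward along the ring maps $e^{w_{15}}\to E^{w_{15}}\to(a_1a_3)^{-1}E^{w_{15}}$ and check that its image is still detected by $\tau$, so that it is the desired lift.

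This reduces to showing that $\tau$ survives to $E_\infty$ in the homotopy fixed point spectral sequence computing $\pi_*(a_1a_3)^{-1}E^{w_{15}}$. First, $\tau$ is nonzero there: because $w_{15}$ acts trivially on $a_1$ and $a_3$, the $E_2$-page is a localization of $H^*(\mb Z/2;\pi_* E)$ in which $\tau$ generates $H^2(\mb Z/2;\pi_0 E)=H^2(\mb Z/2;\mb Z_2)\cong\mb Z/2$, and by Proposition~\ref{prop:w15-action} (pulled back to ${\cal Y}$) the $w_{15}$-action on $\pi_* E$ fixes every monomial in $a_1,a_3$ and sends $a_6$ to $-a_6-(a_1^6+a_1^3a_3+a_3^2)$, so $a_1^k a_3^l\cdot\tau$ is the reduction mod $2$ of $a_1^k a_3^l$ inside a summand isomorphic to $\mb F_2[a_1,a_3]/(a_1^6+a_1^3a_3+a_3^2)$; this ring contains no monomial in the ideal $(a_1^6+a_1^3a_3+a_3^2)$ and remains nonzero after inverting $a_1$ and $a_3$, so $\tau$ is non-nilpotent. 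Second, $\tau$ is a permanent cycle, being one in the degenerate spectral sequence for $e^{w_{15}}$ and hence in the image spectral sequence. Third, $\tau$ cannot be hit by a differential, since a differential into bidegree $(2,0)$ would originate from bidegree $(2-r,r-1)$ with $r\ge2$, i.e.\ from negative filtration, which is zero. Hence $\tau$ survives, and the leading term of the image of $\tilde\tau$ is this surviving $\tau$.

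I expect the one genuinely technical point to be the identification of the spectral sequence for $(a_1a_3)^{-1}E^{w_{15}}$: one must verify that inverting the $w_{15}$-invariant permanent cycles $a_1,a_3$ on $E^{w_{15}}$ produces a spectrum whose homotopy fixed point spectral sequence is the $(a_1a_3)^{-1}$-localization of the one for $E$, with $E_2$-page $(a_1a_3)^{-1}H^*(\mb Z/2;\pi_* E)$. Since $a_1$ and $a_3$ lift to equivariant self-maps of $E$ acting trivially on homotopy, localizing the defining tower is harmless and the localized spectral sequence still converges by the standard finite-group argument; the point to keep in mind is that this is \emph{not} simply the spectral sequence of $((a_1a_3)^{-1}E)^{w_{15}}$, which (via the equivalences $a_i^{-1}e\simeq a_i^{-1}E$ together with the degeneration results already established) reduces to the connective computation and misses the nonconnective Tate contributions. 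Granting this comparison, the remaining steps are routine bookkeeping, and the relation $[2](\tilde\tau)=0$ of Proposition~\ref{prop:twoseries} transports to $(a_1a_3)^{-1}E^{w_{15}}$ by functoriality — which, together with the surviving $v_1$-periodic $\tau$, is exactly what powers the $KO$-type computation of $(\TAF^D)^{w_{15}}$ announced in the introduction.
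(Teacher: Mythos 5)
You have misread what the corollary asserts, and as a result your argument proves a different (and, for the sequel, insufficient) statement. The claim is not that $\tau$ lifts to the localized spectrum $(a_1a_3)^{-1}E^{w_{15}}$; it is that $\tau$ lifts to an element of $\pi_{-2}E^{w_{15}}$ itself \emph{on which $a_1$ and $a_3$ act invertibly}, i.e.\ the lift generates a $\mb Z_2[a_1^{\pm 1},a_3^{\pm 1}]$-submodule of $\pi_*E^{w_{15}}$. That divisibility is exactly what the next corollary consumes when it exhibits the summand $\mb Z_4[a_1^{\pm 1}]\cdot\tau$ inside $\pi_*E^{w_{15}}$, not inside a localization. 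Your argument establishes only that the image of $\tau$ is nonzero and non-nilpotent after inverting $a_1a_3$, i.e.\ that no monomial $a_1^k a_3^l$ annihilates $\tau$. That is a necessary condition but says nothing about whether $\tau$ is divisible by $a_1$ or $a_3$: compare the module $\mb F_2[u,v]/(u^2+uv+v^2)$, in which $1$ is not in the image of multiplication by $u$ even though it has nonzero image after inverting $uv$.

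The missing idea is the one the paper's proof turns on: the relation $(a_1^6+a_1^3a_3+a_3^2)\tau=0$ from equation~(\ref{eq:w15-connective}) says that, once $a_3$ is inverted, the operator $a_1^3/a_3$ satisfies $x^2+x+1=0$ on the cyclic module generated by $\tau$, hence acts as a primitive third root of unity and in particular invertibly; symmetrically $a_3/a_1^3$ acts invertibly after inverting $a_1$. So $a_1$ and $a_3$ act invertibly on $\tau$ in each corner $(a_1^{-1}E)^{w_{15}}$, $(a_3^{-1}E)^{w_{15}}$, $((a_1a_3)^{-1}E)^{w_{15}}$ of the $w_{15}$-fixed points of the equivariant pullback square~(\ref{eq:mayervietoris}), and the pullback description of $E^{w_{15}}$ then transports the inverse back to $\pi_*E^{w_{15}}$. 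This also dissolves your worry about $(a_1a_3)^{-1}(E^{h\mb Z/2})$ versus $((a_1a_3)^{-1}E)^{h\mb Z/2}$: the paper only ever needs the fixed points of the already-localized spectra appearing in the equivariant pullback, whose spectral sequences degenerate by the same evenness argument. Your observations that $\tau$ is a permanent cycle, cannot be hit, and has nonzero image are fine but peripheral; without the cube-root-of-unity step the invertibility, which is the entire content of the corollary, is not established.
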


\begin{proof}
We have that $(a_1^6 + a_1^3 a_3 + a_3^2)\tau$ is zero in the homotopy
fixed point spectral sequences.  After inverting $a_3$, the action of the
element $a_1^3/a_3$ is as a third root of unity on $\tau$, and hence
invertible.  The element $a_3/a_1^3$ acts as a third root of unity
after inverting $a_1$.  As the action of $a_1$ and $a_3$ on $\tau$ are
invertible after inverting any of $a_1$, $a_3$, or $a_1 a_3$, it is so
in $E^{w_{15}}$.
\end{proof}

\begin{cor}
We have an isomorphism
\[
\pi_* E^{w_{15}} \cong (E_*)^{w_{15}} \oplus ({\cal O}_K[a_1^{\pm 1}])^2 \cdot \tau,
\]
We have $\tau^2 = \alpha \tau$ for $\alpha$ some element congruent to
$2a_1^{-1}$ mod $4$.
\end{cor}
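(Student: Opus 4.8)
The statement has two parts: the additive description of $\pi_*E^{w_{15}}$ and the relation $\tau^2=\alpha\tau$. The plan is to take $w_{15}$-homotopy fixed points of the equivariant Mayer--Vietoris square~\eqref{eq:mayervietoris} --- legitimate since $a_1$ and $a_3$ are $w_{15}$-fixed --- and to compute the three corners $(a_1^{-1}E)^{w_{15}},\ (a_3^{-1}E)^{w_{15}},\ ((a_1a_3)^{-1}E)^{w_{15}}$ directly. By the corollary following Proposition~\ref{prop:conncover} each of these agrees with the corresponding localization of $e^{w_{15}}$, so by the preceding proposition its homotopy fixed point spectral sequence is concentrated in even $q$ with $E_2=E_\infty$. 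To get the precise form, note that in each corner $\pi_*$ is a localization of $\mb Z_2[a_1,a_3,a_6]/f$, free of rank two over the corresponding localization $B$ of $\mb Z_2[a_1,a_3]$ on $\{1,a_6\}$, and by Proposition~\ref{prop:w15-action} $w_{15}$ fixes $B$ and sends $a_6\mapsto -a_6-q$ with $q:=a_1^6+a_1^3a_3+a_3^2$. Hence there is a short exact sequence of $\mb Z/2$-modules $0\to B\to \pi_*\to B^{\mathrm{sgn}}\to 0$ with extension class $[q]\in B/2$, whose long exact sequence identifies the connecting maps $H^{2k-1}(\mb Z/2;B^{\mathrm{sgn}})\to H^{2k}(\mb Z/2;B)$ with multiplication by $q$ on $B/2$. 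The zero locus of $q$ modulo $2$ is the supersingular divisor $P_6=\{a_1=0\}$, so $q$ is a nonzerodivisor on $B/2$, the connecting maps are injective, and $H^*(\mb Z/2;\pi_*)$ is concentrated in even $(p,q)$ with $H^0=B$ and $H^{2k}=B/(2,q)$ for $k\geq 1$; a parity count then makes all differentials vanish. Over the $a_1$-invertible locus $B/(2,q)\cong \mb F_4[a_1^{\pm1}]$, with $\zeta:=a_3a_1^{-3}$ a primitive cube root of unity.

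Now reassemble the square. On the $a_6$-free parts (filtration $0$), taking $w_{15}$-fixed points of the Mayer--Vietoris square is just the $w_{15}$-fixed points of the Mayer--Vietoris square for $E_*$ itself, and so it recovers $(E_*)^{w_{15}}$ (including the odd part, which enters via the cokernel term of the long exact sequence). On the positive-filtration ``$\tau$-part'' --- which in each corner, by the computation above, is $\bigoplus_{k\geq 1}\mb F_4[a_1^{\pm1}]$ sitting in filtrations $2,4,6,\dots$ and generated by the image of $\tau\in H^2(\mb Z/2;\pi_0)$ --- Corollary~\ref{cor:degeneration} tells us that $a_1$ and $a_3$ act invertibly on $\tau$ with $a_3=a_1^3\zeta$, so inverting $a_1$ or $a_3$ has no effect and all three Mayer--Vietoris maps between $\tau$-parts are isomorphisms. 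Thus the Mayer--Vietoris long exact sequence for $E^{w_{15}}$ splits the answer into a copy of $(E_*)^{w_{15}}$ and a single copy of the common $\tau$-part $T$, and it remains to identify $T$.

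The crux is resolving the hidden extensions in $T$. Using $q\tau=0$, $a_3\tau=a_1^3\zeta\tau$, and $(a_6^2+a_6 q)\tau=q(\tfrac59 a_1^6-q)\tau=0$, the module $T$ is a module over $\mb Z_4[a_1^{\pm1}]:=\mb Z_2[a_1^{\pm1}][\zeta]$. By Proposition~\ref{prop:twoseries}, $0=[2]_F(\tau)=2\tau+c_2\tau^2+c_3\tau^3+\cdots$, where $c_i\in\pi_{2(i-1)}E$ are the coefficients of the $2$-series of the formal group law of $E$. Modulo $2$ the coefficient $c_2$ is, up to a unit, the Hasse invariant of this formal group; its zero locus is the supersingular locus, which is the divisor $P_6=\{a_1=0\}$ (the point of CM discriminant $-3$, at which $2$ is inert), cut out to the same order as $a_1$ --- visible from the supersingular specialization $a_1\mapsto u_1u$ used above --- so $c_2\equiv(\text{unit})\,a_1\pmod 2$, a unit on $T$. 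Hence $c_2\tau^2=-2\tau-c_3\tau^3-\cdots$ can be solved for $\tau^2$, and substituting this relation into itself repeatedly (each substitution raises the power of $\tau$, hence the filtration, so the process converges) gives $\tau^2=\alpha\tau$ for a unique $\alpha\in\mb Z_4[a_1^{\pm1}]$; therefore $\tau^n=\alpha^{n-1}\tau$ for all $n$ and $T$ is the cyclic module $\mb Z_4[a_1^{\pm1}]/J\cdot\tau$. Its associated graded, read off the $E_\infty$-page, is a copy of $\mb F_4[a_1^{\pm1}]$ in every filtration $2k$, $k\geq 1$; since a homogeneous ideal of $\mb Z_4[a_1^{\pm1}]$ is $0$ or $2^m\mb Z_4[a_1^{\pm1}]$ and only $J=0$ produces an associated graded that does not terminate after finitely many steps, we get $J=0$ and $T=\mb Z_4[a_1^{\pm1}]\tau$ (so $\tau$ has infinite additive order). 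Comparing leading terms in the relation above, $\alpha\equiv -2c_2^{-1}\equiv 2a_1^{-1}\pmod 4$ (using $c_2^{-1}\equiv(\text{unit})a_1^{-1}\bmod 2$ and $-2\cdot(\text{odd})\equiv 2\bmod 4$).

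Putting this together yields $\pi_*E^{w_{15}}\cong (E_*)^{w_{15}}\oplus \mb Z_4[a_1^{\pm1}]\tau$ --- the group extension splits because $(E_*)^{w_{15}}$, being a submodule of the $\mb Z_2$-free module $E_*$, is itself $\mb Z_2$-free, and it is realized as a subring through the edge homomorphism, there being no differentials into filtration $0$ --- together with $\tau^2=\alpha\tau$ and $\alpha\equiv 2a_1^{-1}\pmod 4$, as claimed. The main obstacle is the hidden-extension step of the third paragraph, and everything there hinges on identifying the supersingular locus with the divisor $P_6=\{a_1=0\}$: once one knows that the $x^2$-coefficient of the $2$-series reduces mod $2$ to a unit times $a_1$, the single relation $[2]_F(\tau)=0$ simultaneously collapses the tower of powers of $\tau$ into a free cyclic $\mb Z_4[a_1^{\pm1}]$-module and pins down $\alpha$ modulo $4$.
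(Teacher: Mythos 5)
Your argument is essentially the paper's: both hinge on the degeneration of the fixed-point spectral sequences for the localized corners, the relation $[2](\tau)=0$ from Proposition~\ref{prop:twoseries}, the invertibility of $a_1$ on $\tau$ from Corollary~\ref{cor:degeneration}, Weierstrass preparation (your iterated substitution) to extract $\tau^2=\alpha\tau$ with $\alpha\equiv 2a_1^{-1}$ mod $4$, and the cube-root-of-unity action of $a_3a_1^{-3}$ to produce the $\mb Z_4$-coefficients; you simply spell out the Mayer--Vietoris reassembly and the vanishing of the ideal $J$ in more detail than the paper does. One side remark is false: the zero locus of $q=a_1^6+a_1^3a_3+a_3^2$ modulo $2$ is not $\{a_1=0\}$ (at $a_1=0$ one has $q\equiv a_3^2\neq 0$), but this is harmless since the only fact you need there --- that $q$ is a nonzerodivisor on $B/2$ --- holds because $B/2$ is a domain; the genuinely relevant divisor identification is the later one, $\divi{v_1 \bmod 2}=P_6=\divi{a_1 \bmod 2}$, which you state correctly.
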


\begin{proof}
By Corollary~\ref{cor:degeneration}, the action of $a_1$ on $\tau$ is
invertible, and by Proposition~\ref{prop:twoseries} we know 
$\tau$ is annihilated by the two-series.  We have
\[
[2](\tau) = 2\tau(1 + \tau^2 f_0(\tau^2)) + a_1 \tau^2(1 + \tau^2 f_1(\tau^2))
\]
for some power series $f_0$ and $f_1$, and this implies
\[
\tau^2 = a_1^{-1}(-2\tau)(1 + \tau^2 g(\tau^2))
\]
for some power series $g$.  Applying the Weierstrass preparation
theorem to $g$, we find that $\tau^2 = \alpha \tau$ for some $\alpha$
congruent to $2a_1^{-1}$ mod $4$.

Mod $2$, the element $a_3/a_1^3$ acts as a third root of unity on
$\tau$.  By Hensel's lemma, there is a $2$-adically convergent power
series which is a unit multiple of $a_3/a_1^3$, acting as a third root
of unity on $\tau$. This determines the action of the entire ring
$(a_1^{-1}E_*)^{w_{15}}$: in particular, it factors through ${\cal
  O}_K[a_1^{\pm 1}](\omega) \cong ({\cal O}_K[a_1^{\pm 1}])^2$.
\end{proof}

We now consider the homotopy fixed point spectral sequence for the
action of $\mb Z/8$ on $E^{w_{15}}$.  We have a comparison with the
homotopy fixed point spectral sequence for $E$, and we will now show
that $\tau$ must support some nontrivial differential.

\begin{prop}
\label{prop:notau}
The image of the class $\tau$ in the homotopy of $E^{w_{15}}$ does not
lift to the homotopy of $(\TAF^D)^{w_{15}}$, or its
localizations by $a_1$, $a_3$, or $(a_1 a_3)$.
\end{prop}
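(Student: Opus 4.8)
The plan is to compute the homotopy fixed point spectral sequence for $(\TAF^D)^{w_{15}} = (E^{w_{15}})^{h\mb Z/2}$, the acting group being the level structure automorphism of $\mathcal Y$, and to show that the class $\tau$ of Corollary~\ref{cor:degeneration} is not a permanent cycle in it; the argument will go through verbatim after inverting $a_1$, $a_3$, or $a_1a_3$. First I would place $\tau$ on the $E_2 = H^*(\mb Z/2; \pi_* E^{w_{15}})$-page. By Corollary~\ref{cor:coefficientring} the level structure sends $a_1 \mapsto -a_1$ and $a_3 \mapsto -a_3$ and commutes with $w_{15}$, so it acts on the summand $\mb Z_4[a_1^{\pm 1}]\tau$ of $\pi_* E^{w_{15}}$, fixing the cube root of unity there (a power series in the invariant element $a_3/a_1^3$); applying it to $\tau^2 = \alpha\tau$ with $\alpha \equiv 2a_1^{-1}\pmod 4$, together with $\sigma^2 = \mathrm{id}$, forces $\sigma(\tau) = \pm\tau$. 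If $\sigma(\tau) = -\tau \neq \tau$ we are already done, since the image of $\pi_*(\TAF^D)^{w_{15}} \to \pi_* E^{w_{15}}$ (evaluation at a point of $E\mb Z/2$) consists of $\mb Z/2$-invariants; so the only case to handle is $\sigma(\tau) = \tau$, in which $\tau$ detects a zero-line class $E_2^{0,-2}$ and, being unable to be a boundary, lifts precisely when it is a permanent cycle.

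The substantive step is to show it is not. Here I would use the observation from the introduction that on the $\tau$-ideal this spectral sequence reproduces two shifted copies of the homotopy fixed point spectral sequence for $KO$: the classes $a_1^{2k}\tau$ are trivial $\mb Z/2$-representations and the $a_1^{2k+1}\tau$ are sign representations, matching the representation pattern of $\pi_*KU$, with $\tau$ in the role of the degree $-2$ generator of $H^2(\mb Z/2;\pi_0 KU)$---the class that, since $\pi_{-2}KO = 0$, is killed by the unique nontrivial differential of the $KO$-spectral sequence, a $d_3$ hitting an $\eta^3$-multiple. I would import this $d_3$ by naturality along $\mb S \to e^{w_{15}} \to E^{w_{15}}$, combined with the Leibniz rule: feeding the relations $\tau^2 = \alpha\tau$ (whose right side is $2a_1^{-1}\tau$ modulo higher filtration) and $[2](\tau) = 0$ of Proposition~\ref{prop:twoseries}, together with the differential $d_3(\zeta^2) = \zeta^5 a_1$ established in the connective computation, into the spectral sequence forces $d_3(\tau)\neq 0$, landing in the $\eta$-multiple of $\zeta^2(a_1\tau)$ in $E_3^{3,0}$. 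Because $a_1$, $a_3$, $\zeta^2$, $\eta$ and the relation $\tau^2 = \alpha\tau$ all persist upon inverting $a_1$, $a_3$, or $a_1a_3$, the same differential appears there, so $\tau$ fails to lift in each of those localizations as well.

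The genuine obstacle, I expect, is producing the differential off $\tau$ rather than the bookkeeping around it: $\tau$ is already a permanent cycle in the evenly graded homotopy fixed point spectral sequence for $e^{w_{15}}$, so this differential is created only on passage to the further $\mb Z/2$-fixed points, and it must be extracted from the $KO$-comparison together with the multiplicative relations---in the same spirit in which the $d_3$'s of the earlier sections were forced from $d_3(\zeta^2) = \zeta^5 a_1$ and $f(a_6) = 0$. Once $d_3(\tau)\neq 0$ is in hand, the non-liftability of $\tau$, and of its three localizations, is immediate.
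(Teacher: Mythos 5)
Your framing is legitimate as far as it goes --- writing $(\TAF^D)^{w_{15}} = (E^{w_{15}})^{h\mb Z/2}$ for the level-structure action, $\tau$ sits on the zero line and lifts if and only if it is an invariant permanent cycle --- but the mechanism you propose for killing it does not close up. Naturality along $\mb S \to e^{w_{15}} \to E^{w_{15}}$ only pushes differentials \emph{forward} from the sphere, and on the zero line $\tau$ is not in the image of $\pi_{-2}\mb S = 0$, so no differential can be imported onto it this way. The Leibniz rule applied to $\tau^2 = \alpha\tau$ is vacuous: $d_3(\tau^2) = 2\tau\,d_3(\tau) = 0$ because positive-filtration classes are $2$-torsion, and $\alpha \equiv 2a_1^{-1}$ makes the right-hand side vanish identically as well, so the relation reads $0=0$. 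The $KO$-pattern is indeed the correct answer (compare the final theorem of the paper), but an analogy is not a map of spectral sequences, and you have not constructed one. There is also a degree slip: the $\eta$-multiple of $\zeta^2(a_1\tau)$ lies in stem $-1$, not in $E_3^{3,0}$; the actual target of $d_3(\tau)$ is $\eta^3 a_1^{-2}\tau$, detected by $\zeta^3 a_1\tau$. Most importantly, your route amounts to proving the \emph{subsequent} proposition ($d_3(\tau)=\eta^3 a_1^{-2}\tau$) first, whereas in the paper that differential is deduced \emph{from} the present non-lifting statement; without an independent source for the differential the argument is circular.

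The missing idea is to use the other composite. Present $(\TAF^D)^{w_{15}}$ as the $w_{15}$-homotopy fixed points of $\TAF^D$ and compare the $w_{15}$-homotopy fixed point spectral sequences along $\mb S \to \TAF^D \to E$ (trivial action on $\mb S$). There $\tau$ is the filtration-$2$ generator of $H^2(\mb Z/2;\pi_0)$ in all three spectral sequences, hence visibly in the image from the sphere; in the spectral sequence for $\mb S$, which computes the stable cohomotopy of $\mb{RP}^\infty$, the cell structure gives $d_2(\tau) = \eta\tau^2$, and because $\mb S \to \TAF^D$ is a split inclusion on homotopy in degrees $-7$ through $2$, the target remains nonzero and the $d_2$ persists for $\TAF^D$. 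One must then still exclude a lift detected in filtration $\le 1$; the only candidate there is $\zeta^{14}a_6$, which is annihilated by inverting $a_1$ while $a_1$ acts invertibly on $\tau$, which also handles the localized statements. The degree-range splitting of the sphere off $\TAF^D$ is the non-formal input for which your proposal has no substitute.
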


\begin{proof}
To show that $\tau$ does not lift to the homotopy fixed points of $\mb
Z/8 \times \mb Z/2$ on $E$, we consider the $\mb Z/2$-equivariant maps
$\mb S \to \TAF^D \to E$, where the Atkin-Lehner involution acts
on both $\TAF^D$ and $E$.  The class $\tau$ appears as the
unique generating class in filtration $2$ and homotopy degree $-2$ in
all three homotopy fixed point spectral sequences, and in the homotopy
fixed point spectral sequence for $\TAF^D$ the only nonzero class in
higher filtration is $\zeta^{14} b_6$.

The homotopy fixed point spectral sequence for $\mb S$ calculates the
stable cohomotopy groups of $\mb{RP}^\infty$, and in this spectral
sequence the class $\tau$ supports a $d_2$ differential $d_2(\tau) =
\eta \tau^2$ due to the attaching maps for the cells constructing
$\mb{RP}^\infty$.  As the map $\mb S \to \TAF^D$ is an inclusion
of a summand in homotopy degrees $-7$ through $2$, the class $\tau$
also supports a $d_2$ differential in the homotopy fixed point
spectral sequence for $(\TAF^D)^{w_{15}}$.

The only class in higher filtration whose image could possibly be
$\tau$ is $\zeta^{14} b_6$.  However, this class is annihihilated by
inverting $a_1$, which acts invertibly on $\tau$.
\end{proof}

\begin{prop}
In the homotopy fixed point spectral sequence for the action of $\mb
Z/8$ on $E^{w_{15}}$, we have a $d_3$-differential $d_3(\tau) =
\eta^3 b_1^{-2} \tau$.  All remaining differentials are determined
by their image in the homotopy fixed point spectral sequence for
$E$.
\end{prop}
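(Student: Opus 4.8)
The plan is to locate $\tau$ inside a copy of the homotopy fixed point spectral sequence for $KO = KU^{h\mb Z/2}$ sitting as a summand of the $E_2$-page, read off the $d_3$ from there, and obtain all other differentials by comparison along the $\mb Z/2$-equivariant ring map $E^{w_{15}}\to E$.

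First I would record the shape of the $E_2$-page. The isomorphism $\pi_* E^{w_{15}}\cong (E_*)^{w_{15}}\oplus\mb Z_4[a_1^{\pm1}]\tau$ of the preceding corollary is a splitting of $\mb Z/2$-modules, since the $\mb Z/2$-action (which negates $a_1$ and $a_3$ and fixes $a_6$ and $\tau$) preserves each summand. Hence the $E_2$-page of the homotopy fixed point spectral sequence breaks up as the sum of $H^*(\mb Z/2;(E_*)^{w_{15}})$, which maps to the $E_2$-page of the spectral sequence computing $\TAF^D$, and the ``$\tau$-summand'' $H^*(\mb Z/2;\mb Z_4[a_1^{\pm1}]\tau)$. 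In the latter the action fixes $\tau$ and negates $a_1$, so with $a_1$ in the role of the Bott element $\beta$ this is exactly the $E_2$-page of the homotopy fixed point spectral sequence for $KU^{h\mb Z/2}$, with coefficients extended to $\mb Z_4$ and translated by $\tau$. Since $\pi_* E^{w_{15}}$ is concentrated in even degrees, both pieces are sparse, and a bidegree count shows the only possible differentials have length $\equiv 3\bmod 4$.

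Next I would pin down $d_3(\tau)$. We already know (Proposition~\ref{prop:notau}) that the filtration-zero class $\tau$ cannot be a permanent cycle, so it supports a nonzero $d_r$; sparseness forces $r\equiv 3\bmod 4$, and a bidegree count places $d_3(\tau)$ in filtration $3$ and total degree $-3$, which in the $\tau$-summand is precisely the slot occupied by $\eta^3 a_1^{-2}\tau$. To evaluate $d_3(\tau)$ I would transport the unique differential $d_3(x)=\eta^3\beta^{-2}x$ of the $KU^{h\mb Z/2}$-spectral sequence along the identification above: concretely, realize the $\tau$-summand as the homotopy of a $\mb Z/2$-equivariant $E^{w_{15}}$-module --- after inverting $a_1$ it is a direct summand, and it is the kernel ideal of the $\mb Z/2$-equivariant ring map $E^{w_{15}}\to E$ --- which is equivalent to a $KU$-module carrying the complex-conjugation action, with $a_1$ mapping to $\beta$. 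Under this the $KO$-differential becomes $d_3(\tau)=\eta^3 a_1^{-2}\tau$; in particular it is nonzero and has no component in the $H^*(\mb Z/2;(E_*)^{w_{15}})$-summand. Establishing this spectrum-level comparison is, I expect, the main obstacle: the module-level identification of the $\tau$-summand is immediate from the corollary, but upgrading it to something that controls differentials requires producing the equivariant $KU$-module structure. (One might instead try to force $d_3(\tau)\neq 0$ from the relations $[2](\tau)=0$, $\tau^2=\alpha\tau$ and the cup-one identity $d_3(x^2)=\eta(\zeta x)^2$, but these turn out to be consistent with $d_3(\tau)=0$ once one uses $2\eta=0$ and that $\alpha$ is $2$ times a unit, so genuinely topological input is needed.)

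Finally I would argue that all remaining differentials are inherited from the spectral sequence for $E$. The ring map $E^{w_{15}}\to E$ is $\mb Z/2$-equivariant; on homotopy it carries the $(E_*)^{w_{15}}$-summand isomorphically onto the ring of $w_{15}$-invariants in $E_*$ and kills $\tau$, since the image of $\tau$ would be a nonzero $2$-torsion point of the formal group of $E$ defined over $E_0$, and none exists. Thus on $E_2$-pages it kills the $\tau$-summand and restricts, on the other summand, to $H^*(\mb Z/2;(E_*)^{w_{15}})\to H^*(\mb Z/2;E_*)$, which one checks is injective in the bidegrees in play; hence every differential supported on a class of that summand is detected, and therefore determined, by its image in the spectral sequence for $E$ computed in the previous section. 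On the $\tau$-summand the Leibniz rule propagates $d_3(\tau)=\eta^3 a_1^{-2}\tau$ to every class, after which --- by the $KU^{h\mb Z/2}$-model --- the summand collapses, with no room for $d_7$ or beyond. Together these account for the entire spectral sequence.
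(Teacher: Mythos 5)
Your overall frame matches the paper's in two respects: you use Proposition~\ref{prop:notau} to force $\tau$ to die, and you handle the ``remaining differentials'' by the comparison map to the spectral sequence for $E$ (your last paragraph is in substance the paper's observation that on the $E_4$-page this map is surjective and an isomorphism above the $2$-line). The genuine gap is in the central step, the evaluation of $d_3(\tau)$. You route it through an identification of the $\tau$-summand with the homotopy fixed point spectral sequence of a $\mb Z/2$-equivariant $KU_{\mb Z_4}$-module, and you rightly flag that producing this equivariant module structure is unestablished. But the problem is worse than a missing construction: even granting such a structure, the $E_2$-page does not determine the twist. The group cohomology $H^*(\mb Z/2;\mb Z_4[a_1^{\pm 1}]\tau)$ is carried to itself by the degree-$4$ shift $a_1^2$ (i.e.\ $\beta^2$ on the $KU$ side), and the two resulting candidates --- fixed points $\Sigma^{-2}KO_{\mb Z_4}$ versus $\Sigma^{2}KO_{\mb Z_4}$ --- have identical $E_2$-pages but give opposite answers to the question at hand: in the first, $d_3(\tau)=0$, $d_3(a_1^2\tau)=\eta^3\tau$, and $\tau$ is a permanent cycle; in the second, $d_3(\tau)=\eta^3a_1^{-2}\tau$. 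So ``transporting the unique $KO$-differential along the identification'' is not well defined until the twist is resolved, and resolving it is precisely the content of the proposition. (The paper's final theorem confirms that the answer is the $\Sigma^2 KO$ pattern.)

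The paper settles this dichotomy by an elementary internal argument that your write-up circles around but never lands on: since $d_3(a_1^2)=\eta^3$ in the ambient spectral sequence, the Leibniz rule applied to $a_1^2\tau$ forces at least one of $\tau$, $a_1^2\tau$ to support a nonzero $d_3$ into the kernel of the comparison map; the fact that $a_3/a_1^3$ is a $d_3$-cycle acting on $\tau$ by a root of unity rules out both differentials occurring at once; and after $E_3$ the kernel is concentrated in filtration $\leq 2$, so no $d_r$ with $r\geq 4$ is available to $\tau$. Combined with Proposition~\ref{prop:notau} this leaves $d_3(\tau)=\eta^3a_1^{-2}\tau$ as the only possibility, with no equivariant $KU$-module needed --- note that your parenthetical dismisses an algebraic route via $[2](\tau)=0$ and the cup-one formula, but misses this one, which uses only $d_3(a_1^2)=\eta^3$. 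A smaller point: your sparseness claim that all differentials have length $\equiv 3\bmod 4$ holds only for differentials preserving the two summands (the $\tau$-summand is concentrated in $t-s\equiv s+2\bmod 4$ rather than $t-s\equiv s$), so cross-summand $d_{4k+1}$'s must be excluded by the kernel and injectivity considerations rather than by parity alone.
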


\begin{proof}
We first consider the $E_3$-page.  The elements $\tau$ and $b_1^2
\tau$ are in the kernel of the map to the $E_3$-page for $E$, and so
can only support differentials to unit multiples of $\eta^3 b_1^{-2}
\tau$ or $\eta^3 \tau$ respectively.

From the Leibniz rule either $\tau$ or $b_1^2 \tau$ must support a
nonzero differential.  As the element $b_3/b_1^3$ is a cycle, we cannot
have both of these differentials occuring.  No matter which
differential occurs, no elements in the kernel of the map of homotopy
fixed-point spectral sequences survive above the $2$-line, and so this
is the only possible differential that $\tau$ could support.
Therefore, we must have $d_3(\tau) = \eta^3 b_1^{-2} \tau$ and
$d_3(b_1^2 \tau) = 0$.

The $d_3$ differential is determined on classes which are
not multiples of $\tau$ by their image in the homotopy fixed point
spectral sequence for $E$.  On the $E_4$-page, the homotopy fixed
point spectral sequence for $E^{w_{15}}$ maps surjectively onto that
for $E$, and the map is an isomorphism above the $2$-line.  Therefore,
the remaining differentials are determined by their image in the
homotopy fixed point spectral sequence for $E$.
\end{proof}

As a consequence of this description of the spectral sequence, we have
the following.
\begin{thm}
Under the map
\[
\pi_* (\TAF^D)^{w_{15}} \to \pi_* \TAF^D,
\]
the kernel is isomorphic to $\pi_*(\Sigma^2  KO) \otimes W(\mb F_4)$, and
the image in the homotopy of $(\TAF^D)^{w_{15}}$ consists of those
classes in the homotopy of $\TAF^D$ which are not multiples of $b_6$.
\end{thm}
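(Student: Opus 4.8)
The plan is to extract the conclusion from the structure of the homotopy fixed point spectral sequence for $E^{w_{15}}$ that was just determined, passing through $e^{w_{15}}$ where convenient. The map $\pi_* (\TAF^D)^{w_{15}} \to \pi_* \TAF^D$ is induced by the inclusion $\mb Z/2 \times 1 \hookrightarrow \mb Z/2 \times \mb Z/2$ of the first factor into the square, applied to $E^{h(-)}$; so on the level of homotopy fixed point spectral sequences it is the edge map sending the $E_2$-term $H^p(\mb Z/2; H^q(\mb Z/2; \pi_* E))$ for the outer Atkin-Lehner $\mb Z/2$ to $H^q(\mb Z/2; \pi_* E)$. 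The previous proposition tells us the only differential killing $\mb Z/2$-invariant-but-not-Atkin-Lehner-invariant input is $d_3(\tau) = \eta^3 a_1^{-2}\tau$, together with the fact that on the $E_4$-page and above the spectral sequence for $E^{w_{15}}$ surjects onto that for $E$ and is an isomorphism above the $2$-line.

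First I would identify the kernel. The classes in $\pi_* E^{w_{15}}$ not coming from $(\TAF^D)^{w_{15}}$-after-including-to-$\TAF^D$ are exactly those built from the $\tau$-summand $\mb Z_4[a_1^{\pm 1}]\cdot\tau$, which maps to zero in $E$ since $\tau$ does not survive even to the $E_3$-page of the spectral sequence computing $\pi_* \TAF^D$ (it supports $d_3$). What survives $d_3(\tau) = \eta^3 a_1^{-2}\tau$ on the $\tau$-line is the quotient of $\mb Z_4[a_1^{\pm 1}]\cdot\tau$ (together with its $\zeta$- and $\eta$-multiples in the homotopy fixed point spectral sequence for the outer $\mb Z/2$) by the image and kernel of multiplication by $\eta^3 a_1^{-2}$; since $2\tau = 0$ and the relation $[2](\tau) = 0$ forces a $KO$-pattern, this $\tau$-tower is precisely the $\eta$-periodic pattern of $\pi_* KO$, shifted into homotopy degree $-2$ and $a_1$-periodic. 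Tensoring with the Witt ring $\mb Z_4$ (which appeared because of the third-root-of-unity action of $a_3/a_1^3$ on $\tau$), the kernel is $\pi_*(\Sigma^2 KO)\otimes\mb Z_4$.

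Next I would identify the image. By the proposition, on the $E_4$-page and above the homotopy fixed point spectral sequence for $E^{w_{15}}$ maps \emph{onto} that for $E$, and isomorphically above the $2$-line; below the $2$-line (filtration $0$ and $1$) the $E_\infty$-term for $\TAF^D$ is the ring $R'$ from the previous section, whose underlying $E_2$-term on the zero line is generated by $a_1$, $a_3$, and $a_6$. The only generator that is \emph{not} fixed by $w_{15}$ is $a_6$ (by Proposition~\ref{prop:w15-action}, $w_{15}$ sends $a_6$ to $-a_6 - (a_1^6 + a_1^3 a_3 + a_3^2)$, so $a_6$ is not in the image of restriction from the invariant subring), while $a_1^2$, $a_1 a_3$, $a_3^2$ all are fixed, as is $a_6^2 + a_6(a_1^6+a_1^3a_3+a_3^2)$ (which equals $-(a_1^6+a_1^3a_3+a_3^2)^2 + \tfrac{5}{9}a_1^6(\cdots)$ by $f(a_6) = 0$, hence is a polynomial in the fixed generators). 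Therefore the image of $\pi_*(\TAF^D)^{w_{15}} \to \pi_*\TAF^D$ consists precisely of the subring of classes that are not $a_6$-multiples: the surjectivity statement above the $2$-line handles the higher filtration classes (all of which, being $\nu$- and $\zeta$-towers, lift), and the explicit description of $R'$ together with the $w_{15}$-action computation handles filtrations $0$ and $1$.

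The main obstacle I anticipate is the bookkeeping at the borderline between filtration $2$ and below: one must be careful that the short exact sequence $0 \to \pi_*(\Sigma^2 KO)\otimes\mb Z_4 \to \pi_*(\TAF^D)^{w_{15}} \to (\text{non-}a_6\text{ part of }\pi_*\TAF^D) \to 0$ is exact on the nose and not merely up to extension ambiguity, since $\tau$-multiples of $a_6$ (e.g.\ $\zeta^{14}a_6$, the one class in low filtration flagged in Proposition~\ref{prop:notau}) live close to the image of the $\tau$-tower and could a priori contaminate it; the argument there is exactly the one already given — such classes die after inverting $a_1$, on which $\tau$ acts invertibly — so I would cite Proposition~\ref{prop:notau} rather than redo it. Checking that the $d_3$-differential $d_3(\tau) = \eta^3 a_1^{-2}\tau$ together with the higher differentials inherited from $E$ really does collapse the $\tau$-summand to a $\Sigma^2 KO$-pattern (and not something larger) is the one genuinely new computation, and it is short because the $2$-series relation $[2](\tau)=0$ pins it down.
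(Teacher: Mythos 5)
Your overall strategy matches the paper's: the theorem is stated there with no separate proof, as a direct consequence of the preceding propositions, and you assemble exactly those ingredients --- the splitting $\pi_* E^{w_{15}} \cong (E_*)^{w_{15}} \oplus \mb Z_4[a_1^{\pm 1}]\cdot\tau$, the differential $d_3(\tau) = \eta^3 a_1^{-2}\tau$ collapsing the $\tau$-summand to a shifted $KO$-pattern tensored with $\mb Z_4$, Proposition~\ref{prop:notau} for the borderline filtration issue, and Proposition~\ref{prop:w15-action} to see that $a_6$ is not Atkin--Lehner invariant. The kernel identification is argued the way the paper intends.

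However, one step in your image argument is self-contradictory as written. You assert that the higher-filtration classes of $\pi_*\TAF^D$ ``all \ldots lift,'' reserving the non-$a_6$ condition for filtrations $0$ and $1$; but $\pi_*\TAF^D$ contains $a_6$-multiples in filtration $\geq 2$ that survive to $E_\infty$ (for instance $\zeta^2 a_3^4 a_6$, $\nu(a_3^6 a_6)$, and $\nu(\zeta a_3 a_6)$ from the list~(\ref{eq:spurious-classes})), and the theorem you are proving says precisely that these are \emph{not} in the image. The correct mechanism is already visible on the $E_2$-page: the map $\pi_* E^{w_{15}} \to \pi_* E$ kills the $\tau$-summand and has image the $w_{15}$-fixed subring, which by Proposition~\ref{prop:w15-action} contains no $a_6$-multiples --- an element $p + q a_6$ is fixed only if $q\left(2a_6 + a_1^6 + a_1^3 a_3 + a_3^2\right) = 0$, forcing $q = 0$. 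Hence the induced map of homotopy fixed point spectral sequences lands in the non-$a_6$ summand of $H^*(\mb Z/2;\pi_* E)$ in \emph{every} filtration, not just filtrations $0$ and $1$; the surjectivity and isomorphism statements above the $2$-line should not be read as saying that the $a_6$-towers lift. With that correction, together with a check that no filtration jumping occurs (which follows from injectivity of the map on the non-$\tau$ part of each $E_\infty^{s,*}$) so that no $\tau$-class can map to a nonzero higher-filtration $a_6$-class, your argument yields the stated image and the exactness of the resulting sequence.
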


\bibliography{masterbib}

\providecommand{\bysame}{\leavevmode\hbox to3em{\hrulefill}\thinspace}
\providecommand{\MR}{\relax\ifhmode\unskip\space\fi MR }
\providecommand{\MRhref}[2]{%
  \href{http://www.ams.org/mathscinet-getitem?mr=#1}{#2}
}
\providecommand{\href}[2]{#2}
\begin{thebibliography}{KRY06}

\bibitem[AB04]{alsina-bayer-shimuracurves}
Montserrat Alsina and Pilar Bayer, \emph{Quaternion orders, quadratic forms,
  and {S}himura curves}, CRM Monograph Series, vol.~22, American Mathematical
  Society, Providence, RI, 2004. \MR{2038122 (2005k:11226)}

\bibitem[BL10]{taf}
Mark Behrens and Tyler Lawson, \emph{Topological automorphic forms}, Mem. Amer.
  Math. Soc. \textbf{204} (2010), no.~958, xxiv+141. \MR{2640996}

\bibitem[Eic38]{eichler-idealclasses}
M.~Eichler, \emph{\"{U}ber die {I}dealklassenzahl hyperkomplexer {S}ysteme},
  Math. Z. \textbf{43} (1938), no.~1, 481--494.

\bibitem[Elk98]{elkies-computations}
Noam~D. Elkies, \emph{Shimura curve computations}, Algorithmic number theory
  ({P}ortland, {OR}, 1998), Lecture Notes in Comput. Sci., vol. 1423, Springer,
  Berlin, 1998, pp.~1--47. \MR{1726059 (2001a:11099)}

\bibitem[Goe09]{goerss-landweber-families}
Paul~G. Goerss, \emph{Realizing families of {L}andweber exact homology
  theories}, New topological contexts for {G}alois theory and algebraic
  geometry ({BIRS} 2008), Geom. Topol. Monogr., vol.~16, Geom. Topol. Publ.,
  Coventry, 2009, pp.~49--78. \MR{2544386 (2010h:55007)}

\bibitem[HL10]{shimc}
Michael Hill and Tyler Lawson, \emph{Automorphic forms and cohomology theories
  on {S}himura curves of small discriminant}, Adv. Math. \textbf{225} (2010),
  no.~2, 1013--1045. \MR{2671186}

\bibitem[KRY06]{kudla-rapoport-yang}
Stephen~S. Kudla, Michael Rapoport, and Tonghai Yang, \emph{Modular forms and
  special cycles on {S}himura curves}, Annals of Mathematics Studies, vol. 161,
  Princeton University Press, Princeton, NJ, 2006.

\bibitem[Kur79]{kurihara-equations}
Akira Kurihara, \emph{On some examples of equations defining {S}himura curves
  and the {M}umford uniformization}, J. Fac. Sci. Univ. Tokyo Sect. IA Math.
  \textbf{25} (1979), no.~3, 277--300. \MR{523989 (80e:14010)}

\bibitem[Lan76]{landweber-exact}
Peter~S. Landweber, \emph{Homological properties of comodules over {$M{\rm
  U}\sb\ast (M{\rm U})$}\ and {BP{$\sb\ast $}}({BP})}, Amer. J. Math.
  \textbf{98} (1976), no.~3, 591--610.

\bibitem[Mic81]{michon-shimuracurves}
Jean-Francis Michon, \emph{Courbes de {S}himura hyperelliptiques}, Bull. Soc.
  Math. France \textbf{109} (1981), no.~2, 217--225. \MR{623790 (83k:14034)}

\bibitem[Miy89]{miyake}
Toshitsune Miyake, \emph{Modular forms}, Springer-Verlag, Berlin, 1989,
  Translated from the Japanese by Yoshitaka Maeda.

\bibitem[Mor81]{morita-shimuracurvereduction}
Yasuo Morita, \emph{Reduction modulo {${P}$} of {S}himura curves}, Hokkaido
  Math. J. \textbf{10} (1981), no.~2, 209--238.

\bibitem[MR09]{mahowald-rezk-level3}
Mark Mahowald and Charles Rezk, \emph{Topological modular forms of level 3},
  Pure Appl. Math. Q. \textbf{5} (2009), no.~2, Special Issue: In honor of
  Friedrich Hirzebruch. Part 1, 853--872. \MR{2508904 (2010g:55010)}

\bibitem[Qui69]{quillen-fgl}
Daniel Quillen, \emph{On the formal group laws of unoriented and complex
  cobordism theory}, Bull. Amer. Math. Soc. \textbf{75} (1969), 1293--1298.

\bibitem[Vig80]{vigneras-quaternions}
Marie-France Vign{\'e}ras, \emph{Arithm\'etique des alg\`ebres de quaternions},
  Lecture Notes in Mathematics, vol. 800, Springer, Berlin, 1980. \MR{580949
  (82i:12016)}

\end{thebibliography}

\end{document}